\newtheorem{theorem}{Theorem}[section]
\newtheorem{lemma}[theorem]{Lemma}
\newtheorem{corollary}[theorem]{Corollary}
\newtheorem{proposition}[theorem]{Proposition}
\theoremstyle{definition}
\newtheorem{definition}[theorem]{Definition}
\newtheorem{example}[theorem]{Example}
\theoremstyle{remark}
\newtheorem{remark}[theorem]{Remark}
\numberwithin{equation}{section}
\begin{document}
\tikzset{->-/.style={decoration={
  markings,
  mark=at position #1 with {\arrow{>}}},postaction={decorate}}}

\tikzset{-<-/.style={decoration={
  markings,
  mark=at position #1 with {\arrow{<}}},postaction={decorate}}}

\title[A foliated Hitchin-Kobayashi correspondence]{A foliated Hitchin-Kobayashi correspondence}

\author{David Baraglia}

\address{School of Mathematical Sciences, The University of Adelaide, Adelaide SA 5005, Australia}
\email{david.baraglia@adelaide.edu.au}

\author{Pedram Hekmati}

\address{Department of Mathematics, The University of Auckland, Auckland, 1010, New Zealand}
\email{p.hekmati@auckland.ac.nz}

\begin{abstract}
We prove an analogue of the Hitchin-Kobayashi correspondence for compact, oriented, taut Riemannian foliated manifolds with transverse Hermitian structure. In particular, our Hitchin-Kobayashi theorem holds on any compact Sasakian manifold. We define the notion of stability for foliated Hermitian vector bundles with transverse holomorphic structure and prove that such bundles admit a basic Hermitian-Einstein connection if and only if they are polystable. Our proof is obtained by adapting the proof by Uhlenbeck and Yau to the foliated setting. We relate the transverse Hermitian-Einstein equations to higher dimensional instanton equations and in particular we look at the relation to higher contact instantons on Sasaki manifolds. For foliations of complex codimension $1$, we obtain a transverse Narasimhan-Seshadri theorem. We also demonstrate that the weak Uhlenbeck compactness theorem fails in general for basic connections on a foliated bundle. This shows that not every result in gauge theory carries over to the foliated setting.
\end{abstract}

\thanks{This work is supported by the Australian Research Council Discovery Projects DP110103745, DP170101054 and the Royal Society of New Zealand Marsden Fund Grant 17-UOA-061.}


\date{\today}


\maketitle



\section{Introduction}

In this paper we prove an analogue of the Hitchin-Kobayashi correspondence for compact, taut Riemannian foliated manifolds with transverse Hermitian structure. The two sides of the correspondence, polystable holomorphic bundles and Hermitian-Einstein connections, are replaced by their foliated analogues. Recall that one motivation for the Hitchin-Kobayashi correspondence comes from studying moduli spaces of the {\em anti-self-dual instanton} equations on a $4$-manifold $X$:
\begin{equation}\label{equ:inst1}
*F_A = -F_A,
\end{equation}
where $F_A$ is the curvature of a unitary connection $A$ on a vector bundle $E \to X$. In general it is difficult to understand the topology of the moduli space of instantons on $X$. However if $X$ is a Hermitian $4$-manifold, one observes that the anti-self-duality equations are equivalent to the equations:
\begin{equation*}
F_A^{0,2} = 0 \quad \quad \Lambda F_A = 0,
\end{equation*}
where $\Lambda$ is the adjoint of the Lefschetz operator. These are (a special case of) the Hermitian-Einstein equations for a unitary connection $A$ on a holomorphic vector bundle. Note that by the first equation $A$ determines a holomorphic structure on $E$. Assume that the Hermitian metric on the $4$-manifold $X$ is Gauduchon, i.e. $\overline{\partial} \partial \omega = 0$. Then the Hitchin-Kobayashi correspondence gives necessary and sufficient conditions for a holomorphic vector bundle $E$ to admit a Hermitian-Einstein connection, namely $E$ must be polystable. Consequently, the moduli space of instantons on $X$ of given rank can be identified with the moduli space of polystable holomorphic vector bundles of degree zero and corresponding rank. The advantage of this is that it allows one to study the moduli space of instantons on $X$ using complex analytic tools.\\

Similarly one can study the Hermitian-Einstein equations in higher dimensions. Suppose $X$ is a Hermitian manifold of real dimension $2n$. We assume the metric on $X$ is Gauduchon, which means that $\overline{\partial} \partial (\omega^{n-1}) = 0$. Let $E$ be a unitary vector bundle on $X$ and $A$ a unitary connection. The Hermitian-Einstein equations with Einstein factor $\gamma_A$ are:
\[
F_A^{0,2} = 0 \quad \quad i \Lambda F_A = \gamma_A Id_E.
\]
For $2n > 4$ these equations have an interpretation as a higher-dimensional analogue of the instanton equation (\ref{equ:inst1}), at least when $E$ has trivial determinant and $\gamma_A = 0$. Namely, they are solutions of the {\em $\Omega$-instanton equations} \cite{cdfn,dt,tian}:
\[
*F_A = -\Omega \wedge F_A, \quad \quad \text{where } \Omega = \frac{ \omega^{n-2} }{(n-2)!}.
\]

Our original motivation for studying a foliated version of the Hitchin-Kobayashi correspondence arose from our study of {\em contact instantons} \cite{bh,kaza}, which are an analogue of the anti-self-dual instanton equations for $5$-dimensional contact manifolds. Let $X$ be a contact $5$-manifold with contact $1$-form $\eta$. The anti-self-dual contact instanton equations are:
\begin{equation}\label{equ:ci1}
*F_A = - \eta \wedge F_A.
\end{equation}
Notice that these are $\Omega$-instantons for $\Omega = \eta$. In \cite{bh}, we studied the moduli space of contact instantons for compact $K$-contact $5$-manifolds. Let $\xi$ denote the Reeb vector field of the contact manifold $X$. Then $\xi$ generates a $1$-dimension foliation of $X$. If $X$ is $K$-contact, then this is a taut Riemannian foliation and the contact instanton equations (\ref{equ:ci1}) can be interpreted as saying that $A$ is a basic connection with respect to the foliation and which is anti-self-dual in the directions transverse to the foliation.\\

Suppose now that the contact $5$-manifold $X$ is Sasakian (the definition is recalled in Section \ref{sec:sasaki}). Recall that Sasakian geometry is an odd-dimensional analogue of K\"ahler geometry. In particular the geometry of $X$ transverse to the Reeb foliation is K\"ahler. In this case the contact instanton equations admit an interpretation as being connections which are Hermitian-Einstein transverse to the foliation. More generally, if one considers Sasaki manifolds of dimension $2n+1$ with $n \ge 2$ one can consider the following version of the $\Omega$-instanton equations:
\begin{equation*}
*F_A = -\Omega \wedge F_A, \quad \quad \text{where } \Omega = \eta \wedge \frac{ \; \; \; \; (d\eta)^{n-2} }{(n-2)!}.
\end{equation*}
As in the $5$-dimensional case, such connections can be interpreted as basic connections which are Hermitian-Einstein transverse to the Reeb foliation.\\

From the above considerations we are lead to consider the following very general setup: let $X$ be a compact oriented manifold with a taut Riemannian foliation which has a transverse Hermitian structure (see Section \ref{sec:tg} for definitions). In Theorem \ref{thm:trgauduchon} we prove that after rescaling by a positive smooth basic function, we can assume the transverse metric is transverse Gauduchon (see Definition \ref{def:trgauduchon}). Since $X$ is transverse Hermitian, we may speak of basic differential forms on $X$ of type $(p,q)$ and we may also define the adjoint $\Lambda$ of the Lefschetz operator, which we regard as acting on basic forms. We may then define transverse Hermitian-Einstein connections as follows:

\begin{definition}
\leavevmode
\begin{itemize}
\item[(i)]{Let $E$ be a foliated Hermitian vector bundle. A basic unitary connection $A$ on $E$ is called {\em transverse Hermitian-Einstein} if its curvature $2$-form $F_A$ is of type $(1,1)$ and satisfies
\[
i \Lambda F_A = \gamma_A id_E,
\]
for some real constant $\gamma_A$, called the {\em Einstein factor} of $A$.}
\item[(ii)]{Let $E$ be a foliated holomorphic vector bundle. A transverse Hermitian metric on $E$ is called a {\em transverse Hermitian-Einstein metric} if the associated Chern connection is transverse Hermitian-Einstein.}
\end{itemize}
\end{definition}

This is the one side of the transverse Hitchin-Kobayashi correspondence. The other side of the correspondence is the foliated analogue of polystable holomorphic vector bundles. Let $E$ be a transverse holomorphic vector bundle on $X$. The degree $\deg(E)$ of a transverse holomorphic vector bundle is defined as follows. Suppose that $E$ admits a transverse Hermitian metric. Note that a transverse Hermitian metric need not exist. Indeed the usual way that one proves the existence of a Hermitian metric on a complex vector bundle is to use a partition of unity. However this fails in the transverse setting since we usually can not find a partition of unity subordinate to a given cover by {\em basic} functions. Thus, we will only define $\deg(E)$ in the case that $E$ admits a transverse Hermitian metric $h$. Let $A$ be the associated Chern connection and $F_A$ its curvature $(1,1)$-form. We define
\begin{equation}\label{equ:defdeg}
\deg(E) = \frac{i}{2\pi} \int_X tr(F_A) \wedge \omega^{n-1} \wedge \chi,
\end{equation}
where $\chi$ is the leafwise volume form. We show that if $X$ is transverse Gauduchon and the foliation is taut, then $\deg(E)$ does not depend on the choice of transverse Hermitian metric.\\

In order to define stability/semistability we need the notion of a transverse coherent subsheaf $\mathcal{F} \subset \mathcal{O}(E)$, where $\mathcal{O}(E)$ is the sheaf of basic holomorphic sections of $E$. Transverse coherent sheaves are introduced in Definition \ref{def:tcs}. To each transverse coherent sheaf $\mathcal{F}$, one can define a determinant line bundle $\det(\mathcal{F})$ in much the same way as  done in the non-foliated setting. We have that $\det(\mathcal{F})$ is a transverse holomorphic line bundle. At this point a complication arises compared to the non-foliated setting, namely, it is by no means clear whether the line bundle $\det(\mathcal{F})$ admits a transverse Hermitian metric and so we can not define the degree of $\det(\mathcal{F})$ simply by using (\ref{equ:defdeg}). To get around this problem, we are lead to consider a foliated version of Hironaka's resolution of singularities, which we carry out in Section \ref{sec:trressing}. This is similar to the approach to stability taken in \cite{lt2}. We use this to define the degree of a transverse coherent subsheaf $\mathcal{F} \subset \mathcal{O}(E)$ of a transverse holomorphic vector bundle $E$ which admits a transverse Hermitian metric, under the further assumption that quotient sheaf $\mathcal{O}(E)/\mathcal{F}$ is torsion-free. This is sufficient to define stability/semistability:

\begin{definition}
Let $E$ be a transverse holomorphic vector bundle which admits a transverse Hermitian metric. We say that $E$ is {\em stable} (resp. {\em semistable}) if for each transverse coherent subsheaf $\mathcal{F}$ of $E$ with $0 < rk(\mathcal{F}) < rk(E)$ and such that the quotient $\mathcal{O}(E)/\mathcal{F}$ is torsion-free, we have
\[
\deg(\mathcal{F})/rk(\mathcal{F}) < \deg(E)/rk(E) \quad ({\rm resp.} \; \; \deg(\mathcal{F})/rk(\mathcal{F}) \le \deg(E)/rk(E) ).
\]
We also say that $E$ is {\em polystable} if $E$ the direct sum of stable bundles of the same slope.
\end{definition}

With these definitions at hand, we may now state our main theorem:
\begin{theorem}[The transverse Hitchin-Kobayashi correspondence]\label{thm:trhk1}
Let $E$ be a transverse holomorphic vector bundle which admits transverse Hermitian metrics. Then $E$ admits a transverse Hermitian metric $h$ which is Hermitian-Einstein if and only if $E$ is polystable. Moreover, if $E$ is simple then $h$ is unique up to constant rescaling.
\end{theorem}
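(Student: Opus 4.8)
The plan is to follow the classical Donaldson–Uhlenbeck–Yau strategy, adapted to the foliated setting, proving the two implications of the equivalence separately and then treating uniqueness.
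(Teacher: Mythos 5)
Your proposal names the correct overall strategy---the Uhlenbeck--Yau continuity method is precisely what the paper follows, with the same decomposition into the two implications (Hermitian-Einstein $\Rightarrow$ polystable, as in Theorem \ref{thm:heispolystable}; stable $\Rightarrow$ Hermitian-Einstein, via the continuity method of Section \ref{sec:trhk}) and with uniqueness handled separately through the vanishing theorem (Theorem \ref{thm:vanishing} and Proposition \ref{prop:unique}). But a one-sentence plan is not a proof: no step is carried out, and in this problem essentially all of the mathematical content lies in the foliated adaptation that the plan leaves untouched.

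The gap is that ``adapted to the foliated setting'' cannot be taken as routine, and the paper itself shows why. First, before either implication can even be \emph{stated}, one must define the degree of a transverse coherent subsheaf; the classical definition does not transfer, because basic partitions of unity do not exist, so the determinant line bundle $\det(\mathcal{F})$ need not admit any transverse Hermitian metric and the usual Chern--Weil integral is unavailable. The paper resolves this with a foliated version of Hironaka's resolution of singularities (Section \ref{sec:trressing}) and must then prove a Chern--Weil formula for this degree (Proposition \ref{prop:cw}), which is the key input when showing Hermitian-Einstein implies polystable and when showing the limiting weakly holomorphic subbundle destabilises $E$. Second, all the analysis behind the continuity method---Sobolev embedding, compactness and multiplication, elliptic regularity, and Fredholm theory for the linearised operator---must be re-established for \emph{basic} sections, where the relevant operators are only transversally elliptic, not elliptic (Section \ref{sec:sobolev}, Lemma \ref{lem:trreg}, and the theory of \cite{elk}). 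Third, one cannot presume classical gauge-theoretic facts survive the adaptation: Example \ref{ex:uhlenbeckcomp} exhibits a foliated manifold on which weak Uhlenbeck compactness \emph{fails} for basic connections, which is exactly why, for instance, Donaldson's proof of the Narasimhan--Seshadri theorem cannot be transplanted to this setting. Consequently each step---existence of the initial solution $f_1$, openness and closedness of the interval $J$, and the construction of the destabilising transverse coherent sheaf from a transverse weakly holomorphic subbundle (Theorem \ref{thm:twh}, resting on the Uhlenbeck--Yau meromorphicity theorem)---must be individually verified in the foliated category; your proposal does none of this work.
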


Our proof is based on the Uhlenbeck-Yau method of continuity proof of the Hitchin-Kobayashi correspondence given in \cite{uy} and its exposition in the book \cite{lt}. The overall strategy for proving Theorem \ref{thm:trhk1} is essentially that of Uhlenbeck-Yau. Therefore most of the work involved in the proof is in adapting each step of the proof to the foliated setting. For example, this requires the introduction of Sobolev spaces of {\em basic} sections, establishing embedding, compactness and elliptic regularity results in the basic setting. Working transverse to a foliation means that we are working with transversally elliptic operators which are not genuinely elliptic and so we also need to make use of the theory of such operators \cite{elk}.\\

At this point, the reader may have the impression that essentially any result in gauge theory can be carried over, more or less trivially, to the foliated setting. We wish to emphasise that this is not the case. In fact, as we will demonstrate in Section \ref{sec:appl}, the foliated analogue of the weak Uhlenbeck compactness theorem fails. More specifically, we give an example (Example \ref{ex:uhlenbeckcomp}) of a compact manifold $X$ with a taut Riemannian foliation which is transverse K\"ahler for which one can find sequences of basic connections $A_i$ on a foliated bundle whose curvatures are uniformly bounded (in our example the $A_i$ are flat) such that there is no weakly convergent subsequence modulo basic gauge transformations (in the $L^{p,k}$-norm for any $p,k$).\\

In Section \ref{sec:trns}, we consider the foliated Hitchin-Kobayashi correspondence in the case where the foliation has complex codimension $1$, the foliated analogue of a Riemann surface. A number of simplifications occur here, for example, in the definition of stability/semistability, it is enough to consider transverse holomorphic subbundles. So there is no need to consider transverse coherent sheaves or foliated resolutions of singularities. We refer to this special case of Theorem \ref{thm:trhk1} as the ``transverse Narasimhan-Seshadri theorem":
\begin{theorem}[Transverse Narasimhan-Seshadri theorem]
Let $X$ be a compact oriented, taut, transverse Hermitian foliated of complex codimension $n=1$. Let $E$ be a transverse holomorphic vector bundle which admits transverse Hermitian metrics. Then $E$ admits a transverse hermitian metric such that the Chern connection $A$ satisfies
\[
F_A = - 2\pi i \frac{\mu(E)}{Vol(X)} \omega \otimes Id_E,
\]
if and only if $E$ is polystable.
\end{theorem}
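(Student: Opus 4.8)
The plan is to derive this statement as the complex-codimension-one specialization of Theorem \ref{thm:trhk1}, the only genuinely new work being to verify that the abstract transverse Hermitian-Einstein condition collapses to the explicit equation in the statement, and that the two formulations of stability agree in this setting.

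First I would analyze the Hermitian-Einstein equations when the transverse complex dimension is $n=1$. The condition $F_A^{0,2}=0$ holds automatically, since there are no nonzero basic $(0,2)$-forms in transverse complex dimension $1$. Moreover any basic $(1,1)$-form is a multiple of $\omega$, so we may write $F_A = \omega \otimes f$ for an endomorphism-valued basic function $f$. Because $\Lambda \omega = n = 1$ here, the equation $i\Lambda F_A = \gamma_A\, Id_E$ becomes $if = \gamma_A\, Id_E$, forcing $f = -i\gamma_A\, Id_E$ and hence
\[
F_A = -i\gamma_A\,\omega \otimes Id_E,
\]
with $\gamma_A$ a real constant.

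Next I would pin down the Einstein factor by the standard trace-and-integrate argument. Taking the trace gives $tr(F_A) = -i\gamma_A\, rk(E)\,\omega$; wedging with $\omega^{n-1} = 1$ and $\chi$ and using the degree formula (\ref{equ:defdeg}) yields
\[
\deg(E) = \frac{i}{2\pi}\int_X tr(F_A)\wedge\chi = \frac{\gamma_A\, rk(E)}{2\pi}\int_X \omega\wedge\chi = \frac{\gamma_A\, rk(E)}{2\pi}\,Vol(X),
\]
so that $\gamma_A = 2\pi\,\mu(E)/Vol(X)$, where $\mu(E) = \deg(E)/rk(E)$. Substituting back produces exactly $F_A = -2\pi i\,(\mu(E)/Vol(X))\,\omega\otimes Id_E$. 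This establishes that the transverse Hermitian-Einstein condition is equivalent to the equation in the theorem, and with this identification the existence half of the statement is immediate from Theorem \ref{thm:trhk1}.

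The remaining point, and the one requiring the most care, is to check that polystability as it appears here---phrased in terms of transverse holomorphic subbundles---agrees with the general notion of Theorem \ref{thm:trhk1}, phrased in terms of transverse coherent subsheaves $\mathcal{F}\subset\mathcal{O}(E)$ whose quotient $\mathcal{O}(E)/\mathcal{F}$ is torsion-free. The key simplification in transverse complex dimension $1$ is that a torsion-free transverse coherent sheaf is automatically locally free, since its singular locus has transverse complex codimension at least $2$ and is therefore empty. Thus $\mathcal{O}(E)/\mathcal{F}$ is locally free and $\mathcal{F}$ is a transverse holomorphic subbundle; in particular $\det(\mathcal{F})$ admits a transverse Hermitian metric, so its degree may be computed directly from (\ref{equ:defdeg}) and coincides with the degree assigned via the foliated resolution of singularities of Section \ref{sec:trressing}. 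Hence the two stability conditions are identical, the coherent-sheaf and resolution machinery may be dispensed with entirely, and the theorem follows as the $n=1$ instance of Theorem \ref{thm:trhk1}.
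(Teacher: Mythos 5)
Your proposal is correct and follows essentially the same route as the paper: the paper also obtains this theorem as the $n=1$ specialization of Theorem \ref{thm:trhk1}, observing (Remark \ref{rem:n=1}) that the Hermitian-Einstein equation $i\Lambda F_A = \gamma_A\, Id_E$ becomes $F_A = -2\pi i\,(\mu(E)/Vol(X))\,\omega\otimes Id_E$ via Proposition \ref{prop:einsteinfactor}, and that torsion-free transverse coherent sheaves are locally free in complex codimension $1$, so sheaf-theoretic and subbundle stability coincide. Your verification of these two reductions matches the paper's argument in substance and detail.
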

When $E$ has degree $0$, this reduces to the condition that $A$ is a flat connection. In this case, the transverse Narasimhan-Seshadri correspondence can be neatly summarised as follows:

\[
\left\{ \begin{array}{c} \text{Isomorphism classes of} \\ \text{rank } m \text{ unitary} \\ \text{ representations of } \pi_1(X) \end{array} \right\} \leftrightarrow
\left\{ \begin{array}{c} \text{Isomorphism classes of polystable rank } m \\ \text{degree } 0 \text{ transverse holomorphic vector bundles} \\ \text{admitting transverse Hermitian metrics} \end{array} \right\}
\]
\\

In the case of complex codimension $1$ foliations we also prove the analogue of the Harder-Narasimhan filtration:
\begin{theorem}[Transverse Harder-Narasimhan filtration]
Let $X$ be a compact oriented, taut, transverse Hermitian foliated of complex codimension $n=1$. Let $E$ be a transverse holomorphic vector bundle which admits transverse Hermitian metrics. There exists a uniquely determined filtration of $E$
\[
0 = E_0 \subset E_1 \subset \cdots \subset E_k = E
\]
by transverse holomorphic subbundles such that the quotients $F_i = E_i/E_{i-1}$ are semistable and the slopes are strictly increasing:
\[
\mu(F_1) > \mu(F_2) > \cdots > \mu(F_k).
\]
\end{theorem}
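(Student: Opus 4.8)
The plan is to reproduce the classical construction of the Harder--Narasimhan filtration on a compact Riemann surface, feeding in the analytic input from the foliated Sobolev theory and the weakly holomorphic subbundle regularity result that already underlies the transverse Narasimhan--Seshadri theorem. Since we are in complex codimension $n=1$, I work throughout with transverse holomorphic subbundles and their slopes $\mu(F)=\deg(F)/rk(F)$, using freely that the saturation of a transverse coherent subsheaf is again a transverse holomorphic subbundle of no smaller degree (this is precisely the simplification the codimension-one hypothesis affords, removing any need for sheaves or resolutions of singularities).

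The core is the existence and uniqueness of a \emph{maximal destabilising subbundle}. Fix a transverse Hermitian metric $h$ on $E$ with Chern connection $A$. For a transverse holomorphic subbundle $F\subseteq E$ let $\pi\in \mathrm{End}(E)$ be the orthogonal projection onto $F$, so $\pi=\pi^*=\pi^2$ and $(\mathrm{Id}-\pi)\bar\partial\pi=0$. The transverse Gauss--Codazzi formula then reads
\[
\deg(F)=\frac{1}{2\pi}\int_X tr\bigl(\pi\, i\Lambda F_A\bigr)\,\omega\wedge\chi \;-\;\frac{1}{2\pi}\,\|\bar\partial\pi\|_{L^2}^2 .
\]
Because $|\pi|\le 1$ pointwise, the first term is bounded independently of $F$ by a constant times $rk(E)$, and the second term is non-positive; hence $\mu_{\max}(E):=\sup_F \mu(F)$ is finite. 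To see the supremum is attained, choose $F_i$ with $rk(F_i)$ constant (pass to a subsequence) and $\mu(F_i)\to\mu_{\max}$. The identity bounds $\|\bar\partial\pi_i\|_{L^2}$, so the projections $\pi_i$ are bounded in the basic Sobolev space $L^{2,1}$; by the foliated Rellich theorem a subsequence converges weakly in $L^{2,1}$ and strongly in $L^2$ to a weakly holomorphic projection $\pi_\infty$, which by the transverse Uhlenbeck--Yau regularity theorem defines a transverse holomorphic subbundle $E_1\subseteq E$ with $\mu(E_1)=\mu_{\max}$. A standard comparison of $F\cap F'$ and $F+F'$---using additivity of rank and superadditivity of degree under $0\to F\cap F'\to F\oplus F'\to F+F'\to 0$ (saturation only increases $\deg(F+F')$)---shows the subbundle of slope $\mu_{\max}$ of largest rank is unique. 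This $E_1$ is semistable, since any destabilising subbundle would violate maximality of $\mu_{\max}$, and one checks $\mu_{\max}(E/E_1)<\mu(E_1)$ by lifting a hypothetical violating subbundle of $E/E_1$ back to $E$.

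With the maximal destabilising subbundle in hand, the filtration is built by induction on the rank: take $E_1$ as above, apply the same construction to $E/E_1$, and pull the result back to define $E_2$, and so on. The inequality $\mu_{\max}(E/E_1)<\mu(E_1)$ guarantees the successive quotients $F_i=E_i/E_{i-1}$ have strictly decreasing slopes $\mu(F_1)>\cdots>\mu(F_k)$, and each $F_i$ is semistable by construction. For uniqueness I would prove the Hom-vanishing lemma: if $S,Q$ are semistable with $\mu(S)>\mu(Q)$ then every transverse holomorphic morphism $S\to Q$ vanishes, since a nonzero image would be a quotient of $S$ of slope $\ge\mu(S)$ and, after saturation, a subbundle of $Q$ of slope $\le\mu(Q)$. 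Given two filtrations satisfying the conclusion, this lemma forces the maximal destabilising subbundles to agree: $\mathrm{Hom}(E_1,E/E_1')=0$ because $\mu(E_1)=\mu_{\max}$ exceeds every quotient slope of $E/E_1'$, so $E_1\subseteq E_1'$, and symmetrically $E_1'\subseteq E_1$; uniqueness of the entire filtration then follows by induction on $E/E_1$.

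The main obstacle is the attainment of $\mu_{\max}$. Two analytic points carry the weight here: establishing the transverse Gauss--Codazzi identity in a form where tautness lets one discard the extra terms arising from the leaf directions, and showing that the weak $L^{2,1}$-limit $\pi_\infty$ of the minimising projections is a genuine transverse holomorphic subbundle rather than a mere $L^2$ endomorphism. This last step is exactly the regularity statement for weakly holomorphic subbundles that powers the Uhlenbeck--Yau method, and its adaptation to basic sections and transversally elliptic operators is where the real content lies; the remainder is a faithful transcription of the curve case.
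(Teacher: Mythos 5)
Your existence argument for the maximal destabilising subbundle is, in substance, the paper's own Lemma \ref{lem:max}: the same Chern--Weil identity for orthogonal projections, the same uniform $L^{2,1}_B$ bound, weak compactness, strong $L^2_B$ convergence to a transverse weakly holomorphic subbundle, and the codimension-one observation that such an object is an honest transverse holomorphic subbundle. One step you compress: that the limiting subbundle has slope exactly $\mu_{\max}$ does not follow from weak convergence alone; it needs the lower-semicontinuity comparison of the two Chern--Weil identities (weak $L^{2,1}_B$ convergence gives $\| \partial_{End(E)} \pi \|^2_{L^2} \le \liminf \| \partial_{End(E)} \pi_i \|^2_{L^2}$ while the curvature trace terms converge), which is exactly how the paper closes that lemma. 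The inductive construction of the filtration, with the maximal-slope-then-maximal-rank choice and the lifting argument for strict decrease of slopes, also matches the paper.

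Where you genuinely diverge is the Hom-vanishing lemma driving uniqueness, and this is where there is a gap. The paper proves vanishing of morphisms between semistable bundles of decreasing slope by passing to a Jordan--H\"older filtration, putting transverse Hermitian--Einstein metrics on the stable pieces via the transverse Hitchin--Kobayashi correspondence, and applying Theorem \ref{thm:vanishing} to $Hom$ bundles of negative degree. You instead run the classical sheaf-theoretic argument (image is a quotient of $S$ of slope $\ge \mu(S)$; its saturation is a subbundle of $Q$ of slope $\le \mu(Q)$), which is more elementary in that it avoids the hard direction of Hitchin--Kobayashi --- but it rests on the assertion, made twice and never proved, that \emph{saturation does not decrease degree}. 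In the foliated setting this is not a formality: degree is a real number defined by a Chern--Weil integral against $\omega^{n-1}\wedge\chi$, not a topological invariant; there is no theory of degrees of torsion sheaves supported on transverse analytic subsets; and the paper never even defines the degree of a non-saturated subsheaf, precisely because of this. The claim is true and can be patched with tools already in the paper: the inclusion of the image bundle $S/\ker\alpha$ into its saturation $G$ is a morphism of transverse holomorphic bundles of equal rank, generically an isomorphism, so taking determinants yields a nonzero basic holomorphic section of the transverse holomorphic line bundle $\det(G)\otimes\det(S/\ker\alpha)^{-1}$, which admits a transverse Hermitian metric; by the conformal-rescaling argument of Section \ref{sec:f1} (valid since the metric is Gauduchon) this line bundle admits a transverse Hermitian--Einstein metric, and Theorem \ref{thm:vanishing} then forces its degree to be nonnegative. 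The same unproved inequality also underlies your $F\cap F'$, $F+F'$ argument for uniqueness of the maximal destabilising subbundle --- a statement the paper never needs, since it deduces uniqueness of the whole filtration directly from its vanishing lemma. With the patch above your route closes and is arguably more self-contained than the paper's; as written, the uniqueness half is incomplete.
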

To prove the Harder-Narasimhan filtration, one has to show that there exists a transverse holomorphic subbundle $F \subseteq E$ which maximises $\mu(F)$. In the non-foliated setting this is easy to show using the fact that $\mu(F)$ is a rational number with denominator of absolute value at most $rk(E)$. In the foliated setting, the slopes $\mu(F)$ can be any real numbers and it is non-trivial to see that the supremum over all slopes of subbundles of $E$ is attained by a subbundle. To prove this, we make use of the notion of (a foliated analogue of) weakly holomorphic subbundles, which were introduced by Uhlenbeck and Yau in their proof of the Hitchin-Kobayashi correspondence.\\

In Section \ref{sec:sasaki} we return to our original motivation, the case where $X$ is Sasakian. We recall the definition of Sasaki manifolds and observe that since they are transverse K\"ahler, Theorem \ref{thm:trhk1} applies:
\begin{corollary}\label{cor:hesasaki1}
The transverse Hitchin-Kobayashi correspondence holds on any compact Sasaki manifold.
\end{corollary}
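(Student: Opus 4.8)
The plan is to derive the corollary directly from Theorem \ref{thm:trhk1} by checking that a compact Sasaki manifold is an instance of the general setup to which that theorem applies; there is no new analytic content, only a verification of hypotheses. Concretely, I would recall the data of a Sasaki manifold $X$ of dimension $2n+1$: a contact $1$-form $\eta$, its Reeb vector field $\xi$, the structure tensor $\phi$, and a compatible Riemannian metric $g$. The integral curves of $\xi$ integrate to the Reeb foliation $\mathcal{F}$, and this is the $1$-dimensional foliation underlying the correspondence.

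First I would verify that $\mathcal{F}$ is a taut Riemannian foliation. Since $\xi$ is a unit-length Killing field, $g$ is bundle-like with respect to $\mathcal{F}$, so $\mathcal{F}$ is Riemannian. Tautness is immediate because $\xi$ has geodesic flow: the leaves of $\mathcal{F}$ are geodesics, hence minimal $1$-dimensional submanifolds, and a foliation whose leaves are all minimal for some compatible metric is by definition taut. Orientability is clear, an orientation being given by $\eta \wedge (d\eta)^n$.

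Next I would invoke the standard fact that the geometry of $X$ transverse to $\mathcal{F}$ is K\"ahler: the transverse metric induced by $g$ together with the transverse almost complex structure induced by $\phi$ defines a transverse Hermitian structure whose fundamental form is a constant multiple of the basic closed form $d\eta$. In particular $X$ carries a transverse Hermitian structure, and the transverse K\"ahler form $\omega$ is basic and closed. Consequently $\partial \overline{\partial}(\omega^{n-1}) = 0$, so the transverse metric is already transverse Gauduchon in the sense of Definition \ref{def:trgauduchon}, and one need not even appeal to the rescaling of Theorem \ref{thm:trgauduchon}.

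With all the hypotheses of Theorem \ref{thm:trhk1} thereby verified---$X$ compact, oriented, with a taut Riemannian foliation carrying a transverse Hermitian (indeed transverse K\"ahler, hence transverse Gauduchon) structure---the transverse Hitchin-Kobayashi correspondence applies verbatim to any transverse holomorphic vector bundle on $X$ admitting transverse Hermitian metrics, which is the assertion of the corollary. I do not anticipate a genuine obstacle; the only point that calls for a little care is the identification of the transverse geometry as K\"ahler, which is precisely what simultaneously supplies the transverse Hermitian structure and the transverse Gauduchon property for free.
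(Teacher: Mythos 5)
Your proposal is correct and takes essentially the same route as the paper: both verify that the Reeb foliation of a compact Sasaki manifold is a taut Riemannian foliation (Riemannian because $\xi$ is Killing, taut because the leaves are minimal) carrying a transverse Hermitian structure whose fundamental form $\omega = d\eta$ is closed, hence transverse K\"ahler and in particular transverse Gauduchon, so that Theorem \ref{thm:trhk1} applies directly. The only cosmetic difference is that the paper deduces tautness from the contact condition $d\eta = \omega$ while you argue via the geodesic Reeb orbits; these amount to the same routine verification.
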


Of course, Theorem \ref{thm:trhk1} can be applied to any compact oriented $X$ with taut, transverse K\"ahler foliation. Further examples of such geometries include $3$-Sasaki manifolds and co-K\"ahler manifolds.\\

In Section \ref{sec:instantons}, we re-examine the relation between transverse Hermitian-Einstein connections and $\Omega$-instantons. In general, transverse Hermitian-Einstein connections with trivial determinant correspond to solutions of:
\begin{equation}\label{equ:hechi1}
*F_A = -\Omega \wedge F_A, \quad \quad \text{where } \Omega = \chi \wedge \frac{ \omega^{n-2} }{(n-2)!},
\end{equation}
where we recall that $\chi$ is the leafwise volume form. When $X$ is Sasakian of dimension $2n+1$ (with $n \ge 2$), we have $\chi = \eta$, $\omega = d\eta$ and so (\ref{equ:hechi1}) reduces to the anti-self-dual ``higher contact instanton" equations:
\[
*F_A = - \eta \wedge \frac{ \; \; \; \; (d\eta)^{n-2} }{(n-2)!} \wedge F_A.
\]
Combined with Corollary \ref{cor:hesasaki1}, we have:
\begin{corollary}
Anti-self-dual $SU(r)$ higher contact instantons on a $2n+1$-dimensional Sasakian manifold $X$ (with $n \ge 2)$ correspond to rank $r$ transverse Hermitian-Einstein connections with trivial determinant. If $X$ is compact, then by the transverse Hitchin-Kobayashi correspondence, anti-self-dual $SU(r)$ higher contact instantons on $X$ correspond to rank $r$ polystable transverse holomorphic bundles on $X$ with trivial determinant.
\end{corollary}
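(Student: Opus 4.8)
The plan is to establish the corollary in two stages: first a purely pointwise (Hodge-theoretic) identity showing that the higher contact instanton equation is equivalent to the transverse Hermitian-Einstein equations with vanishing Einstein factor, and then an appeal to Corollary \ref{cor:hesasaki1}. For the first stage I would exploit that $X$ is taut, which lets me relate the full Hodge star $*$ on the $(2n+1)$-manifold to the transverse Hodge star $*_T$ of the transverse K\"ahler structure (of complex dimension $n$) on basic forms. Concretely, for any basic $k$-form $\alpha$ one has $*\alpha = \eta \wedge *_T\alpha$. Since $F_A$ is basic and $\Omega = \eta \wedge \frac{(d\eta)^{n-2}}{(n-2)!}$ with $\omega = d\eta$, the equation $*F_A = -\Omega \wedge F_A$ becomes $\eta \wedge *_T F_A = -\eta \wedge \frac{\omega^{n-2}}{(n-2)!}\wedge F_A$, and stripping off the common factor $\eta \wedge (\,\cdot\,)$ reduces the problem to the transverse $\Omega$-instanton equation
\[
*_T F_A = -\frac{\omega^{n-2}}{(n-2)!}\wedge F_A .
\]

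The second step is the standard K\"ahler computation applied transversally. I would decompose the basic $2$-form $F_A$ into types, $F_A = F_A^{2,0} + F_A^{1,1} + F_A^{0,2}$, and split the $(1,1)$ part into its primitive part and its trace part $\lambda\omega$. Using Weil's identity for the action of $*_T$ on primitive forms, one finds that primitive $(1,1)$-forms already satisfy the transverse equation, that primitive $(2,0)$- and $(0,2)$-forms satisfy $*_T\beta = +\frac{\omega^{n-2}}{(n-2)!}\wedge\beta$, and that the trace part obeys $*_T(\lambda\omega) = \lambda\frac{\omega^{n-1}}{(n-1)!}$ whereas $\frac{\omega^{n-2}}{(n-2)!}\wedge\lambda\omega = (n-1)\lambda\frac{\omega^{n-1}}{(n-1)!}$. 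Comparing each piece against the instanton equation, and using that $L^{n-2}$ is injective on $2$-forms when $n\ge 2$, one concludes that the transverse equation holds if and only if $F_A^{2,0}=F_A^{0,2}=0$ and $\Lambda F_A = 0$, i.e.\ exactly the transverse Hermitian-Einstein equations with $\gamma_A = 0$.

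Finally, the $SU(r)$ condition means $F_A$ is $\mathfrak{su}(r)$-valued, hence trace-free, so $\det E$ is trivial as a transverse holomorphic line bundle; taking the trace of $i\Lambda F_A = \gamma_A\,id_E$ reconfirms $\gamma_A = 0$. Since $F_A^{0,2}=0$, the connection $A$ endows $E$ with a transverse holomorphic structure and is the Chern connection of a transverse Hermitian-Einstein metric. When $X$ is compact this is precisely the setting of Corollary \ref{cor:hesasaki1}, which identifies the bundles admitting such metrics with polystable transverse holomorphic bundles, the trivial-determinant constraint being preserved on both sides. The main obstacle is not conceptual but lies in the careful verification of the identity $*\alpha = \eta\wedge *_T\alpha$ on basic forms, together with the correct signs in Weil's identity in the transverse setting; once these pointwise facts are established, the corollary follows by combining the reduction above with Corollary \ref{cor:hesasaki1}.
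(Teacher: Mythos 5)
Your proposal is correct, and its skeleton coincides with the paper's: on a Sasakian manifold $\chi = \eta$ and $\omega = d\eta$, so anti-self-dual higher contact instantons are exactly the $\Omega$-instantons for $\Omega = \chi \wedge \frac{\omega^{n-2}}{(n-2)!}$; these are identified with transverse Hermitian-Einstein connections with trivial determinant; and the compact case then follows from the transverse Hitchin-Kobayashi correspondence (Corollary \ref{cor:hksasaki}). Where you genuinely diverge is the middle step. The paper delegates it entirely to Proposition \ref{prop:heinstanton}, whose proof is a citation of Tian's non-foliated computation plus the remark that it ``clearly generalises'' after wedging with $\chi$. You instead prove it from scratch: reduce the ambient Hodge star to the transverse one via $*\alpha = \eta \wedge *_T\alpha$ on basic $2$-forms (the sign is correct here because $F_A$ has even degree), strip off $\eta$ (legitimate, since wedging with $\eta$ is injective on horizontal forms), and run the transverse Lefschetz/Weil computation on the type decomposition of $F_A$. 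Your signs check out -- primitive $(1,1)$ gives $-$, the $(2,0)$ and $(0,2)$ parts give $+$, the trace part picks up the factor $n-1$, and injectivity of $L^{n-2}$ on $2$-forms for $n \ge 2$ kills the unwanted components -- so the equation is equivalent to $F_A^{0,2} = 0$, $\Lambda F_A = 0$. This buys a self-contained argument where the paper offers only a citation. Two points to tighten. First, the identity $*\alpha = \eta \wedge *_T \alpha$ is a pointwise consequence of the orthogonal splitting $TX = V \oplus H$ and $vol_X = vol_T \wedge \chi$; tautness is a global minimality condition on the leaves and is not what this identity rests on (Sasakian manifolds are taut in any case, but the attribution is off). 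Second, your assertion that the trivial-determinant constraint is ``preserved on both sides'' in the compact case hides the one nontrivial direction: if $E$ is polystable with $\det(E) = \mathcal{O}$ holomorphically trivial, one must argue that the Hermitian-Einstein connection produced by the correspondence has a covariantly constant determinant. The paper does this in the remark following Proposition \ref{prop:heinstanton}: the induced connection on $\det(E)$ is Hermitian-Einstein on the trivial, hence simple, line bundle, so by the uniqueness statement (Proposition \ref{prop:unique}) it is the trivial flat connection. Adding that sentence closes the only real gap in your write-up.
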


The following is a brief summary of each section of the paper. In Section \ref{sec:tg}, we cover the background material needed to study gauge theory transverse to a taut Riemannian foliation. In particular, we review the notions of foliated vector bundles, basic connections and basic differential operators. In Section \ref{sec:sobolev} we introduce Sobolev spaces of basic sections of a vector bundle and prove basic versions of the Sobolev embedding and compactness theorems. In Section \ref{sec:trcomplex}, we study the foliated analogues of various notions in complex geometry. In Section \ref{sec:trherm}, we prove the existence of Gauduchon metric on compact taut transverse Hermitian manifolds. In Section \ref{sec:trressing}, we prove a foliated version of resolution of singularities which we use in Section \ref{sec:stability} to define stability of transverse holomorphic vector bundles. In Section \ref{sec:trhe} we introduce the transverse Hermitian-Einstein equations and prove that transverse Hermitian-Einstein implies polystable. In Section \ref{sec:trhk}, we complete the transverse Hitchin-Kobayashi correspondence by proving the converse. Section \ref{sec:appl} is concerned with some applications and related results. In Section \ref{sec:trns} we consider the case of foliations of complex codimension $1$, which leads to a transverse Narasimhan-Seshadri theorem. We also prove a transverse version of the Harder-Narasimhan filtration and give an example to show the Uhlenbeck compactness fails in general for basic connections on a foliated bundle. In Section \ref{sec:sasaki} we recall the definition of Sasaki manifolds and consider the transverse Hitchin-Kobayashi correspondence for them. Finally, in Section \ref{sec:instantons} we relate the transverse Hermitian-Einstein equations to higher dimensional instanton equations and in particular we look at the relation to higher contact instantons on Sasaki manifolds.

\section{Transverse geometry on Riemannian foliations}\label{sec:tg}

\subsection{Riemannian foliations}\label{sec:riemfol}

Let $X$ be a smooth oriented manifold with a foliation $\mathcal F$ of dimension $m$ and codimension $2n$. We denote by $V=T\mathcal F$ the tangent distribution of the foliation and $H = TX /V$ the normal bundle. We assume that $\mathcal{F}$ is a {\em Riemannian foliation} \cite{mol} which means that $X$ is endowed with a bundle-like metric $g$, so that the foliation is locally identified with a Riemannian submersion. In other words,  $g$ yields an orthogonal splitting $TX = V \oplus H$ and induces
a holonomy invariant Riemannian structure on $H$. Let $vol_T$ and $\chi$ denote  the transverse and leafwise volume forms respectively. These are determined by the metric and the orientations on $H$ and $V$, which are chosen such that  $vol_X=vol_T\wedge \chi$.\\

By a {\em foliated chart} on $X$, we mean a coordinate chart $(U , \varphi)$, where $U \subseteq X$ is an open subset, $\varphi : U \to V \times W \subseteq \mathbb{R}^{2n} \times \mathbb{R}^m$ a diffeomorphism, where $V \subseteq \mathbb{R}^{2n}$, $W \subseteq \mathbb{R}^{m}$ are open subsets and $\varphi(\mathcal{F}|_U)$ is given by the fibres of the projection $V \times W \to V$. When working with foliated charts we will often drop explicit mention of $\varphi$ and simply identify $U$ with $V \times W$.\\

A smooth function $f$ on $X$ is called {\em basic} if $\xi(f) = 0$ for all $\xi \in \Gamma(X,V)$. More generally, a differential form $\alpha \in \Omega^k(X)$ is called  basic if $i_\xi \alpha = 0$ and $i_\xi d \alpha = 0$ for all $\xi \in \Gamma(X,V)$. The meaning of this condition is that in a local foliated chart the form depends only on the transverse variables. We let $\Omega^k_B(X)$ denote the space of basic $k$-forms and $d_B \colon \Omega^k_B(X) \to \Omega^{k+1}_B(X)$ the restriction of the exterior derivative $d$ to basic forms.\\

A foliation is said to be \emph{taut} if $X$ admits a metric such that every leaf of $\mathcal F$ is a minimal submanifold, or equivalently such that the mean curvature form   of the leaves $\kappa$   is trivial. 

\begin{proposition}[Basic Stokes' theorem] 
Let $X$ be a closed oriented manifold with a taut Riemannian foliation of codimension $2n$, then 
\begin{equation}\label{equ:basicstokes}
\int_X d_B \alpha  \wedge \chi = 0
\end{equation}
for all $\alpha \in \Omega_B^{2n-1}(X)$.
\end{proposition}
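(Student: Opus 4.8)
The plan is to reduce the basic Stokes' theorem to the ordinary Stokes' theorem on the closed manifold $X$ by multiplying the basic form $\alpha$ by the leafwise volume form $\chi$ and controlling the error terms coming from the non-closedness of $\chi$. The key identity I would aim to establish is that, modulo terms that integrate to zero or vanish under the tautness hypothesis, $d_B\alpha \wedge \chi$ differs from $d(\alpha\wedge\chi)$ by a term involving the mean curvature form $\kappa$ of the leaves. Since $\alpha$ is basic of degree $2n-1$ and $\chi$ has degree $m$, the product $\alpha\wedge\chi$ is a top-degree form on $X$ (degree $2n+m = \dim X$), so $d(\alpha\wedge\chi)=0$ automatically. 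This is where the ordinary Stokes' theorem enters trivially, and the whole content of the proposition must come from relating $d_B\alpha\wedge\chi$ to $d(\alpha\wedge\chi)$.

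First I would compute $d(\alpha\wedge\chi) = d\alpha\wedge\chi + (-1)^{2n-1}\alpha\wedge d\chi = d_B\alpha\wedge\chi - \alpha\wedge d\chi$, using that on basic forms $d$ restricts to $d_B$. The crucial structural input is Rummler's formula (or the analogous characterization of taut foliations), which expresses $d\chi$ in terms of the mean curvature form: one has $d\chi = -\kappa\wedge\chi + \varphi_0$, where $\kappa$ is the mean curvature form and $\varphi_0$ is a form whose contraction with the leafwise volume vanishes in the relevant degree (the ``$\varphi$-part'' in Rummler's decomposition). For the taut metric, $\kappa = 0$, and the term $\alpha\wedge\varphi_0$ should vanish for degree reasons: $\alpha$ already absorbs all $2n$ transverse directions, so wedging with the transverse-direction part of $d\chi$ produces a form with more than $2n$ legs in the transverse directions, hence zero.

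Putting these together, when the metric is chosen so that the foliation is taut ($\kappa=0$), I would conclude $\alpha\wedge d\chi = 0$, and therefore $d_B\alpha\wedge\chi = d(\alpha\wedge\chi)$, which integrates to zero over the closed manifold $X$ by ordinary Stokes' theorem. One subtlety is that tautness is defined as the existence of \emph{some} metric making the leaves minimal, whereas the integral in \eqref{equ:basicstokes} is formulated using the fixed bundle-like metric and its associated $\chi$; I would either note that the integral $\int_X d_B\alpha\wedge\chi$ depends only on the basic cohomology class of $d_B\alpha$ paired against $\chi$ in a way that is insensitive to the metric choice, or invoke the standard fact (due to Masa) that for a Riemannian foliation on a compact manifold, tautness is equivalent to the vanishing of the basic cohomology class of $\kappa$, so one may assume the bundle-like metric itself has $\kappa=0$.

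\textbf{The main obstacle} I anticipate is the careful bookkeeping of the degree argument showing $\alpha\wedge d\chi = 0$, and more fundamentally, reconciling the tautness hypothesis (existence of \emph{a} minimizing metric) with the specific $\chi$ appearing in the statement. The honest difficulty is that $\chi$ is determined by the given bundle-like metric, so one must know that this metric can be taken taut, or that passing to a taut metric does not change the value of the integral; resolving this cleanly is the heart of the matter, whereas the differential-form manipulation via Rummler's identity is essentially formal once $\kappa=0$ is available.
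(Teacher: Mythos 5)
Your proposal follows essentially the same route as the paper's proof: expand $d(\alpha\wedge\chi)=d_B\alpha\wedge\chi-\alpha\wedge d\chi$, invoke Rummler's formula $d\chi=-\kappa\wedge\chi+\phi$, kill the $\kappa$-term by tautness and the $\phi$-term by a transverse-degree count, and conclude by ordinary Stokes' theorem on the closed manifold $X$. Two slips in your degree bookkeeping, neither of which damages the argument: first, $\alpha\wedge\chi$ has degree $(2n-1)+m=\dim X-1$, not top degree, so $d(\alpha\wedge\chi)$ does \emph{not} vanish pointwise (as your first paragraph claims) --- rather it is a top-degree form whose \emph{integral} vanishes by Stokes, which is exactly what your final paragraph correctly uses; second, $\alpha$ carries only $2n-1$ transverse legs, not all $2n$, so the vanishing $\alpha\wedge\phi=0$ genuinely requires the defining property of $\phi$ in Rummler's formula, namely $\iota_{\xi_1}\cdots\iota_{\xi_m}\phi=0$ for any $m$ leafwise vectors $\xi_j$, which forces every component of $\phi$ to have at least two transverse legs and hence $\alpha\wedge\phi$ to have at least $(2n-1)+2>2n$ of them. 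Your closing concern about reconciling tautness (existence of \emph{some} minimalizing metric) with the fixed bundle-like metric defining $\chi$ is well taken: the paper's proof glosses over this, implicitly taking $\kappa=0$ for the given metric, and your proposed resolution via the equivalence of tautness with the vanishing of the basic class of $\kappa$ is the standard way to make that step rigorous.
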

\begin{proof}  Rummler's formula \cite{rum} states that

$$d\chi = -\kappa \wedge \chi + \phi$$ 
for some $\phi\in \Omega^{m+1}(X)$ satisfying $\iota_{\xi_1}\dots \iota_{\xi_m}\phi=0$ for  any set $\{\xi_j\}$ of $m$ vectors  in $\Gamma(X, V)$.  Therefore $\alpha \wedge \phi=0$ for any basic $(2n-1)$-form $\alpha$ and by tautness it follows that $d_B \alpha \wedge \chi = d( \alpha \wedge \chi )$.\\ 
\end{proof}

A principal $G$-bundle $\pi \colon P \to X$ is said to be a {\em foliated principal bundle} if there is a rank $m$ foliation $\widetilde{\mathcal{F}}$ on $P$ with tangent distribution $T\widetilde{\mathcal{F}} \subseteq TP$ such that $\widetilde{\mathcal{F}}$ is $G$-invariant and $T\widetilde{\mathcal{F}}$ projects isomorphically onto $T\mathcal{F}$ under $\pi_*$. An isomorphism of foliated principal $G$-bundles $(P,\widetilde{\mathcal{F}}),(P',\widetilde{\mathcal{F}}')$ is a principal bundle isomorphism $\phi \colon  P \to P'$ such that $\phi(\widetilde{\mathcal{F}}) = \widetilde{\mathcal{F}}'$.  A local section $s\colon U\to P$ is called basic if $s_* : TU \to TP$ sends $T\mathcal{F}$ to $T\widetilde{\mathcal{F}}$. It is known that every foliated principal bundle admits local basic sections \cite[Proposition 2.7]{mol}. The transition functions $\{ g_{\alpha \beta} \}$ between local basic sections are $G$-valued basic functions. Conversely, if a principal bundle $P$ is defined by a collection $\{ g_{\alpha \beta} \}$ of basic transition functions, this determines a foliated structure on $P$. This is because for each $\alpha$ the obvious lift to $U_\alpha \times G$ of the foliation $\mathcal{F}|_{U_\alpha}$ is preserved by the transition functions. In this way we obtain an equivalence between foliated principal $G$-bundles and equivalence classes of basic \v{C}ech cocycles $\{ g_{\alpha \beta} \}$.\\

A connection $A$ on $P$, thought of as a $\mathfrak{g}$-valued $1$-form on $P$ is called {\em adapted} if $T\widetilde{\mathcal{F}}$ lies in the kernel of $A$. It is called {\em basic} if as a $\mathfrak g$-valued 1-form, $A$ is basic with respect to the foliation $\widetilde{\mathcal{F}}$. While every foliated principal bundle admits an adapted connection by extending $T\widetilde{\mathcal{F}}$ to a horizontal distribution, there is an obstruction to the existence of basic connections. This is ultimately due to the lack of a basic partition of unity subordinate to a given cover and the obstruction is a secondary characteristic class of the foliated bundle \cite{kato, mol}. However, if $A$ is a connection on $P$ for which the curvature $F_A$ satisfies $i_\xi F_A = 0$ for all $\xi \in T\mathcal{F}$, then the horizontal lift of $V$ with respect to $A$ is an integrable distribution and gives $P$ a foliated structure for which $A$ is basic. We note that if $X$ is a Riemannian foliation, then $H$ admits a basic connection, namely we can take the basic Levi-Civita connection associated to the basic metric on $H$.\\

A vector bundle $E$ is {\em foliated} if its associated frame bundle is foliated. In this case we will also say that $E$ has a {\em transverse structure} or that $E$ is a {\em transverse vector bundle}. We denote by $\Gamma_B(X,E)$ the space of smooth basic sections of $E$. In the \v{C}ech language a basic section is given by a collection $\{ s_\alpha \}$ of basic vector-valued functions such that $s_\alpha = g_{\alpha \beta} s_\beta$. A {\em transverse Hermitian metric} $h$ on a foliated complex vector bundle $E$ is a Hermitian metric on $E$ which is basic as a section of $E^* \otimes \overline{E}^*$. Equivalently, $h$ corresponds to a reduction of structure group of the frame bundle from $GL(n,\mathbb C)$ to $U(n)$ as a foliated principal bundle. Such a reduction is however not always possible  and therefore not every foliated complex vector bundle admits a transverse Hermitian metric.\\

Let $E$ be a foliated vector bundle. For each $d \ge 0$, one can define the {\em $d$-th basic jet bundle} $J_B^d(E)$ whose fibres are the $d$-jets of local basic sections of $E$. Clearly $J_B^d(E)$  is a foliated vector bundle in a natural way and there are short exact sequences
\begin{equation}\label{equ:jetbundles}
0 \to Sym^d( H^* ) \otimes E \to J^d_B(E) \to J^{d-1}_B(E) \to 0.
\end{equation}
A basic section of $E$ determines a basic section of $J^d_B(E)$ by prolongation. Suppose that $E$ admits a basic connection $\nabla$. We can use the basic connection to split the sequences (\ref{equ:jetbundles}) and thus non-canonically identify $J^d_B(E)$ with $\bigoplus_{j = 0}^d Sym^j( H^* ) \otimes E$, as foliated vector bundles. Suppose that $F$ is another foliated vector bundle. We define a {\em basic linear differential operator} of order $d$ from $E$ to $F$ to be a basic section of the foliated bundle $Diff^d_B(E,F) = Hom( J^d_B(E) , F)$. Clearly a basic linear differential operator defines a linear map $D : \Gamma_B( X , E) \to \Gamma_B(X , F)$ which in local foliated coordinates is given by a linear differential operator in the transverse coordinates. From the exact sequence (\ref{equ:jetbundles}), there is a natural map $\sigma : Diff^d_B(E,F) \to Hom( Sym^d( H^*) , Hom(E,F) )$ and we define the {\em symbol} of a basic linear differential operator $D$ of order $d$ to be the image $\sigma(D)$ of $D$ under this map. We say that $D$ is {\em transversally elliptic} if $\sigma(D)(\xi , \dots , \xi)$ is invertible for each $0 \neq \xi \in H^*$.

\begin{remark}\label{rem:extend}
Let $E,F$ be foliated vector bundles and $D \in \Gamma_B( X , Diff^d_B(E,F))$ a basic linear differential operator of order $d$. If $E$ admits a basic connection, then $D$ can be non-canonically extended to a linear differential operator in the ordinary sense. To see this, let $J^d(E)$ be the usual $d$-th jet bundle of $E$ and $Diff^d(E,F) = Hom( J^d(E) , F)$. We use the connection to obtain splittings $J^d_B(E) \cong \bigoplus_{j = 0}^d Sym^j( H^* ) \otimes E$, $J^d(E) \cong \bigoplus_{j = 0}^d Sym^j( T^*X ) \otimes E$ and we use the Riemannian metric $g$ to obtain a non-canonical splitting $T^*X = H^* \oplus V^*$ and thus an inclusion $H^* \to T^*X$. In this way, we obtain a non-canonical homomorphism $Diff^d_B(E,F) \to Diff^d(E,F)$.
\end{remark}

Let $E,F$ be foliated vector bundles and $D : \Gamma_B(X,E) \to \Gamma_B(X,F)$ a basic linear differential operator. Assume that $E,F$ admit transverse Hermitian structures, in other words they are vector bundles associated to foliated principal bundles with structure group a unitary group. Then we may construct the formal adjoint operator $D^* : \Gamma_B(X , F) \to \Gamma_B(X , E)$ \cite{elk} which has the property
\[
\langle Ds , t \rangle = \langle s , D^* t \rangle
\]
for all $s \in \Gamma_B(X,E)$, $t \in \Gamma_B(X , F)$, where $\langle \; , \; \rangle$ is the usual $L^2$-pairing of sections of a Hermitian vector bundle. Let us give a construction of $D^*$ under the assumptions that the foliation is taut and that $E$ admits a basic connection $\nabla$. Whenever we take the adjoint of a basic differential operator in this paper, these assumptions will hold. As in Remark \ref{rem:extend} we can use $\nabla$ to extend $D$ to an ordinary linear differential operator, which has the form
\[
D(s) = \sum_{j = 0}^d \sum_{|I| = j} a_{(j)}(e^{i_1} , e^{i_2} , \dots , e^{i_j} ) \nabla_{e_{i_1}} \nabla_{e_{i_2}} \cdots \nabla_{e_{i_j}} s
\]
where $e_1 , \dots , e_{2n}$ is a local frame of basic sections of $H$, $e^1, \dots , e^{2n}$ the dual coframe and $a_{(j)}$ is a basic section of $Sym^j( H ) \otimes Hom(E,F)$. We claim that $D^*$ is given by the usual formula for the adjoint:
\[
D^*(s) = \sum_{j = 0}^d \sum_{|I| = j}  \nabla^*_{e_{i_j}} \cdots \nabla^*_{e_{i_2}} \nabla^*_{e_{i_1}} (a_{(j)}^*(e^{i_1} , e^{i_2} , \dots , e^{i_j} )s ),
\]
where $\nabla^*_{e_i} s = -\nabla_{e_i} s - div(e_i)s$ and $div(e_i)$ is the divergence of $e_i$. To see this, note that by the assumption of tautness, the divergence of $Y \in \Gamma(X , H)$ is determined by $div(Y) vol_T = d_B( i_Y vol_T)$, which shows that if $Y$ is a basic section of $H$, then $div(Y)$ is a basic function. Therefore the formal adjoint $D^*$ is again a basic differential operator.\\

From the theory of transversally elliptic operators, we have:
\begin{theorem}[\cite{elk}]
Let $X$ be a compact Riemannian foliated manifold, $E,F$ foliated complex vector bundles admitting transverse Hermitian metrics and $D : \Gamma_B(X , E) \to \Gamma_B(X , F)$ a basic linear differential operator of order $d$. Then $D$ is Fredholm, that is, $Ker(D)$ and $Ker(D^*)$ are finite-dimensional.
\end{theorem}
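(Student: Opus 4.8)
The plan is to treat $D$ as a genuinely elliptic operator \emph{in the transverse directions} and then invoke the basic Sobolev theory of Section~\ref{sec:sobolev}. The key observation is that, because $D$ acts on basic sections, in any foliated chart $U \cong V \times W$ (with $V \subseteq \mathbb{R}^{2n}$ transverse and $W \subseteq \mathbb{R}^m$ leafwise) every basic section depends only on the transverse coordinate on $V$, and $D$ restricts to an honest linear differential operator $D_V$ in these coordinates whose principal symbol is precisely $\sigma(D)(\xi,\dots,\xi)$ for $\xi \in H^*$. Transverse ellipticity thus says exactly that $D_V$ is elliptic on $V$. The basic Sobolev norm likewise only sees transverse derivatives, so locally it agrees with an ordinary Sobolev norm on $V$.

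First I would establish a \emph{basic G\aa rding inequality}. Working in a finite cover of $X$ by foliated charts and applying the classical interior elliptic estimate to the transverse operator $D_V$ in each chart (treating the leafwise variables as parameters, and using the extension of $D$ to an ordinary operator furnished by Remark~\ref{rem:extend}), I would patch these local estimates together to obtain a global a priori inequality
\[
\|s\|_{L^{2,d}_B} \le C\bigl( \|Ds\|_{L^2_B} + \|s\|_{L^2_B} \bigr), \qquad s \in \Gamma_B(X,E).
\]
Some care is required in the patching, since a basic partition of unity need not exist; however the estimate is purely local in the transverse variable, and since both sides involve only transverse derivatives of the basic section $s$, a genuine (non-basic) partition of unity subordinate to the finite subcover suffices to assemble the global bound.

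With the G\aa rding inequality in hand, the Fredholm property follows from the standard functional-analytic argument. Extending $D$ to a bounded map $L^{2,d}_B(E) \to L^2_B(F)$, the inequality shows that on $\ker D$ the $L^{2,d}_B$ and $L^2_B$ norms are equivalent; combined with the compactness of the embedding $L^{2,d}_B(E) \hookrightarrow L^2_B(E)$ established in Section~\ref{sec:sobolev}, the unit ball of $\ker D$ is compact, whence $\ker D$ is finite dimensional. A Peetre-type argument then upgrades the estimate to $\|s\|_{L^{2,d}_B} \le C\|Ds\|_{L^2_B}$ on a complement of the kernel, giving closed range, and transverse elliptic regularity shows every element of $\ker D$ is a smooth basic section. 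Finally, as recorded in the construction of the adjoint preceding this theorem, $D^*$ is again a basic transversally elliptic operator (this uses tautness and the existence of a basic connection), so the identical argument gives $\dim \ker D^* < \infty$. Hence $D$ is Fredholm.

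The step I expect to be the main obstacle is the G\aa rding estimate itself: precisely because $D$ is only transversally — not genuinely — elliptic, the classical estimate controls only the transverse derivatives, and one must verify that this is enough to bound the full basic Sobolev norm. This holds here only because basic sections carry no leafwise dependence, so that their basic Sobolev norm already involves transverse derivatives alone. Making this rigorous globally, rather than chart by chart, is where the real work lies and is the content of the transversally elliptic theory of \cite{elk}.
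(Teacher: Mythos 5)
Your proposal is correct in its essentials, but note that the paper itself offers no proof of this statement: it is quoted directly from El Kacimi-Alaoui \cite{elk}, whose treatment goes through Molino's structure theory for Riemannian foliations (lifting to the transverse orthonormal frame bundle, where basic sections become invariant sections over a compact ``basic manifold'') and yields much more than finite-dimensional kernels, namely a parametrix, Hodge decomposition and spectral theory for transversally elliptic operators. Your route is genuinely different and more elementary, and it does suffice for the weak form of ``Fredholm'' asserted here: interior elliptic estimates for the transverse operators $D_{V_i}$ in finitely many foliated charts assemble into a basic G\aa rding inequality, which together with the compactness of $L^{2,d}_B(E) \hookrightarrow L^{2}_B(E)$ from Section \ref{sec:sobolev} and Riesz's lemma forces $\dim \ker D < \infty$; and since (under the paper's standing assumptions of tautness and a basic connection, which it invokes for exactly this purpose) $D^*$ is again a basic operator whose symbol is, up to sign, the adjoint of $\sigma(D)$, hence transversally elliptic, the same argument handles $\ker D^*$. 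This is also stylistically consistent with how the paper proves its basic Sobolev theorems, by chart-by-chart reduction to the transverse variable. What the citation to \cite{elk} buys, beyond brevity, is the stronger package (in particular closed range and stability of the index under compact perturbations, which the paper later uses in Lemma \ref{lem:ind0}); your Peetre-type addendum recovers closed range, so your argument would support that use as well.

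Two points in your write-up need care. First, the statement as printed omits the hypothesis that $D$ be \emph{transversally elliptic}; without it the claim is false (take $D = 0$), and your proof correctly restores this hypothesis, consistent with the sentence introducing the theorem in the paper. Second, your appeal to a genuine (non-basic) partition of unity $\{\rho_i\}$ must be understood as splitting \emph{integrals} only: one cannot apply any elliptic estimate to $\rho_i s$, because $\rho_i s$ is no longer basic and the extension of $D$ to non-basic sections (Remark \ref{rem:extend}) is not elliptic, so the usual cutoff argument is unavailable. The clean patching is the one you gesture at: cover $X$ by finitely many shrunken foliated charts $U_i' \Subset U_i$, apply the interior estimate for $D_{V_i}$ on $V_i' \Subset V_i$ to the transverse representative of $s$ itself, and sum over $i$, using that a basic section satisfies $\nabla_V s = 0$ (the fact the paper uses in Lemma \ref{lem:trreg}), so that the basic Sobolev norm involves transverse derivatives alone. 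With that reading, your argument goes through.
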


Let $E$ be a foliated vector bundle. Denote by $\Omega^k(X,E)$ the space of $k$-form valued sections of $E$ and by $\Omega^k_B(X , E)$ the space of basic $k$-form valued basic sections of $E$. The latter is given by $\Omega^k_B(X,E) = \Gamma_B(X , E \otimes \wedge^k H^* )$. Suppose that $\nabla$ is a basic connection on $E$. We have that $\alpha \in \Omega^k(X , E)$ is a basic section if and only if $i_\xi \alpha = 0$ and $i_\xi d_\nabla \alpha = 0$ for all $\xi \in \Gamma(X,V)$. It follows that the restriction of $d_\nabla : \Omega^k( X , E ) \to \Omega^{k+1}(X,E)$ to basic sections induces a basic first order differential operator $d_\nabla : \Omega^k_B(X , E) \to \Omega^{k+1}_B(X , E)$.

\subsection{Basic Sobolev spaces and elliptic regularity}\label{sec:sobolev}
In this section, we assume $X$ is a compact, oriented, Riemannian foliation of codimension $2n$. 
\begin{definition} Let  $E$ be a foliated vector bundle on $X$ equipped with a transverse Hermitian structure and a compatible basic connection $\nabla$.  For $k$ a non-negative integer and $p\in [1,\infty)$, the {\em basic Sobolev space} $L^{p,k}_B(E)$ is defined as the norm closure in $L^{p,k}(E)$ of the space of smooth basic sections $\Gamma_B(X,E)$ under the Sobolev norm 
$$\|s\|_{p,k}=\left(\sum_{j=0}^k||\nabla^js||^p_{L^p}\right)^{\frac 1p}, \quad s\in \Gamma(X,E).$$
Similarly, for a non-negative integer $k$ we define $\mathcal{C}^k_B(E)$ to be the subspace of $\mathcal{C}^k(E)$ consisting of basic sections. It is easy to see that the limit of a $\mathcal{C}^k$-convergent sequence of basic sections of $E$ is again basic, hence $\mathcal{C}^k_B(E)$ is a closed subspace of $\mathcal{C}^k(E)$.
\end{definition}

\begin{remark}
Note that to define $\mathcal{C}^0_B(E)$, we need to say what is a continuous basic section of $E$. We say that a continuous section of $E$ is basic if in each local basic trivialisation of $E$, the section is given by a basic function, where a continuous function $f$ defined on an open subset $A \subseteq X$ is called basic if for each $x \in A$, there exists a local foliated chart $U = V \times W \subseteq A$ containing $x$ for which $f|_U$ is the pullback of a continuous function on $V$ under the projection $V \times W \to V$. For differentiable sections, this clearly agrees with our previous notion of basic sections. It is also clear that this notion of basic sections is closed under $\mathcal{C}^0$-limits.
\end{remark}

\begin{theorem}[Basic Sobolev embedding and compactness] 
\leavevmode
\begin{itemize}
\item[(i)]{
For all integers $k,l$ such that $k \geqslant l\geqslant 0$ and for all $p,q \in [1,\infty)$ such that $k - \frac{2n}{p}\geqslant l-\frac{2n}{q}$, there is a continuous inclusion 
$$ L_B^{p,k}(E)\hookrightarrow L_B^{q,l}(E). $$
Moreover, if $k > l$ and $k - \frac{2n}{p} > l-\frac{2n}{q}$ then the inclusion is compact.
}
\item[(ii)]{
For all integers $k,l$ with $k > l \geqslant 0$ and for all $p \in [1,\infty)$ such that $k - \frac{2n}{p} > l$, there is a continuous inclusion
\[
L_B^{p,k}(E) \hookrightarrow \mathcal{C}_B^l(E).
\]
Moreover, the inclusion is compact.
}
\end{itemize}
\end{theorem}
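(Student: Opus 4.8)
The plan is to reduce the basic Sobolev embedding and compactness theorem to the classical (non-foliated) version by a localization argument, exploiting the fact that, in a foliated chart $U \cong V \times W \subseteq \mathbb{R}^{2n} \times \mathbb{R}^m$, a basic section depends only on the transverse coordinates on $V \subseteq \mathbb{R}^{2n}$. Since a basic section of $E$ is locally given by a function on $V$ valued in the fibre, and its basic covariant derivatives $\nabla^j s$ are computed purely in the transverse directions, the $L^p$-norm over $U$ of $\nabla^j s$ factors as the transverse $L^p$-norm over $V$ times the (finite, bounded) leafwise volume of $W$. Thus locally the basic Sobolev norm is comparable to the ordinary Sobolev norm of a section over the $2n$-dimensional transverse slice $V$, which is precisely why the embedding exponents involve $2n$ rather than $\dim X = 2n + m$.

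The key steps, in order, would be the following. First I would fix a finite cover of the compact manifold $X$ by foliated charts $U_\alpha \cong V_\alpha \times W_\alpha$ over which $E$ is basically trivialized, together with a subordinate partition of unity; here I must be careful, since the obstruction to basic partitions of unity noted earlier means the partition functions need not be basic, but this is harmless because I only use them to glue \emph{estimates}, not to produce basic sections. Second, over each chart I would compare the restricted basic norm $\|s\|_{p,k}$ with the classical Sobolev norm of the corresponding transverse-variable function on $V_\alpha \subseteq \mathbb{R}^{2n}$, obtaining two-sided bounds with constants depending on the metric $g$, the connection $\nabla$, and the leafwise volumes of the $W_\alpha$. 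Third, I would invoke the classical Sobolev embedding theorem on the $2n$-dimensional domains $V_\alpha$ (or on a compact transverse model), which yields exactly the stated numerology: the continuous inclusion $L^{p,k} \hookrightarrow L^{q,l}$ holds when $k - \tfrac{2n}{p} \geqslant l - \tfrac{2n}{q}$, and $L^{p,k} \hookrightarrow \mathcal{C}^l$ when $k - \tfrac{2n}{p} > l$. Fourth, I would sum the local estimates against the partition of unity to deduce the global continuous inclusions for $L^{p,k}_B(E)$. For the compactness statements, I would take a bounded sequence in $L^{p,k}_B(E)$, apply the classical Rellich--Kondrachov compactness on each chart to extract locally convergent subsequences, and use a diagonal argument over the finite cover to produce a subsequence converging in $L^{q,l}$ (respectively $\mathcal{C}^l$); finally I would check that the limit is again basic, which follows from the closedness of $L^{q,l}_B(E)$ and $\mathcal{C}^l_B(E)$ noted in the definition and remark above.

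The main obstacle I anticipate is \emph{not} the embedding numerology, which is dictated by the transverse dimension $2n$, but rather ensuring that the localization faithfully respects the basic condition across chart overlaps. Two subtleties require care: first, the transition functions $g_{\alpha\beta}$ between basic trivializations are basic, so they interact cleanly with basic sections, but the partition of unity is generally non-basic, and one must verify that multiplying a basic section by a non-basic cutoff and then covariantly differentiating does not corrupt the transverse-only dependence of the estimate—this is resolved by differentiating only in transverse directions and absorbing leafwise derivatives of the cutoff into bounded zeroth-order terms controlled by the geometry. Second, because the foliation is only locally a product (holonomy may be nontrivial), I should work with the transverse metric's holonomy invariance to guarantee that the local comparison constants can be chosen uniformly over the finite cover; tautness and the bundle-like property of $g$ are exactly what make the transverse integration and the divergence terms well-behaved, as already exploited in the construction of the formal adjoint above. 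Once these bookkeeping points are handled, the result is a clean transcription of the classical theorems to the transverse slices.
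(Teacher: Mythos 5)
Your proposal is correct and follows essentially the same route as the paper's proof (which itself follows \cite{klw}): localize to finitely many foliated charts in which basic sections become functions of the transverse variables alone, apply the classical Sobolev embedding and Rellich--Kondrachov compactness theorems on the $2n$-dimensional transverse domains $V_\alpha \subseteq \mathbb{R}^{2n}$, and conclude that limits remain basic via the closedness of $L^{q,l}_B(E)$ (resp.\ $\mathcal{C}^l_B(E)$). The only cosmetic difference is your use of a non-basic partition of unity to glue the estimates, whereas the paper simply sums norms over a finite cover of charts, so your worry about cutoffs corrupting the basic condition is avoidable rather than a genuine obstacle.
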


\begin{proof}
The argument is essentially that of \cite[Theorem 9, 10]{klw} which we now sketch. Consider for instance the case $L_B^{p,k}(E)\hookrightarrow L_B^{q,l}(E)$. Let $f \in L_B^{p,k}(E)$. Then $f$ is the $L^{p,k}$-limit of a sequence $f_i$ of basic smooth functions. Consider a local foliated chart $U = V \times W$ over which $E$ is trivialised as a foliated bundle. We assume that the foliated chart is chosen so that the closure $\overline{U}$ is contained in a slightly larger foliated chart $\tilde{U} = \tilde{V} \times \tilde{W}$ and that the local trivialisation of $E$ extends to $\tilde{U}$. We also assume that $\tilde{V} \subset \mathbb{R}^{2n}$, $\tilde{W} \subset \mathbb{R}^{m}$ are bounded. Then $f_i|_U$ is given by a smooth vector-valued function on $V$. Taking the $L^{p,k}$-limit, we see that $f|_U$ is given by an $L^{p,k}$ function on $V$. Under the stated assumptions, the Sobolev embedding theorem for $V$ gives a continuous injection $L^{p,k}(V) \to L^{q,l}(V)$ (see for instance \cite{adams}). By considering a collection of such foliated charts covering $X$, we see that the sequence $\{ f_i \}$ converges in the $L^{q,l}$-norm and therefore $f \in L^{q,l}_B(E)$ by the definition of $L^{q,l}_B(E)$. The Sobolev embedding theorem also implies an estimate $|| f_i ||_{q,l} \le C || f_i ||_{p,k}$, for a constant $C$ which does not depend on $f$ or the $f_i$. Hence $|| f ||_{q,l} \le C || f ||_{p,k}$, so that the inclusion $L_B^{p,k}(E)\hookrightarrow L_B^{q,l}(E)$ is continuous.\\

We now argue that if $k > l$ and $k - \frac{2n}{p} > l-\frac{2n}{q}$, the inclusion is compact. Let $f_i$ be a bounded sequence in $L^{p,k}_B(E)$. Then in each foliated chart $U = V \times W$ of the type previously described, we can apply Sobolev compactness of the inclusion $L^{p,k}(V) \to L^{q,l}(V)$ (see \cite{adams}) to deduce that there is a subsequence of $\{ f_i \}$ which converges in $L^{q,l}( V \times W)$. Since we can cover $X$ by finitely many such charts, we can find a subsequence of $\{ f_{i_j} \}$ which converges in $L^{q,l}(E)$. But since each $f_i$ belongs to the closed subspace $L^{q,l}_B(E)$, it follows that the convegent subsequence $\{ f_{i_j} \}$ converges to an element of $L^{q,l}_B(E)$, which proves compactness. The case of $L_B^{p,k}(E) \hookrightarrow \mathcal{C}_B^l(E)$ is proved similarly.
\end{proof}

\begin{theorem}[Basic Sobolev multiplication]
Let $E,F$ be foliated vector bundles equipped with transverse Hermitian metrics and basic unitary connections. Let $k,k',l$ be integers and $p,p',q \in [1,\infty)$ be such that $\left( k - \frac{2n}{p} \right) + \left( k' - \frac{2n}{p'} \right) > \left( l - \frac{2n}{q} \right)$, $k,k' \ge l$, $(k-l)p < 2n$ and $(k'-l)p' < 2n$.
Then multiplication of smooth basic sections extends to a continuous map
\[
L^{p,k}_B(E) \times L^{p',k'}_B(F) \to L^{q , l}_B( E \otimes F).
\]
\end{theorem}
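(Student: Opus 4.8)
The plan is to reduce the statement to the classical Sobolev multiplication theorem on bounded domains of $\mathbb{R}^{2n}$, following the same localization strategy used in the proof of the basic Sobolev embedding theorem. Since smooth basic sections are dense in the basic Sobolev spaces by definition, it suffices to establish the bilinear estimate
\[
\| s \otimes t \|_{q,l} \le C \, \| s \|_{p,k} \, \| t \|_{p',k'}
\]
for all smooth basic sections $s \in \Gamma_B(X,E)$, $t \in \Gamma_B(X,F)$, with $C$ independent of $s,t$. The product of smooth basic sections is again a smooth basic section, so once the estimate is in hand it extends multiplication to a continuous bilinear map into the \emph{closed} subspace $L^{q,l}_B(E \otimes F)$ by a routine Cauchy-sequence argument: for $s_i \to s$, $t_j \to t$ with $s_i,t_j$ smooth basic, the estimate makes $s_i \otimes t_j$ Cauchy in $L^{q,l}$, and the limit, being a limit of smooth basic sections, lies in $L^{q,l}_B(E \otimes F)$.

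First I would cover $X$ by finitely many foliated charts $U_\alpha = V_\alpha \times W_\alpha$, chosen as in the embedding theorem so that each $\overline{U_\alpha}$ lies in a slightly larger chart over which $E$ and $F$ trivialise as foliated bundles, with $V_\alpha \subset \mathbb{R}^{2n}$, $W_\alpha \subset \mathbb{R}^m$ bounded. On such a chart a smooth basic section of $E$ (resp.\ $F$) is the pullback of a smooth vector-valued function $\hat{s}$ (resp.\ $\hat{t}$) on $V_\alpha$, and $s \otimes t$ is the pullback of the pointwise tensor product $\hat{s} \otimes \hat{t}$. Because the connection form, the metric and the volume form $vol_X = vol_T \wedge \chi$ all have smooth coefficients bounded above and below on the precompact chart, and because the tensor-product connection obeys the Leibniz rule $\nabla(s\otimes t) = (\nabla s)\otimes t + s \otimes (\nabla t)$, the covariant basic Sobolev norms restricted to $U_\alpha$ are equivalent to the flat Euclidean Sobolev norms of the component functions on $V_\alpha$; in particular $\| s \|_{p,k,U_\alpha}$ is comparable to $\| \hat{s} \|_{L^{p,k}(V_\alpha)}$, and similarly for the other norms.

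Next I would invoke the classical Sobolev multiplication theorem on the bounded domain $V_\alpha \subset \mathbb{R}^{2n}$: under the hypotheses $k,k' \ge l$, $(k-l)p < 2n$, $(k'-l)p' < 2n$ and $\big(k - \tfrac{2n}{p}\big) + \big(k' - \tfrac{2n}{p'}\big) > \big(l - \tfrac{2n}{q}\big)$ — which are precisely the hypotheses of the statement with ambient dimension equal to the transverse dimension $2n$ — multiplication is a continuous bilinear map $L^{p,k}(V_\alpha) \times L^{p',k'}(V_\alpha) \to L^{q,l}(V_\alpha)$ (see \cite{adams}), so that $\| \hat{s} \otimes \hat{t} \|_{L^{q,l}(V_\alpha)} \le C_\alpha \, \| \hat{s} \|_{L^{p,k}(V_\alpha)} \, \| \hat{t} \|_{L^{p',k'}(V_\alpha)}$. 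To globalize, I would combine the chart-wise equivalences with the elementary bound $\int_X |\nabla^j u|^q \le \sum_\alpha \int_{U_\alpha} |\nabla^j u|^q$, valid because the $U_\alpha$ cover $X$ and requiring no partition of unity (let alone a basic one, which need not exist). This gives $\| s \otimes t \|_{q,l}^q \le \sum_\alpha \| s \otimes t \|_{q,l,U_\alpha}^q \le C \sum_\alpha \big( \| s \|_{p,k} \| t \|_{p',k'} \big)^q$, and since the cover is finite one obtains the desired estimate after taking $q$-th roots.

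I expect the main obstacle to be the verification in the second step, namely that the covariantly-defined basic Sobolev norm over $(X, vol_X)$ is genuinely equivalent, on each precompact chart, to the flat Euclidean Sobolev norm of the transverse component functions on $V_\alpha$. This is where the geometry of the foliation enters: one must check that a covariant derivative $\nabla^j$ of a basic section reduces in the chart to a transverse Euclidean derivative plus lower-order terms with bounded smooth coefficients, and that the factorization $vol_X = vol_T \wedge \chi$ lets one compare the $L^q(U_\alpha, vol_X)$-norm of a basic integrand with its $L^q(V_\alpha)$-norm up to a bounded factor coming from the leaf directions. Once this reduction is justified, the result follows immediately from the classical theorem, exactly as the basic embedding theorem followed from ordinary Sobolev embedding.
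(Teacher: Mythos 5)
Your proposal is correct, but it takes a genuinely different route from the paper's proof. The paper argues intrinsically: using the subcriticality hypotheses $(k-l)p < 2n$, $(k'-l)p' < 2n$ it finds $\tilde{p}, \tilde{p}'$ with $k - \frac{2n}{p} = l - \frac{2n}{\tilde{p}}$ (and similarly for $k'$), applies the already-established basic Sobolev embedding theorem to reduce to the case $k = k' = l$, settles $k=0$ by H\"older's inequality, and then runs an induction on $k$ following the proof of \cite[Lemma B.3]{weh}, with the basic embedding theorem substituted for the ordinary one; the only localization to charts is the one quarantined inside the proof of the basic embedding theorem. You instead re-run that localization directly for the multiplication statement: chart-wise equivalence of the covariant basic norm with the flat Euclidean norm of the transverse component functions, the classical bilinear multiplication theorem on bounded domains of $\mathbb{R}^{2n}$, and summation over a finite cover---and you are right that the sub-additivity step needs no partition of unity, basic or otherwise, which is exactly what makes this strategy (and the paper's embedding proof) viable in the foliated setting. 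What your route buys is that the heuristic ``basic Sobolev theory is classical Sobolev theory in transverse dimension $2n$'' is made completely explicit, with no new induction to carry out on $X$. What it costs is the black box you invoke: the bilinear multiplication theorem with these general exponents is not actually in \cite{adams}, which contains the embedding theorems and the Banach-algebra case but not this statement; the correct classical citation is essentially \cite[Lemma B.3]{weh} applied on a closed ball (a compact manifold with boundary), i.e.\ the very lemma whose proof the paper adapts---or else you must prove it on $V_\alpha$ by the same embedding-plus-H\"older induction the paper performs intrinsically. Two further small points: choose the transverse factors $V_\alpha$ to be balls (or otherwise satisfy a cone/Lipschitz condition) so that the classical Euclidean theorems apply, and note that your chart-wise norm equivalence, while correct for smooth basic sections on precompact charts, deserves the verification you flag, since the higher covariant derivatives $\nabla^j$ mix in connection and Levi-Civita terms; this is the same verification the paper's embedding proof uses implicitly, so your proof stands on equal footing with it.
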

\begin{proof}
First, let us reduce this result to the case where $k = k' = l$. If $k > l$, then the assumption $(k-l)p < 2n$ ensures that there exists $\tilde{p} \in [1,\infty)$ such that $k - \frac{2n}{p} = l - \frac{2n}{\tilde{p}}$. Then we can use the basic Sobolev embedding theorem to replace $L^{p,k}_B(E)$ with $L^{\tilde{p} , l}_B(E)$. A similar argument applies if $k' > l$.\\

We now assume $k = k' = l$. In this case, $k,p,p',q$ satisfy $\frac{k}{2n} + \frac{1}{q} < \frac{1}{p} + \frac{1}{p'}$. If $k = 0$, then the fact that multiplication extends to a continuous map $L^{p}_B(E) \times L^{p'}_B(F) \to L^{q}_B( E \otimes F)$ follows from the H\"older inequality. Now we proceed by induction on $k$. The rest of the proof is exactly the same as the proof of \cite[Lemma B.3]{weh}, except using the basic Sobolev embedding theorem in place of the ordinary one.
\end{proof}

We also need to use a transverse analogue of elliptic regularity. The following version suffices for our purposes:
\begin{lemma}[Basic elliptic regularity]\label{lem:trreg}
Let $E$ be a transverse vector bundle with admits a transverse Hermitian metric and a basic connection $\nabla$. Let $L : \Gamma_B(X,E) \to \Gamma_B(X,E)$ be a second order transverse elliptic differential operator from $E$ to itself. Suppose that $k \ge 2$ and $p \in [1,\infty)$. Let $s \in L^{p,k}(E) \cap L^{p,1}_B(E)$ be such that $L(s) \in L^{p,k-1}(E)$. Then we have that $s \in L^{p,k+1}(E) \cap L^{p,1}_B(E)$.
\end{lemma}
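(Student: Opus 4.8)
The plan is to exploit the fact that a transversally elliptic \emph{basic} operator $L$ is, in every foliated chart, an honest second-order elliptic operator acting only in the transverse variables, and that a basic section is locally a function of those variables alone. In this way the statement reduces to the classical interior $L^p$ elliptic regularity estimate on an open subset of $\mathbb{R}^{2n}$, with the gain of one derivative coming from the hypothesis that $L(s)$ is one order more regular than the generic bound $L^{p,k-2}(E)$ would give.

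First I would localize. Since $X$ is compact, cover it by finitely many foliated charts $U = V \times W \subseteq \mathbb{R}^{2n} \times \mathbb{R}^m$ over which $E$ is trivialized as a foliated bundle, together with slightly smaller subcharts $U' = V' \times W$ that still cover $X$. Because $s \in L^{p,1}_B(E)$ is by definition an $L^{p,1}$-limit of smooth basic sections, and each smooth basic section is in the chart the pullback of a smooth function on $V$, the restriction $s|_U$ is the pullback of a function $\tilde s \in L^{p,1}(V)$ (the $W$-directions only contribute a constant volume factor). A basic section has vanishing leafwise derivatives, so the additional hypothesis $s \in L^{p,k}(E)$ says precisely that $\tilde s \in L^{p,k}(V)$. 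Similarly, $L$ is a basic operator, so in the chart it differentiates only in the transverse variables; hence $L(s)$ is again basic and equals the pullback of $L\tilde s$, and the hypothesis $L(s) \in L^{p,k-1}(E)$ becomes $L\tilde s \in L^{p,k-1}(V)$.

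Next I would apply ordinary elliptic regularity on the slice. Transversal ellipticity of $L$ means that its transverse symbol $\sigma(L)(\xi,\xi)$ is invertible for all $0 \neq \xi \in H^* \cong T^*V$, i.e.\ $L$ is a genuinely elliptic operator of order $2$ on $V$. Choosing a transverse cut-off $\rho = \rho(x)$ supported in $V$ and equal to $1$ on $V'$ (which is itself basic), the commutator $[L,\rho]$ is a first-order operator, so $L(\rho \tilde s) = \rho\, L\tilde s + [L,\rho]\tilde s \in L^{p,k-1}(V)$ because $\tilde s \in L^{p,k}(V)$. The standard interior $L^p$ elliptic estimate then upgrades $\rho \tilde s$ from $L^{p,k}$ to $L^{p,k+1}$, so $s|_{U'} \in L^{p,k+1}$. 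Summing the resulting local bounds over the finite subcover by means of an ordinary partition of unity gives $s \in L^{p,k+1}(E)$; the membership $s \in L^{p,1}_B(E)$ is simply retained from the hypotheses.

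The main obstacle is the passage from transversal to genuine ellipticity, that is, the verification that a section which is only known to be basic at the $L^{p,1}$ level but regular at the $L^{p,k}$ level descends, in each chart, to an $L^{p,k}$ function of the transverse variables for which $L$ is elliptic. The delicate point is to keep the two a priori hypotheses cleanly separated --- basic-ness supplies the reduction to the slice, while regularity supplies the Sobolev control --- and to confirm that leafwise derivatives never enter. One should note that the patching uses ordinary, not basic, partitions of unity; this is legitimate precisely because regularity is a purely local assertion and no global basic partition of unity (which need not exist) is required.
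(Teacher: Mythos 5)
Your proposal is correct, but it follows a genuinely different route from the paper's proof. The paper does not localize at all: using Remark \ref{rem:extend} it extends $L$ to an ordinary second-order operator on all sections, forms the leafwise derivative $\nabla_V$ (the $V^*$-component of $\nabla$), and considers the auxiliary operator $D = \nabla_V^* \nabla_V + L$. Transverse ellipticity of $L$ makes $D$ genuinely elliptic on all of $X$, and the hypothesis $s \in L^{p,1}_B(E)$ forces $\nabla_V s = 0$ (this holds for the approximating smooth basic sections, hence in the $L^{p,1}$-limit), so $D(s) = L(s) \in L^{p,k-1}(E)$ and the conclusion follows from ordinary global elliptic regularity on the compact manifold $X$. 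Your proof instead descends $s$ and $L$ to the transverse slice $V$ of each foliated chart and applies the classical interior $L^p$ estimate in $\mathbb{R}^{2n}$, patching with an ordinary partition of unity. The paper's device buys brevity and sidesteps precisely the point you yourself flag as delicate: it never needs the Fubini-type assertion that an element of $L^{p,k}(E) \cap L^{p,1}_B(E)$ is, in each chart $U = V \times W$, the pullback of an $L^{p,k}(V)$ function --- a true statement, but one that deserves a few lines of proof (pair against test functions of the form $\phi(x)\psi(w)$ with $\int \psi = 1$ to see that weak transverse derivatives of $s$ descend to weak derivatives of the slice function), whereas your write-up asserts it rather than proves it. Conversely, your route makes transparent why transverse ellipticity alone suffices, and it uses nothing beyond interior estimates on open subsets of $\mathbb{R}^{2n}$, avoiding both the non-canonical extension of $L$ and the auxiliary operator $D$. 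Note also that both arguments quietly require $1 < p < \infty$ for the $L^p$ elliptic estimate (Calder\'on--Zygmund theory fails at $p = 1$), so this shared caveat is not a defect of your proof relative to the paper's.
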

\begin{proof}
By Remark \ref{rem:extend}, we can (non-canonically) extend $L$ to a second order differential operator $L : \Gamma(X , E) \to \Gamma(X,E)$. Let $\nabla_V : \Gamma( X , E) \to \Gamma(X , E \otimes V^*)$ denote the composition of $\nabla : \Gamma(X , E) \to \Gamma(X , E \otimes T^*X)$ with the orthogonal projection $T^*X \to V^*$. Let $\nabla_V^* : \Gamma(X , E\otimes V^*) \to \Gamma(X , E)$ be the formal adjoint of $\nabla_V$. Define $D : \Gamma(X , E) \to \Gamma(X , E)$ to be the second order differential operator
\[
D(a) = \nabla_V^* \nabla_V a + L(a).
\]
We have that $D$ is elliptic because $L$ is transverse elliptic. Now let $s \in L^{p,k}(E) \cap L^{p,1}_B(E)$ and suppose that $L(s) \in L^{p,k-1}(E)$. Note that $s \in L^{p,1}_B(E)$ implies that $\nabla_V s = 0$. This is because $s$ is the $L^{p,1}$-limit of a sequence $s_i$ of basic smooth sections. Since the $s_i$ are basic and smooth, they satisfy $\nabla_V s_i = 0$, hence also $\nabla_V s = 0$. It follows that $D(s) = L(s) \in L^{p,k-1}(E)$. By the usual elliptic regularity of linear elliptic differential operators, we have $s \in L^{p,k+1}(E)$. 
\end{proof}
\begin{remark}
Note that for $k \ge 1$, we have $L^{p,k}_B(E) \subseteq L^{p,k}(E) \cap L^{p,1}_B(E)$. We do not know whether this inclusion is an equality. Fortunately, the above lemma suffices for the results of this paper.
\end{remark}

\section{Transverse complex geometry}\label{sec:trcomplex}

\subsection{Transverse Hermitian structures and transverse Gauduchon metrics}\label{sec:trherm}

\begin{definition}
Let $X$ be a foliated manifold such that the foliation has codimension $2n$. Let $V = T\mathcal{F}$ be the tangent distribution to the foliation and $H = TX/V$ the normal bundle. A {\em transverse almost complex structure} is an endomorphism $I : H \to H$ such that $I^2 = -Id$. We say that the almost complex structure is {\em integrable} and that $X$ has a {\em transverse complex structure} if $X$ can be covered by foliated charts $U_\alpha = V_\alpha \times W_\alpha$, such that each $V_\alpha$ is an open subset of $\mathbb{C}^{n}$ and such that $I|_{U_\alpha}$ agrees with the natural complex structure on $V_\alpha$ obtained from the inclusion $V_\alpha \subseteq \mathbb{C}^n$.
\end{definition}

\begin{definition}
A {\em transverse Hermitian structure} on a Riemannian foliated manifold $X$ is a pair $(g , I)$ consisting of a bundle-like Riemannian metric $g$ and a transverse integrable complex structure $I$ such that $g$ and $I$ are compatible in the usual sense: $g(IX,IY) = g(X,Y)$ for all $X,Y \in H$. In this case we define the associated Hermitian $2$-form $\omega \in \Gamma( X , \wedge^2 H^*)$ by $\omega( X , Y ) = g(IX , Y)$. Pulling back by the natural projection $TX \to H$, we identify $\omega$ with a $2$-form on $X$. Using that fact that $g$ is bundle-like and $I$ is integrable, one sees that $\omega$ is a real basic $2$-form, $\omega \in \Omega^2_B(X)$.
\end{definition}

Throughout this section, unless stated otherwise we assume $X$ is a compact oriented, taut, transverse Hermitian foliation of complex codimension $n$. We may introduce transverse analogues of all the usual notions in Hermitian geometry. For instance we have the Lefschetz operator:
\[
L : \Omega^j_B(X) \to \Omega^{j+2}_B(X), \quad L(\alpha) = \omega \wedge \alpha
\]
and the contraction operator, the adjoint of the Lefschetz operator:
\[
\Lambda : \Omega^j_B(X) \to \Omega^{j-2}_B(X), \quad \Lambda(\alpha) = L^*(\alpha).
\]
The transverse complex structure $I$ allows us to speak of basic differential forms of type $(p,q)$. We denote the space of such forms as $\Omega^{p,q}_B(X,\mathbb{C})$. The exterior derivative $d$, restricted to basic differential forms can be decomposed as $d = \partial + \overline{\partial}$, where $\partial : \Omega^{p,q}_B(X,\mathbb{C}) \to \Omega^{p+1,q}_B(X,\mathbb{C})$ and $\overline{\partial} : \Omega^{p,q}_B(X,\mathbb{C}) \to \Omega^{p,q+1}_B(X,\mathbb{C})$. Integrability of $I$ ensures that $\partial^2 = \overline{\partial}^2 = 0$. If $E$ is a foliated complex vector bundle we can also define $\Omega^{p,q}_B(X,E)$, the space of basic $(p,q)$-form valued sections of $E$.

\begin{definition}
Let $E$ be a foliated complex vector bundle. A basic $\overline{\partial}$-connection on $E$ is a first order basic differential operator $\overline{\partial}_E : \Omega^0_B(X,E) \to \Omega^{0,1}_B(X,E)$ satisfying $\overline{\partial}_E(fs) = \overline{\partial}(f) s + f \overline{\partial}_E(s)$ for all local basic sections $s$ and local basic functions $f$. We extend $\overline{\partial}_E$ to a basic differential operator $\overline{\partial}_E : \Omega^{p,q}_B(X,E) \to \Omega^{p,q+1}_B(X,E)$ in the usual way. We say that $\overline{\partial}_E$ is {\em integrable} if $\overline{\partial}_E^2 = 0$. A transverse holomorphic structure on $E$ is by definition an integrable transverse $\overline{\partial}$-connection. A basic section of $E$ is said to be {\em holomorphic} if $\overline{\partial}_E s = 0$.
\end{definition}
Let $(E , \overline{\partial}_E)$ be a transverse holomorphic vector bundle. The integrability condition $\overline{\partial}_E^2 = 0$ implies that locally $E$ admits a local frame of basic holomorphic sections. With respect to such frames, the transition functions $\{ g_{\alpha \beta } \}$ are basic holomorphic $GL(r , \mathbb{C})$-valued functions (where $r$ is the rank of $E$). Conversely, given a cocycle $\{ g_{\alpha \beta} \}$ of $GL(r,\mathbb{C})$-valued basic holomorphic functions, the associated vector bundle $E$ has a natural transverse holomorphic structure.\\

Many of the constructions one can do with holomorphic vector bundles have counterparts in the transverse setting. For instance, suppose that $E$ is a transverse vector bundle with transverse Hermitian metric and basic unitary connection $\nabla_E$. Then we can decompose $\nabla_E$ into its $(1,0)$ and $(0,1)$-parts $\nabla_E = \partial_E + \overline{\partial}_E$, when acting on basic sections. Then $\overline{\partial}_E$ is a basic $\overline{\partial}$-operator. Moreover, $\overline{\partial}_E$ is integrable if and only if the curvature of $\nabla_E$ has type $(1,1)$. In the other direction, if $(E , \overline{\partial}_E)$ is a transverse holomorphic vector bundle which admits a transverse Hermitian metric, then we can complete $\overline{\partial}_E$ to a uniquely determined basic unitary connection $\nabla_E = \partial_E + \overline{\partial}_E$, which we call the {\em Chern connection} associated to $\overline{\partial}_E$. Lastly, if $E$ is a transverse holomorphic vector bundle which admits a transverse Hermitian metric, then we can take adjoints of the operators $d_E , \partial_E , \overline{\partial}_E$ and form their associated Laplacians $\Delta_{d_E} = d^*_E d_E + d_E d^*_E$, etc.

\begin{definition}
Let $E$ be a transverse holomorphic vector bundle equipped with a transverse Hermitian metric $h$. The {\em $P$-operator} associated to $(E, h)$, denoted $P_E$ is given by
\[
P_E : \Omega^0_B(E) \to \Omega^0_B(E), \quad P_E = i \Lambda \overline{\partial}_E \partial_E.
\]
When $E$ is the trivial line bundle equipped with the standard Hermitian metric we write $P$ instead of $P_E$.
\end{definition}

\begin{lemma}\label{lem:ind0}
Let $E$ be a transverse holomorphic vector bundle equipped with a transverse Hermitian metric $h$. Then $P_E , P^*_E$ are transverse elliptic operators of index zero, i.e. $dim(Ker(P_E)) = dim( Ker(P_E^*) )$.
\end{lemma}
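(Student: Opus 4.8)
The plan is to first establish transverse ellipticity of $P_E$ by a principal symbol computation, then invoke the Fredholm property from the theory of transversally elliptic operators, and finally deduce that the index vanishes by deforming $P_E$ to a manifestly self-adjoint operator carrying the same symbol.

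First I would compute the transverse principal symbol $\sigma(P_E)(\xi)$ for $0 \neq \xi \in H^*$. Working in a local foliated chart adapted to $I$, the symbols of $\partial_E$ and $\overline{\partial}_E$ are $s \mapsto \xi^{1,0}\wedge s$ and $\alpha \mapsto \xi^{0,1}\wedge\alpha$ respectively, so that $\sigma(\overline{\partial}_E\partial_E)(\xi)(s) = \xi^{0,1}\wedge\xi^{1,0}\wedge s$. Applying $i\Lambda$ and using the standard pointwise identity $i\Lambda(\xi^{0,1}\wedge\xi^{1,0}) = c\,|\xi|^2$ for a fixed positive constant $c$ depending only on the normalisation of $\omega$, I obtain
\[
\sigma(P_E)(\xi) = c\,|\xi|^2\, id_E.
\]
Since this is invertible for every $\xi \neq 0$, the operator $P_E$ is transverse elliptic; and since the symbol of the formal adjoint is the adjoint of the symbol, $P_E^*$ is transverse elliptic as well, with the same real, positive symbol.

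Having established transverse ellipticity, the Fredholm theorem for transversally elliptic basic operators \cite{elk} guarantees that $\ker(P_E)$ and $\ker(P_E^*)$ are finite-dimensional. As $\ker(P_E^*)$ is naturally identified with the cokernel of $P_E$, the asserted equality $\dim\ker(P_E) = \dim\ker(P_E^*)$ is precisely the statement that $\mathrm{index}(P_E) = 0$. To prove this, I would exploit the fact that the index of a transversally elliptic operator depends only on the homotopy class of its principal symbol. Consider the operator $\overline{\partial}_E^*\overline{\partial}_E : \Omega^0_B(E) \to \Omega^0_B(E)$, which is self-adjoint and nonnegative, hence of index zero, and whose transverse symbol is $\tfrac{1}{2}|\xi|^2\, id_E$. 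The straight-line family
\[
P_t = t\,P_E + (1-t)\,\overline{\partial}_E^*\overline{\partial}_E, \qquad t \in [0,1],
\]
has symbol $\bigl(tc + (1-t)\tfrac{1}{2}\bigr)|\xi|^2\, id_E$, which is positive-definite for every $t$. Thus each $P_t$ is transverse elliptic and, by \cite{elk}, the $P_t$ form a continuous family of Fredholm operators between the relevant basic Sobolev spaces. Since the index is constant along such a family, $\mathrm{index}(P_E) = \mathrm{index}(\overline{\partial}_E^*\overline{\partial}_E) = 0$, giving the claim.

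The main obstacle I anticipate is justifying that homotopy invariance of the index holds in the basic, transversally elliptic setting: that the $P_t$ genuinely constitute a continuous family of Fredholm maps $L^{p,k+1}_B(E) \to L^{p,k-1}_B(E)$ and that the index of such a family is locally constant. This requires the basic elliptic regularity and Fredholm package of Section \ref{sec:sobolev} together with the corresponding results of \cite{elk}; by contrast, the symbol computation itself, including the positivity of the constant $c$, is routine once the conventions for $\omega$ and $\Lambda$ are fixed.
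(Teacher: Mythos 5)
Your proof is correct, and it follows the paper's strategy up to the last step: compute the transverse symbol of $P_E$, note that it is a positive multiple of the symbol of a self-adjoint Laplacian, and conclude via Fredholm theory. Where you diverge is in the invariance principle used for the index. The paper observes that $\sigma(P_E)$ \emph{coincides} with the symbol of $\Delta_{\partial_E}$, so the difference $P_E - \Delta_{\partial_E}$ is a first order operator, hence compact as a map $L^{2,2}_B(E) \to L^2_B(E)$ by basic Sobolev compactness; invariance of the Fredholm index under compact perturbations then gives $\mathrm{index}(P_E) = \mathrm{index}(\Delta_{\partial_E}) = 0$. You instead deform $P_E$ to $\overline{\partial}_E^*\overline{\partial}_E$ along a straight line and invoke local constancy of the index on norm-continuous families of Fredholm operators. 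Your route is marginally more robust, in that it would still work if your constant $c$ differed from the constant $\tfrac12$ appearing in $\sigma(\overline{\partial}_E^*\overline{\partial}_E)$, whereas the compact-perturbation argument needs the leading symbols to agree exactly. But it carries two extra burdens: you must know $c > 0$ (with the opposite sign convention the family of symbols $\bigl(tc + (1-t)\tfrac12\bigr)|\xi|^2\,id_E$ would vanish at an interior $t$, destroying transverse ellipticity along the path), and you must supply the homotopy-invariance statement for Fredholm families in the basic Sobolev setting, which is standard for bounded Fredholm operators between fixed Banach spaces but is not among the tools the paper establishes. In fact the K\"ahler-type identities force $c = \tfrac12$, so your two comparison operators already agree to first order, and your homotopy could be collapsed to the paper's one-line compact-perturbation argument, which rests only on results proven in the paper (basic Sobolev compactness and the Fredholm theorem of \cite{elk}).
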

\begin{proof}
A direct computation shows that the symbol of $P_E$ coincides with the symbol of $\Delta_{\partial_E}$. It follows that $P_E$ is transverse elliptic and that $P_E$ and $\Delta_{\partial_E}$ differ by a first order operator. Now consider $P_E, \Delta_{\partial_E}$ as Fredholm operators $L^{2,2}_B(E) \to L^2_B(E)$. The difference is a first order differential operator, which by basic Sobolev compactness is compact as an operator from $L^{2,2}_B(E)$ to $L^2_B(E)$. The Fredholm operators $P_E$ and $\Delta_{\partial_E}$ differ by a compact operator, so they have the same index. But $\Delta_{\partial_E}$ is self-adjoint, so it has index zero and hence $P_E$ has index zero as well.
\end{proof}

\begin{lemma}[\cite{lt} Lemma 7.2.4]\label{lem:pstar}
Let $E$ be a transverse holomorphic vector bundle equipped with a transverse Hermitian metric $h$. The adjoint $P_E^*$ of the $P$-operator $P_E$ is given by
\[
P_E^* = \frac{i}{(n-1)!} *_B \overline{\partial}_E \partial_E L^{n-1},
\]
where $*_B$ denotes the basic Hodge star $*_B : \wedge^j H^* \to \wedge^{2n-j} H^*$.
\end{lemma}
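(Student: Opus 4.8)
The plan is to verify the defining property of the formal adjoint directly: for all basic sections $s,t$ of $E$ one must show $\langle P_E s, t\rangle = \langle s, \frac{i}{(n-1)!} *_B \overline{\partial}_E \partial_E L^{n-1} t\rangle$, where $\langle a,b\rangle = \int_X h(a,b)\, vol_T \wedge \chi$ and the transverse volume form is $vol_T = \omega^n/n!$. The computation is the transverse analogue of \cite{lt} Lemma 7.2.4, and the key observation is that every operator appearing ($L$, $\Lambda$, $*_B$, $\partial_E$, $\overline{\partial}_E$) is a basic operator, while the only pointwise algebraic input is the Lefschetz identity $(\Lambda \beta)\, \omega^n/n! = \beta \wedge \omega^{n-1}/(n-1)!$ for basic $(1,1)$-forms $\beta$; this is a statement about the transverse Hermitian structure on $H$ and is identical to its classical counterpart. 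Applying it to $\beta = h(\overline{\partial}_E \partial_E s, t)$, and using that $\Lambda$ commutes with the pointwise pairing $h(\,\cdot\,, t)$, rewrites $\langle P_E s, t\rangle = i \int_X h(\overline{\partial}_E \partial_E s, t) \wedge \frac{\omega^{n-1}}{(n-1)!} \wedge \chi$, so that the contraction $\Lambda$ has been traded for a wedge with $\frac{1}{(n-1)!} L^{n-1}$.

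First I would integrate by parts twice, transferring $\overline{\partial}_E$ and then $\partial_E$ off $s$ and onto the $t$-slot. For the first step, consider the basic $(n,n-1)$-form $\psi = i\, h(\partial_E s, t) \wedge \frac{\omega^{n-1}}{(n-1)!}$; since the transverse complex codimension is $n$ there are no basic $(n+1,\bullet)$-forms, so $d_B \psi = \overline{\partial}\psi$, and by the basic Stokes' theorem (\ref{equ:basicstokes}) we get $\int_X \overline{\partial}\psi \wedge \chi = 0$. Expanding $\overline{\partial}\psi$ with the Leibniz rule and the metric-compatibility of the Chern connection re-expresses the integral with $\partial_E$ still on $s$; a second, analogous integration by parts against a basic $(n-1,n)$-form (where instead $d_B = \partial$) then moves $\partial_E$ across as well. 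Re-packaging the resulting top-degree $(n,n)$-form-valued sections through the basic Hodge star $*_B$ produces exactly $\frac{i}{(n-1)!} *_B \overline{\partial}_E \partial_E L^{n-1} t$ in the second argument. Crucially, the basic Stokes' theorem -- and hence every integration by parts here -- is available precisely because the foliation is taut; tautness is also what guarantees the existence of the formal adjoint in the first place.

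The principal difficulty is bookkeeping rather than conceptual. Two sources of terms must be tracked with care. First, the conjugations coming from $h$ being conjugate-linear in its second slot, together with the scalar factor $i$ and the Hodge-star duality relating $h(a, *_B b)\, vol_T$ to a wedge product, must be followed through to land on the precise constant $\frac{i}{(n-1)!}$ and the correct sign. Second, because the transverse metric is only Hermitian and not necessarily Kähler, each integration by parts generates \emph{torsion terms} involving $\partial \omega^{n-1}$ and $\overline{\partial}\omega^{n-1}$ (equivalently the commutators $[\partial_E, L]$ and $[\overline{\partial}_E, L]$). These must be matched against the corresponding contributions produced when $\overline{\partial}_E \partial_E$ is applied through $L^{n-1}$ on the $t$-side; should this matching prove delicate, one may first reduce to a transverse Gauduchon representative of the metric (available after rescaling), which simplifies the relevant torsion expressions. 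Throughout, the one genuinely foliated ingredient is tautness, which both provides the formal adjoint and legitimises every integration by parts via (\ref{equ:basicstokes}).
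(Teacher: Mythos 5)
Your proof is correct and is essentially the same as the paper's: the paper's proof simply notes that, in view of the discussion in Section \ref{sec:riemfol} on adjoints of basic operators over taut foliations, the computation of \cite[Lemma 7.2.4]{lt} carries over verbatim, and the computation you spell out --- trading $\Lambda$ for wedging with $\omega^{n-1}/(n-1)!$ via the pointwise Lefschetz identity, two integrations by parts justified by the basic Stokes' theorem (\ref{equ:basicstokes}) (hence by tautness), then repackaging through $*_B$ --- is exactly that computation. The ``matching of torsion terms'' you worry about is in fact automatic: expanding $\overline{\partial}_E \partial_E\left(t \otimes \omega^{n-1}\right)$ by the Leibniz rule produces precisely the $\partial\omega^{n-1}$, $\overline{\partial}\omega^{n-1}$ and $\overline{\partial}\partial\omega^{n-1}$ contributions that your integrations by parts generate, so no separate argument is needed and nothing about the metric beyond being transverse Hermitian is used.

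One caution: the fallback you propose, reducing to a transverse Gauduchon representative, must be discarded. First, the lemma concerns a fixed transverse Hermitian metric, and $L$, $\Lambda$, $*_B$ and $P_E$ all change under conformal rescaling, so proving the identity for a Gauduchon representative says nothing about the original metric. Second, and decisively, the paper's proof of existence of transverse Gauduchon metrics (Theorem \ref{thm:trgauduchon}) itself relies on Lemma \ref{lem:pstar} through the identity $Q = -(n-1)!\,P^*$, so invoking Gauduchon rescaling here would be circular. Since the fallback is never actually needed, this does not affect the validity of your main argument.
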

\begin{proof}
Bearing in mind the discussion in Section \ref{sec:riemfol} on computing the adjoint of a basic differential operator in the case of a taut Riemannian foliation, the computation of the adjoint $P_E^*$ proceeds exactly as in the non-foliated setting \cite[Lemma 7.2.4]{lt}.
\end{proof}

\begin{lemma}\label{lem:kerimp}
For the trivial line bundle, we have $Ker(P) = \mathbb{C}$ and $Im( P|_{\mathcal{C}^\infty_B(X, \mathbb{R})} )$ contains no basic functions of constant sign other than the zero function.
\end{lemma}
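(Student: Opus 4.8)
The plan is to handle the two assertions separately, using the index computation of Lemma \ref{lem:ind0} and the explicit adjoint of Lemma \ref{lem:pstar} to reduce the image statement to a positivity property of $Ker(P^*)$. First, for the kernel: since $\partial 1 = 0$ we have $P(1) = i\Lambda \overline{\partial}\partial 1 = 0$, so $\mathbb{C} \subseteq Ker(P)$. For the reverse inclusion I would invoke a transverse maximum principle. By Lemma \ref{lem:ind0}, $P$ is transverse elliptic of second order and shares the symbol of $\Delta_\partial$; moreover it has no zeroth-order term, since it annihilates constants. In a local foliated chart $U = V \times W$ with $V \subseteq \mathbb{C}^n$ a basic function depends only on the transverse coordinates, and $Pf = 0$ becomes a genuine homogeneous second-order elliptic equation on $V$ with positive-definite leading part and vanishing zeroth-order term. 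The strong maximum principle forbids an interior extremum unless $f$ is locally constant; as $X$ is compact and connected, a basic $f \in Ker(P)$ attains a global maximum, and propagating the local constancy gives $f$ constant. Hence $Ker(P) = \mathbb{C}$ is one-dimensional.

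The crux is the next step: producing a strictly positive generator of $Ker(P^*)$. By Lemma \ref{lem:ind0}, $P$ has index zero, so $\dim Ker(P^*) = \dim Ker(P) = 1$; let $h_0$ be a real generator, which is basic and smooth by basic elliptic regularity (Lemma \ref{lem:trreg}). The operator $P^*$ is transverse elliptic by Lemma \ref{lem:pstar}, but in contrast to $P$ it carries a nonzero zeroth-order term, arising from $*_B \overline{\partial}\partial \omega^{n-1}$, unless the metric is transverse Gauduchon; so I cannot simply assert that constants lie in $Ker(P^*)$. The \emph{main obstacle} is precisely this loss of the maximum principle in its naive form. I would instead invoke the Perron--Frobenius/Krein--Rutman theory of the principal eigenvalue of a second-order elliptic operator, applied transversally in each foliated chart where basic objects descend to functions on $V \subseteq \mathbb{C}^n$. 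The operator $P$ realises its principal eigenvalue $0$ via the strictly positive eigenfunction $1$; principal eigenvalues are simple and the adjoint operator shares the principal eigenvalue with a strictly positive eigenfunction. This yields a strictly positive $h_0$ spanning $Ker(P^*)$.

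Finally, for the image, let $g = Pf$ with $f \in \mathcal{C}^\infty_B(X,\mathbb{R})$. Pairing against $h_0$ with respect to $vol_X = vol_T \wedge \chi$ and using the defining property of the formal adjoint (valid by tautness, as discussed in Section \ref{sec:riemfol}) gives
\[
\int_X g\, h_0\, vol_X = \langle Pf , h_0 \rangle = \langle f , P^* h_0 \rangle = 0 .
\]
If $g$ has constant sign, then $g h_0$ has constant sign since $h_0 > 0$, yet has vanishing integral, forcing $g h_0 \equiv 0$ and hence $g \equiv 0$. Therefore $Im\big(P|_{\mathcal{C}^\infty_B(X,\mathbb{R})}\big)$ contains no basic function of constant sign other than zero, which completes the proof.
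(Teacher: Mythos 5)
Your first part ($Ker(P)=\mathbb{C}$ via the strong maximum principle in a foliated chart) is correct and is exactly the paper's argument. The gap is in the second part. The crucial input of your duality argument --- a \emph{strictly positive} smooth basic generator $h_0$ of $Ker(P^*)$ --- is not established by the reasoning you give. Krein--Rutman/principal-eigenvalue theory is a global spectral statement; it cannot be ``applied transversally in each foliated chart,'' since on a chart $V \subseteq \mathbb{C}^n$ the principal eigenvalue and eigenfunction depend on a choice of boundary conditions, and the locally produced eigenfunctions have no reason to patch into a global basic function (the leaf space is not a manifold, and leaves may be dense, as in Example \ref{ex:uhlenbeckcomp}). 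A genuinely global Krein--Rutman argument would have to be run on spaces of basic functions on $X$ itself, using compactness of the resolvent of the transversally elliptic operator and a positivity-improving property; even then, the adjoint eigenvector it produces is a priori only a positive functional (a measure), and upgrading it to a smooth, \emph{strictly} positive basic function requires basic elliptic regularity plus a Harnack/strong-maximum-principle argument for $P^*$, which has a zeroth-order term --- none of this is carried out. Note also that $h_0 \ge 0$ with zeros would not suffice for your final step, since $\int_X g h_0 = 0$ then only forces $g$ to vanish on $\{h_0>0\}$. Finally, your route inverts the paper's logical order: in the paper the constant-sign property of $Ker(P^*)$ (Corollary \ref{cor:ppstar}) and the strict positivity of its generator (proof of Theorem \ref{thm:trgauduchon}) are \emph{consequences} of this lemma, so any appeal to them here would be circular, and your Krein--Rutman substitute does not close that circle.

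The obstacle you identify (``loss of the maximum principle'' because $P^*$ has a zeroth-order term) is a red herring: no maximum principle for $P^*$ is needed. The image statement follows from the same maximum principle you already used for the kernel, applied to $P$ itself. If $g = Pf$ with $f \in \mathcal{C}^\infty_B(X,\mathbb{R})$ and $g$ has constant sign, say $g \ge 0$, then $Pf \ge 0$ everywhere; since $P$ is transversally elliptic with no zeroth-order term and $f$ is basic, the strong maximum principle in a foliated chart around a point where the compact manifold $X$ attains $\max f$ shows the set $\{f = \max f\}$ is open and closed, so $f$ is constant and $g = Pf = 0$ (the case $g \le 0$ follows by replacing $f$ with $-f$). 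This one-paragraph argument is the paper's proof, following \cite[Lemma 7.2.7]{lt}.
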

\begin{proof}
Both statements follow easily from the maximum principle, as shown in \cite[Lemma 7.2.7]{lt}.
\end{proof}

\begin{corollary}\label{cor:ppstar}
\leavevmode
\begin{itemize}
\item[(i)]{We have $dim(Ker(P^*)) = 1$ and every function $f \in Ker( P^*|_{\mathcal{C}^\infty_B(X,\mathbb{R})})$ has constant sign.}
\item[(ii)]{We have a direct sum decomposition $\mathcal{C}^\infty_B(X,\mathbb{R}) = Im(P|_{\mathcal{C}^\infty_B(X,\mathbb{R})}) \oplus \mathbb{R}$, where the second summand denotes constant real valued functions.}
\end{itemize}
\end{corollary}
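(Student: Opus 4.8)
The plan is to deduce everything from three ingredients already in hand: the index-zero property (Lemma~\ref{lem:ind0}), the maximum-principle input (Lemma~\ref{lem:kerimp}), and the Fredholm/regularity theory for transverse elliptic operators. The first observation I would record is that $P = i\Lambda\overline{\partial}\partial$ is a \emph{real} operator. Indeed, for a real basic function $f$ one has $\overline{\overline{\partial}\partial f} = \partial\overline{\partial}f = -\overline{\partial}\partial f$, whence $\overline{Pf} = -i\Lambda(-\overline{\partial}\partial f) = Pf$; so $Pf$ is real whenever $f$ is, and consequently so is $P^*$. It follows that $Ker(P^*)$ is the complexification of $Ker(P^*|_{\mathcal{C}^\infty_B(X,\mathbb{R})})$, and I may work entirely with real basic functions.

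For part (i) the dimension count is essentially free: Lemma~\ref{lem:kerimp} gives $Ker(P) = \mathbb{C}$, so $dim(Ker(P)) = 1$, and index zero (Lemma~\ref{lem:ind0}) then forces $dim(Ker(P^*)) = 1$. Fix a real generator $g$ of $Ker(P^*|_{\mathcal{C}^\infty_B(X,\mathbb{R})})$. The next step is to pin down the image of $P$ at the smooth level. Using that $P$ is Fredholm as a transverse elliptic operator together with basic elliptic regularity (Lemma~\ref{lem:trreg}), the image $Im\big(P|_{\mathcal{C}^\infty_B(X,\mathbb{R})}\big)$ is closed of real codimension $dim(Ker(P^*)) = 1$ and equals the $L^2$-orthogonal complement of $g$; equivalently, writing $\ell(u) = \langle u, g\rangle$, I claim
\[
Im\big(P|_{\mathcal{C}^\infty_B(X,\mathbb{R})}\big) = Ker(\ell) = \{\, u \in \mathcal{C}^\infty_B(X,\mathbb{R}) : \langle u, g\rangle = 0 \,\}.
\]
The inclusion $\subseteq$ is the adjoint identity $\langle Pf, g\rangle = \langle f, P^*g\rangle = 0$, and equality holds because both sides have real codimension one.

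The constant sign of $g$ is the main obstacle, and I would clear it with a convexity argument rather than a direct maximum-principle estimate. Let $C \subset \mathcal{C}^\infty_B(X,\mathbb{R})$ be the open convex cone of everywhere-positive basic functions. By Lemma~\ref{lem:kerimp}, $Im(P)$ contains no nonzero function of constant sign, so $C \cap Ker(\ell) = C \cap Im(P) = \emptyset$. Since $C$ is connected and $\ell$ is continuous and nowhere zero on $C$, the functional $\ell$ has constant sign on $C$; after replacing $g$ by $-g$ I may assume $\langle u, g\rangle > 0$ for all $u > 0$. Testing against basic bump functions supported in a single foliated chart (which exist, even though global basic partitions of unity need not) forces $g \ge 0$ pointwise, and since $g \not\equiv 0$ this shows $g$ has constant sign, completing (i).

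Finally, part (ii) is then immediate. With $g \ge 0$ and $g \not\equiv 0$ we have $\langle 1, g\rangle = \int_X g\, > 0$, so the constant function $1$ does not lie in $Ker(\ell) = Im(P)$; alternatively, any nonzero constant is of constant sign and is excluded from $Im(P)$ by Lemma~\ref{lem:kerimp} directly. Since $Im\big(P|_{\mathcal{C}^\infty_B(X,\mathbb{R})}\big)$ has codimension one and the line $\mathbb{R}\cdot 1$ meets it only in $0$, I obtain the direct sum decomposition
\[
\mathcal{C}^\infty_B(X,\mathbb{R}) = Im\big(P|_{\mathcal{C}^\infty_B(X,\mathbb{R})}\big) \oplus \mathbb{R}.
\]
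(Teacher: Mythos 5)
Your dimension count, the reality of $P$, the identification $Im\big(P|_{\mathcal{C}^\infty_B(X,\mathbb{R})}\big) = Ker(\ell)$ via Fredholm theory and basic regularity, and the convexity argument showing that $\ell$ has constant sign on the cone $C$ of everywhere-positive basic functions are all correct (the cone framing is a genuinely different and rather clean route compared to the paper's direct contradiction argument). The gap is in the very last step of (i): \emph{basic bump functions supported in a single foliated chart do not exist in general}. A function on a chart $U = V\times W$ depending only on the $V$-variable is basic on $U$, but its extension by zero to $X$ is basic only if it is constant along \emph{global} leaves, and leaves typically exit the chart and return elsewhere. For any foliation with a dense leaf --- for instance the irrational linear foliation on $T^3$ of Example~\ref{ex:uhlenbeckcomp}, which is transverse K\"ahler and so squarely within the scope of this corollary, or an irregular Sasakian structure --- every continuous basic function on $X$ is constant, so there is no nonzero basic function supported in a chart. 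This is exactly the failure of basic partitions of unity that the paper stresses throughout: one cannot localize in the foliated setting, so your passage from ``$\langle u, g\rangle > 0$ for all everywhere-positive basic $u$'' to ``$g \ge 0$ pointwise'' is unjustified as written.

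The missing idea, which is the heart of the paper's own proof, is that \emph{compositions of basic functions with smooth functions $\mathbb{R}\to\mathbb{R}$ are basic}: if $\xi(g)=0$ then $\xi(\varphi\circ g) = \varphi'(g)\,\xi(g) = 0$. The paper uses this to manufacture test functions from the kernel element itself (setting $g_\pm = \varphi_\pm\circ f$ for cutoffs $\varphi_\pm$ on $\mathbb{R}$) and then derives a contradiction with Lemma~\ref{lem:kerimp}. Your cone argument can be repaired the same way: if $g(x_0)<0$, choose $\varphi\ge 0$ smooth, supported on the negative real axis, with $\varphi \equiv 1$ near $g(x_0)$, and test $\ell$ on the positive basic functions $u_\epsilon = \varphi\circ g + \epsilon$. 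Then
\[
\ell(u_\epsilon) = \int_X (\varphi\circ g)\, g \, dvol_X + \epsilon \int_X g \, dvol_X,
\]
and since $(\varphi\circ g)\,g \le 0$ everywhere and is strictly negative near $x_0$, letting $\epsilon \to 0$ gives $\ell(u_\epsilon) < 0$ for small $\epsilon$, contradicting $\ell > 0$ on $C$. With this substitution your proof of (i) closes; your part (ii) is fine as written, since its alternative justification (nonzero constants have constant sign, hence avoid $Im(P)$ by Lemma~\ref{lem:kerimp}) does not depend on the flawed step.
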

\begin{proof}
(i) From Lemmas \ref{lem:ind0} and \ref{lem:kerimp}, we have $dim(Ker(P^*)) = dim(Ker(P)) = 1$. Now suppose that $f \in Ker( P^*|_{\mathcal{C}^\infty_B(X,\mathbb{R})} )$ is positive at some points and negative at some other points. So there exists an interval $[a_+ , b_+] \subset \mathbb{R}$ with $b_+ > a_+ > 0$ for which $f^{-1}( [a_+ , b_+ ] )$ has positive measure. Let $\varphi_+ : \mathbb{R} \to \mathbb{R}$ be a smooth non-negative function with support on the positive real axis and with $\varphi|_{[a_+ , b_+]} = 1$. Let $g_+ = \varphi_+ \circ f : X \to \mathbb{R}$. Then $g_+$ is a basic smooth function on $X$, $g_+ \ge 0$ everywhere and $I_+ = \int_X f \cdot g_+ dvol_X > 0$. Similarly, since $f$ is negative somewhere, we can find a smooth non-negative function $\varphi_- : \mathbb{R} \to \mathbb{R}$ with support on the negative real axis such that $g_- = \varphi_- \circ f : X \to \mathbb{R}$ is a basic smooth function on $X$, $g_- \ge 0$ and $I_- = \int_X f \cdot g_- dvol_X < 0$. Now setting $g = I_+ g_- -I_- g_+$ we have that $g$ is a basic smooth function, $g \ge 0$ everywhere and $\int_X f \cdot g dvol_X = 0$. Since $Ker(P^*|_{\mathcal{C}^\infty_B(X,\mathbb{R})}  )$ is $1$-dimensional, this shows that $g \in Ker(P^*)^\perp = Im(P|_{\mathcal{C}^\infty_B(X,\mathbb{R})} )$. But $g$ has constant sign, so this contradicts Lemma \ref{lem:kerimp}. Lemma \ref{lem:kerimp} also implies that $Im(P|_{\mathcal{C}^\infty_B(X,\mathbb{R})} ) \cap \mathbb{R} = \{ 0 \}$ and this implies (ii), since $dim( Coker(P)) = 1$.
\end{proof}

\begin{definition}\label{def:trgauduchon}
Let $X$ be compact oriented and transverse Hermitian foliated. Let $g$ be the transverse Hermitian metric on $X$. We say $g$ is {\em (transverse) Gauduchon} if $\partial \overline{\partial} (\omega^{n-1}) = 0$.
\end{definition}

\begin{theorem}\label{thm:trgauduchon}
Let $X$ be a compact oriented, taut, transverse Hermitian foliated manifold of complex codimension $n$ and let $g$ be the transverse Hermitian metric on $X$. Then $g$ can be conformally rescaled by a basic positive real valued smooth function such that the rescaled transverse metric $g_0$ is Gauduchon. If $X$ connected and $n \ge 2$, then $g_0$ is the unique transverse Gauduchon metric within its conformal class up to constant rescaling.
\end{theorem}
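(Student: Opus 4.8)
The plan is to translate the Gauduchon condition into a statement about the kernel of the operator $P^*$ and then invoke Corollary \ref{cor:ppstar}. Conformally rescaling the transverse metric amounts to replacing $\omega$ by $\lambda\omega$ for a positive basic smooth function $\lambda$; since $\lambda$ is basic, this preserves the transverse Hermitian structure. Writing $u = \lambda^{n-1}$ and noting that for a $0$-form $u$ one has $L^{n-1}(u) = u\,\omega^{n-1}$, the rescaled metric is Gauduchon precisely when $\partial\overline{\partial}(u\,\omega^{n-1}) = 0$. Because $\partial\overline{\partial} = -\overline{\partial}\partial$ and the basic Hodge star $*_B$ is an isomorphism, the formula $P^* = \frac{i}{(n-1)!}*_B\overline{\partial}\partial L^{n-1}$ of Lemma \ref{lem:pstar} (applied to the trivial line bundle) shows that this condition is equivalent to $P^*(u) = 0$. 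Thus the whole problem reduces to producing a positive function in $Ker(P^*)$.

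For existence I would invoke Corollary \ref{cor:ppstar}(i): the kernel of $P^*$ on real basic functions is one-dimensional and every element has constant sign. Choosing the generator $u_0$ with $u_0 > 0$ (smooth and basic by the corollary) and setting $\lambda_0 = u_0^{1/(n-1)}$ produces a positive basic smooth function, and $\omega_0 = \lambda_0\,\omega$ then satisfies $\partial\overline{\partial}(\omega_0^{n-1}) = 0$, so $g_0$ is transverse Gauduchon. When $n = 1$ one has $\omega^{n-1} = 1$, so every metric is trivially Gauduchon and no rescaling is required; the exponent $1/(n-1)$ is the only place where $n \ge 2$ enters even for existence.

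For uniqueness, assume $X$ is connected and $n \ge 2$. Any transverse Gauduchon metric in the conformal class of $g$ has the form $\lambda\omega$ with $\lambda$ positive basic, and by the reduction above $\lambda^{n-1} \in Ker(P^*)$, where $P^*$ is formed from the fixed base metric $g$. Since $\dim(Ker(P^*)) = 1$ by Corollary \ref{cor:ppstar}(i), we obtain $\lambda^{n-1} = c\,\lambda_0^{n-1}$ for some real constant $c$; both sides being positive forces $c > 0$, whence $\lambda = c^{1/(n-1)}\lambda_0$ and the metric equals $c^{1/(n-1)}\omega_0$, which is exactly a constant rescaling of $g_0$.

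The genuinely hard work has already been carried out in the preceding results: the adjoint formula of Lemma \ref{lem:pstar}, the index-zero property of Lemma \ref{lem:ind0}, and the maximum-principle input of Lemma \ref{lem:kerimp} that feeds Corollary \ref{cor:ppstar}. Given those, the only points needing genuine care are the correct bookkeeping of the signs and of the $*_B$ isomorphism when identifying the Gauduchon condition with the equation $u \in Ker(P^*)$, and the observation that the substitution $u = \lambda^{n-1}$ is a bijection of positive basic functions precisely when $n \ge 2$, which is what allows uniqueness at the linear level for $u$ to descend to uniqueness up to constant scaling for the metric itself.
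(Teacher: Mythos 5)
Your reduction of the Gauduchon condition to the equation $P^*(u)=0$ via Lemma \ref{lem:pstar}, and your use of Corollary \ref{cor:ppstar} for one-dimensionality of the kernel, follow exactly the route the paper takes (the paper phrases it through the operator $Q(\varphi) = i *_B \partial\overline{\partial}(\omega^{n-1}\varphi) = -(n-1)!\,P^*(\varphi)$, which is the same thing). Your uniqueness argument is also the paper's. However, there is a genuine gap at the crucial existence step: you write ``choosing the generator $u_0$ with $u_0 > 0$ (smooth and basic by the corollary)'', but Corollary \ref{cor:ppstar}(i) does \emph{not} give strict positivity. Its conclusion is that every element of $Ker(P^*|_{\mathcal{C}^\infty_B(X,\mathbb{R})})$ has \emph{constant sign}, which in the sense established by its proof means only that the function cannot be positive at some points and negative at others; after replacing $u_0$ by $-u_0$ if necessary, you get $u_0 \ge 0$, and the corollary says nothing to exclude zeros of $u_0$.

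This matters because the whole construction hinges on non-vanishing: if $u_0$ vanishes somewhere, then $\lambda_0 = u_0^{1/(n-1)}$ is in general not smooth at the zero locus, and in any case $\lambda_0\,g$ degenerates there, so you do not obtain a transverse Hermitian metric at all. The paper flags exactly this point: after choosing the spanning element $\varphi_0 \ge 0$ of $Ker(Q)$, it states ``It remains only to show that $\varphi_0$ is non-vanishing'' and settles it by a separate strong maximum principle argument for the second-order (transversally) elliptic operator, following the proof of Theorem 1.2.4 in L\"ubke--Teleman: a non-negative, not identically zero solution of the homogeneous equation cannot attain an interior zero on a connected manifold. You need to supply this step (connectedness of $X$ enters here too, not only in the uniqueness part); with it added, your proof is complete and coincides with the paper's.
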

\begin{proof}
This proof adapts \cite[Theorem 1.2.4]{lt} to the foliated setting. If $n=1$ there is nothing to show, so assume $n \ge 2$. Consider the following second order transverse elliptic differential operator
\[
Q : \mathcal{C}^\infty_B(X) \to \mathcal{C}^\infty_B(X), \quad Q(\varphi) = i *_B \partial \overline{\partial}( \omega^{n-1} \varphi ).
\]
If we can find a smooth basic function $\varphi$ satisfying $Q(\varphi) = 0$ and which is everywhere positive, then $g_0 = \varphi^{\frac{1}{n-1}}g$ will be a transverse Gauduchon metric. By Lemma \ref{lem:pstar}, we have $Q = -(n-1)! P^*$. By Corollary \ref{cor:ppstar}, $Ker(Q)$ is $1$-dimensional, so if $g_0$ exists then it is unique up to scale. Let $\varphi_0$ span $Ker(Q)$. By Corollary \ref{cor:ppstar} we may assume $\varphi_0 \ge 0$. It remains only to show that $\varphi_0$ is non-vanishing. This follows by applying the maximum principle in exactly the same manner as in the proof of \cite[Theorem 1.2.4]{lt}.
\end{proof}

\begin{definition}
Let $E$ be a transverse holomorphic vector bundle equipped with a transverse Hermitian metric $h$. Let $\nabla_E$ be the associated Chern connection. The {\em mean curvature} of $E$, denoted by $K_E \in \Omega^0_B( End(E) )$ is defined as $K_E = i \Lambda F_E$, where $F_E$ is the curvature of $\nabla_E$.
\end{definition}

From the above definition it follows that
\[
i n F_E \wedge \omega^{n-1} = K_E \omega^n.
\]

\begin{lemma}\label{lem:deltappstar}
Suppose that $X$ is a compact oriented, taut, transverse Hermitian foliation of complex codimension $n$ with transverse Gauduchon metric. Let $E$ be a transverse holomorphic vector bundle equipped with a transverse Hermitian metric $h$. Then on $\Omega^0(E)$ we have
\[
\Delta_E(a) = P_E(a) + P^*_E(a) - K_E(a).
\]
\end{lemma}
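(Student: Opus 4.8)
Throughout write $\Delta_E=\Delta_{d_E}=d_E^*d_E+d_Ed_E^*$ for the Laplacian of the Chern connection. The plan is to reduce the stated operator identity on $\Omega^0_B(E)$ to a single scalar integral identity, and then to verify the latter by integration by parts, the transverse Gauduchon condition entering only at the very last step.

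First I would note that all four operators $\Delta_E,P_E,P_E^*,K_E$ are complex-linear on $\Gamma_B(X,E)$, so by polarization it suffices to prove the associated identity of quadratic forms $a\mapsto\langle\,\cdot\,a,a\rangle$. On $0$-forms $d_E^*a=0$, and the cross terms $\partial_E^*\overline{\partial}_Ea$, $\overline{\partial}_E^*\partial_Ea$ vanish for bidegree reasons, so $\langle\Delta_Ea,a\rangle=\|\partial_Ea\|^2+\|\overline{\partial}_Ea\|^2$. Since $\langle P_E^*a,a\rangle=\overline{\langle P_Ea,a\rangle}$, and since $F_E=\overline{\partial}_E\partial_E+\partial_E\overline{\partial}_E$ is the only surviving part of $d_E^2$ (so that $\langle K_Ea,a\rangle=\langle P_Ea,a\rangle+\langle\widetilde P_Ea,a\rangle$ with $\widetilde P_E:=i\Lambda\partial_E\overline{\partial}_E$), the whole lemma reduces to computing the two numbers $\langle P_Ea,a\rangle$ and $\langle\widetilde P_Ea,a\rangle$.

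The core computation uses three ingredients: the pointwise identity $\langle i\Lambda\beta,a\rangle\,vol_T=i\,h(\beta,a)\wedge\omega^{n-1}/(n-1)!$ for a basic $(1,1)$-form $\beta$ valued in $E$ (together with $vol_X=vol_T\wedge\chi$), the Chern-connection Leibniz rules $\overline{\partial}\,h(\partial_Ea,a)=h(\overline{\partial}_E\partial_Ea,a)-h(\partial_Ea,\partial_Ea)$ and $\partial\,h(\overline{\partial}_Ea,a)=h(\partial_E\overline{\partial}_Ea,a)-h(\overline{\partial}_Ea,\overline{\partial}_Ea)$, and the basic Stokes' theorem of Section \ref{sec:riemfol}, which is valid by tautness. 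Feeding the Leibniz rules into the pointwise identity, the diagonal terms produce $\|\partial_Ea\|^2$ and $-\|\overline{\partial}_Ea\|^2$ via the standard pointwise facts $i\Lambda h(\partial_Ea,\partial_Ea)=|\partial_Ea|^2$ and $i\Lambda h(\overline{\partial}_Ea,\overline{\partial}_Ea)=-|\overline{\partial}_Ea|^2$; in the remaining terms the exact pieces $\int_X d_B(\,\cdot\wedge\omega^{n-1})\wedge\chi$ drop out by basic Stokes, leaving torsion integrals. Explicitly I expect
\[
\langle P_Ea,a\rangle=\|\partial_Ea\|^2+T_1,\qquad \langle\widetilde P_Ea,a\rangle=-\|\overline{\partial}_Ea\|^2+T_2,
\]
where $T_1=\tfrac{i}{(n-1)!}\int_X h(\partial_Ea,a)\wedge\overline{\partial}\omega^{n-1}\wedge\chi$ and $T_2=\tfrac{i}{(n-1)!}\int_X h(\overline{\partial}_Ea,a)\wedge\partial\omega^{n-1}\wedge\chi$. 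Substituting into $\langle\Delta_Ea,a\rangle=2\,\mathrm{Re}\langle P_Ea,a\rangle-\langle K_Ea,a\rangle$ and using $2\,\mathrm{Re}(T_1)-T_1=\overline{T_1}$, the target identity collapses to the single requirement $\overline{T_1}=T_2$.

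The last step is to establish $\overline{T_1}=T_2$, and this is where tautness and the Gauduchon condition combine. Conjugating $T_1$ (using that $\omega$ is real, that $\overline{h(\partial_Ea,a)}=h(a,\partial_Ea)$ and $\overline{\overline{\partial}\omega^{n-1}}=\partial\omega^{n-1}$) and invoking $\overline{\partial}|a|^2=h(\overline{\partial}_Ea,a)+h(a,\partial_Ea)$, one finds
\[
\overline{T_1}-T_2=\frac{-i}{(n-1)!}\int_X\overline{\partial}\big(|a|^2\big)\wedge\partial\omega^{n-1}\wedge\chi.
\]
Because $a$ is basic and $h$ is a transverse Hermitian metric, $|a|^2=h(a,a)$ is a \emph{basic} function, so $|a|^2\,\partial\omega^{n-1}$ is a basic $(2n-1)$-form; integrating by parts once more via basic Stokes (the $\partial$-part vanishing for bidegree reasons) converts the integrand into a constant multiple of $|a|^2\,\partial\overline{\partial}\omega^{n-1}\wedge\chi$, which vanishes identically by the Gauduchon condition $\partial\overline{\partial}(\omega^{n-1})=0$. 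Hence $\overline{T_1}=T_2$, and the lemma follows. The main obstacle is the disciplined bookkeeping of bidegrees, signs and the two torsion integrals in the transverse setting; the conceptual point worth flagging is that the Gauduchon hypothesis is genuinely used here, so the identity is not expected to hold for an arbitrary transverse Hermitian metric.
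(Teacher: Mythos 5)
Your proof is correct, and it takes a genuinely different route from the paper's. The paper's proof is deliberately \emph{local}: following \cite[Lemma 7.2.5]{lt}, one starts from the explicit formula $P_E^* = \frac{i}{(n-1)!}*_B\overline{\partial}_E\partial_E L^{n-1}$ of Lemma \ref{lem:pstar}, expands $\overline{\partial}_E\partial_E(\omega^{n-1}a)$ by the Leibniz rule, uses the Gauduchon condition pointwise to discard the term $a\,\overline{\partial}\partial\omega^{n-1}$, and identifies the surviving terms with $\Delta_E a$, $P_E a$, $K_E a$ by Hermitian linear algebra; the global input (tautness, basic Stokes) is entirely concentrated in the earlier computation of the formal adjoint, and the foliated adaptation is immediate precisely because pointwise computations transfer verbatim to the transverse setting. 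You never invoke Lemma \ref{lem:pstar}: you take only the defining property $\langle P_E^*a,a\rangle=\overline{\langle P_E a,a\rangle}$ of the adjoint, reduce the operator identity to an identity of quadratic forms by polarization (valid since all four operators are $\mathbb{C}$-linear: $\langle Ta,a\rangle=0$ for all basic $a$ gives $\langle Ta,b\rangle=0$ for all basic $a,b$, hence $T=0$), and verify that identity by repeated integration by parts, with Gauduchon entering only under the integral sign in the last line. I checked the bookkeeping: the Leibniz rules, the pointwise identities $i\Lambda h(\partial_E a,\partial_E a)=|\partial_E a|^2$ and $i\Lambda h(\overline{\partial}_E a,\overline{\partial}_E a)=-|\overline{\partial}_E a|^2$, the values $\langle P_E a,a\rangle=\|\partial_E a\|^2+T_1$ and $\langle i\Lambda\partial_E\overline{\partial}_E a,a\rangle=-\|\overline{\partial}_E a\|^2+T_2$, the reduction of the lemma to $\overline{T_1}=T_2$, and the final integration by parts against $\partial\overline{\partial}\omega^{n-1}=0$ are all correct; moreover each form to which you apply Stokes is basic of total degree $2n-1$, so the basic Stokes theorem applies every time it is used. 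As to what each approach buys: the paper's computation yields the identity pointwise and costs nothing beyond Lemma \ref{lem:pstar}, while yours is independent of that lemma and cleanly isolates where tautness (the three uses of basic Stokes) and the Gauduchon hypothesis (the very last step) are needed, at the price of being intrinsically global — compactness and tautness are required throughout, though both are standing hypotheses of the lemma. One cosmetic remark: the statement's ``$\Omega^0(E)$'' should be read as $\Omega^0_B(E)$, since $P_E$ and $\Lambda$ are only defined on basic sections, and that is indeed the domain on which your polarization argument establishes the identity.
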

\begin{proof}
This is a local computation, so it is essentially the same as the non-foliated setting, see \cite[Lemma 7.2.5]{lt}.
\end{proof}

\begin{lemma}
Suppose that $X$ is a compact oriented, taut, transverse Hermitian foliation of complex codimension $n$ with transverse Gauduchon metric. Let $E$ be a transverse holomorphic vector bundle equipped with a transverse Hermitian metric $h$ and suppose that $K_E = 0$. Then (as operators on $\Omega^0_B(E)$) we have:
\[
Ker(P_E) = Ker(P^*_E) = Ker(\Delta_E) = Ker(d_E).
\]
\end{lemma}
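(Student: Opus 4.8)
The plan is to reduce everything to the single hard inclusion $Ker(P_E) \subseteq Ker(d_E)$ together with a dimension count, exploiting the relation $\Delta_E = P_E + P_E^*$ which follows from Lemma \ref{lem:deltappstar} once we set $K_E = 0$.

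First I would dispose of the easy identifications. On $\Omega^0_B(E)$ the operator $d_E^*$ annihilates sections, so $\Delta_E = d_E^* d_E$ there, whence $\langle \Delta_E a , a \rangle = \| d_E a \|^2$ and therefore $Ker(\Delta_E) = Ker(d_E)$. Next, if $d_E a = 0$ then in particular $\partial_E a = 0$, so $P_E a = i \Lambda \overline{\partial}_E \partial_E a = 0$; this gives $Ker(d_E) \subseteq Ker(P_E)$. Combining these with $\Delta_E = P_E + P_E^*$, any $a \in Ker(d_E) = Ker(\Delta_E)$ satisfies $P_E^* a = \Delta_E a - P_E a = 0$, so also $Ker(d_E) \subseteq Ker(P_E^*)$.

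The heart of the matter is the reverse inclusion $Ker(P_E) \subseteq Ker(d_E)$, and here I would establish a pointwise Bochner-type identity for the real basic function $\phi = h(a,a)$. Expanding $\partial \overline{\partial} \phi$ by the Leibniz rule for the Chern connection and applying $i\Lambda$, using that $i\Lambda h(\mu,\mu) = |\mu|^2$ for $\mu \in \Omega^{1,0}_B(E)$, that $i\Lambda h(\nu,\nu) = -|\nu|^2$ for $\nu \in \Omega^{0,1}_B(E)$, and the curvature relation $i\Lambda \partial_E \overline{\partial}_E a = K_E a - P_E a = -P_E a$ (valid since $K_E = 0$), one obtains
\[
P(h(a,a)) = 2\,\mathrm{Re}\, h(P_E a , a) - |\partial_E a|^2 - |\overline{\partial}_E a|^2 ,
\]
where $P = i\Lambda \overline{\partial}\partial$ is the scalar $P$-operator. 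If $P_E a = 0$ this reads $P(h(a,a)) = -|d_E a|^2 \le 0$. Since $P(h(a,a))$ lies in $Im(P|_{\mathcal{C}^\infty_B(X,\mathbb{R})})$ and has constant sign, Lemma \ref{lem:kerimp} forces $P(h(a,a)) = 0$, hence $d_E a = 0$. This gives $Ker(P_E) \subseteq Ker(d_E)$, and with the easy inclusion above, $Ker(P_E) = Ker(d_E)$.

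Finally I would pin down $Ker(P_E^*)$ by a dimension count. By Lemma \ref{lem:ind0} the operator $P_E$ has index zero, so $dim\, Ker(P_E^*) = dim\, Ker(P_E) = dim\, Ker(d_E)$; combined with the inclusion $Ker(d_E) \subseteq Ker(P_E^*)$ and finite-dimensionality, this yields $Ker(P_E^*) = Ker(d_E)$. Together with $Ker(\Delta_E) = Ker(d_E)$, all four kernels coincide. The main obstacle is the pointwise identity and its interface with the maximum principle: the delicate point is that the transverse Gauduchon hypothesis is precisely what makes the scalar operator $P$ satisfy Lemma \ref{lem:kerimp}, so that a constant-sign element of $Im(P)$ must vanish. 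This is what replaces the usual Kähler Bochner argument, which is unavailable in the merely Hermitian transverse setting.
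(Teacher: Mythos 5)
Your proof is correct, but it takes a genuinely different route from the paper's, so it is worth comparing the two. Both arguments share the easy identifications and both hinge on Lemma \ref{lem:deltappstar}, which for $K_E=0$ gives the operator identity $\Delta_E = P_E + P_E^*$. The paper exploits this identity purely at the $L^2$ level: pairing with $a$ and using adjointness gives $\langle \Delta_E a, a\rangle = \langle P_E a, a\rangle + \langle a, P_E a\rangle = \langle P_E^* a, a\rangle + \langle a, P_E^* a\rangle$, so $P_E a = 0$ or $P_E^* a = 0$ forces $\|d_E a\|_{L^2} = 0$; this single integration-by-parts disposes of both hard inclusions $Ker(P_E)\subseteq Ker(d_E)$ and $Ker(P_E^*)\subseteq Ker(d_E)$ at once, and the remaining inclusion $Ker(d_E)\subseteq Ker(P_E^*)$ is then read off from the explicit adjoint formula of Lemma \ref{lem:pstar} together with the Gauduchon condition $\overline{\partial}\partial\omega^{n-1}=0$. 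You instead prove $Ker(P_E)\subseteq Ker(d_E)$ by a pointwise Bochner identity plus the maximum-principle statement of Lemma \ref{lem:kerimp} (your identity is correct, including the sign of the curvature term when $K_E=0$; it is essentially the same computation as in the proof of Theorem \ref{thm:vanishing}), obtain $Ker(d_E)\subseteq Ker(P_E^*)$ by the neat algebraic observation $P_E^* = \Delta_E - P_E$, and close the circle with the index-zero dimension count of Lemma \ref{lem:ind0}. What each buys: the paper's $L^2$ pairing is shorter and needs nothing beyond self-adjointness of $\Delta_E$, while your route avoids Lemma \ref{lem:pstar} altogether, replacing it by the Fredholm/index theory the paper has already set up. One small correction to your closing remark: the Gauduchon hypothesis is not what makes Lemma \ref{lem:kerimp} hold --- that lemma is a pure maximum-principle statement, valid for any transverse Hermitian metric --- rather, Gauduchon enters your argument through Lemma \ref{lem:deltappstar} (and, in the paper's proof, additionally through Lemma \ref{lem:pstar}).
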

\begin{proof}
Clearly $Ker(d_E) = Ker(\Delta_E)$. If $d_E(a) = 0$, then $\partial_E(a) =0$ and hence $P_E(a) = 0$, so $Ker(d_E) \subseteq Ker(P_E)$. On the other hand, if $K_E = 0$, then by Lemma \ref{lem:deltappstar}, we have
\[
\langle \Delta_E a , a \rangle = \langle P_E(a) , a \rangle + \langle a , P_E(a) \rangle = \langle P_E^*(a) , a \rangle +  \langle a , P_E^*(a) \rangle,
\]
from which it follows that $Ker(P_E) \subseteq Ker(\Delta_E) = Ker(d_E)$ and $Ker(P_E^*) \subseteq Ker(\Delta_E) = Ker(d_E)$. It remains only to show that $Ker(d_E) \subseteq Ker(P_E^*)$. But this follows easily from Lemma \ref{lem:pstar} and the fact that $X$ is Gauduchon.
\end{proof}

\subsection{Transverse resolution of singularities}\label{sec:trressing}

\begin{definition}
Let $X$ be a foliated manifold with transverse complex structure. A subset $S \subseteq X$ is called a {\em transverse analytic subvariety of $X$} if for every $s \in S$, there exists a foliated chart $U = V \times W$ containing $s$ for which $S \cap U$ is the common zero locus of a finite number of basic holomorphic functions on $U$. Since $S$ is locally given in a foliated coordinate chart $U = V \times W$ as the pre-image under $V \times W \to V$ of an analytic subvariety $S_V \subseteq V$ in the ordinary sense, we can for each $s \in S$ define the {\em codimension of $S$ at $s$} to be the codimension of the corresponding point in $S_V$ (with codimension taken with respect to $V$). Clearly this does not depend on the choice of foliated chart. We then define the {\em codimension of $S$} to be the infimum over all $s \in S$ of the codimension of $S$ at $s$.\\

The collection of all transverse analytic subsets of $X$ gives a topology on $X$ (which is certainly not Hausdorff as it does not separate points which lie in the same leaf of the foliation). Using this topology, we may speak of {\em irreducible transverse analytic subsets}.
\end{definition}

Any local properties in complex analytic geometry can easily be extended to the setting of transverse analytic subvarieties. Thus for example we may speak of singular or non-singular transverse analytic subvarieties of $X$ and if $S \subseteq X$ is a transverse analytic subvariety we may speak of the singular locus $S_{\rm sing} \subset S$, which is again a transverse analytic subvariety of $X$.

\begin{theorem}
Let $X$ be a foliated manifold with transverse complex structure and let $\iota : Y \to X$ be a transverse analytic subvariety. There exists a foliated manifold $\widetilde Y$ with transverse complex structure and a proper map $q : \widetilde{Y} \to Y$ such that
\begin{itemize}
\item[(i)]{The composition $\iota \circ q : \widetilde{Y} \to X$ is a smooth map.}
\item[(ii)]{$\iota \circ q$ is transverse holomorphic in the sense that there exists covers of $\tilde{Y}$ and $X$ by foliated coordinate charts of the form $U_1 = V_1 \times W_1 \subseteq \widetilde{Y}$ and $U_2 = V_2 \times W_2 \subseteq X$ on which $\iota \circ q$ is given by
\[
V_1 \times W_1 \ni ( v , w ) \mapsto ( q_1(v) , q_2(v,w) ) \in V_2 \times W_2,
\]
where $q_1$ is holomorphic.}
\item[(iii)]{$q : \widetilde{Y} \to Y$ is an isomorphism of foliated manifolds with transverse holomorphic structure over the non-singular part of $Y$.}
\end{itemize}
\end{theorem}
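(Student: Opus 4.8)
The plan is to reduce the statement to the classical Hironaka resolution applied in the transverse directions and then to glue the local resolutions into a single foliated object. The structural fact that makes this possible is already built into the definition of a transverse analytic subvariety: in each foliated chart $U_\alpha = V_\alpha \times W_\alpha$ the subvariety $Y$ has the product form $Y_\alpha \times W_\alpha$, where $Y_\alpha \subseteq V_\alpha \subseteq \mathbb{C}^n$ is an honest complex-analytic subvariety of the transverse slice. Moreover, because transition maps between foliated charts preserve both the foliation and the transverse complex structure, on overlaps they take the triangular form $(v,w) \mapsto (\phi_{\alpha\beta}(v), \psi_{\alpha\beta}(v,w))$ with $\phi_{\alpha\beta}$ a biholomorphism of the transverse coordinates carrying $Y_\beta$ onto $Y_\alpha$. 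The entire problem is therefore to resolve the family $\{Y_\alpha\}$ of local subvarieties \emph{compatibly} with the biholomorphisms $\{\phi_{\alpha\beta}\}$.

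First I would invoke a \emph{canonical} (functorial) form of resolution of singularities in the complex-analytic category, applied to each $Y_\alpha$, producing proper holomorphic maps $q_1^\alpha \colon \widetilde{Y}_\alpha \to Y_\alpha$ from smooth complex manifolds, isomorphisms over the regular locus. The essential input is functoriality with respect to biholomorphisms (equivalently, with respect to smooth or \'etale morphisms), which the strong forms of Hironaka's theorem supply: for each $\phi_{\alpha\beta}$ carrying $Y_\beta$ to $Y_\alpha$ there is a canonical lift $\widetilde{\phi}_{\alpha\beta} \colon \widetilde{Y}_\beta \to \widetilde{Y}_\alpha$ intertwining the resolution maps, $q_1^\alpha \circ \widetilde{\phi}_{\alpha\beta} = \phi_{\alpha\beta}\circ q_1^\beta$, and these lifts inherit the cocycle condition from $\{\phi_{\alpha\beta}\}$.

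Next I would form the local pieces $\widetilde{U}_\alpha = \widetilde{Y}_\alpha \times W_\alpha$, each with the evident foliated structure (leaves $\{\tilde v\} \times W_\alpha$) and transverse complex structure pulled back from $\widetilde{Y}_\alpha$, and glue them by the transition maps $(\tilde v, w) \mapsto \bigl(\widetilde{\phi}_{\alpha\beta}(\tilde v),\, \psi_{\alpha\beta}(q_1^\beta(\tilde v), w)\bigr)$. The cocycle condition for these transitions follows from the cocycle conditions for $\{\widetilde{\phi}_{\alpha\beta}\}$ and $\{\psi_{\alpha\beta}\}$ together with the intertwining relation above; since $\widetilde{\phi}_{\alpha\beta}$ is holomorphic and fixes the leaf structure, the glued object $\widetilde{Y}$ is a foliated manifold with transverse complex structure, equipped with a map $q \colon \widetilde{Y} \to Y$ given locally by $q_1^\alpha \times \mathrm{id}$. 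Properties (i)--(iii) are then purely local: smoothness of $\iota \circ q$ and the local form $(\tilde v,w)\mapsto (q_1^\alpha(\tilde v), w)$ in (ii) hold because $q_1^\alpha$ is holomorphic; (iii) holds because each $q_1^\alpha$ is an isomorphism over $(Y_\alpha)_{\mathrm{reg}}$; and properness of $q$ follows from properness of each $q_1^\alpha$ (Hironaka resolutions are projective) and the fact that properness may be checked over the cover $\{U_\alpha \cap Y\}$ of the base.

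The main obstacle is exactly this gluing: an arbitrary, non-canonical choice of resolution in each chart would not be compatible on overlaps, so the argument rests entirely on using a functorial resolution, so that the $\phi_{\alpha\beta}$ lift canonically and satisfy the cocycle condition. A secondary point to verify carefully is that the transverse transition maps genuinely have the triangular product form with $\phi_{\alpha\beta}$ holomorphic, which is precisely what the definitions of foliated chart and transverse complex structure guarantee.
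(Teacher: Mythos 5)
Your proposal is correct and follows essentially the same route as the paper: both exploit the local product form of $Y$ in foliated charts, the triangular form of the transition maps, and the functorial (canonical) Hironaka resolution applied to the transverse slices, with the lifted transition maps $\bigl(\widetilde{\phi}_{\alpha\beta}(\tilde v), \psi_{\alpha\beta}(q_1^\beta(\tilde v),w)\bigr)$ gluing the local resolutions exactly as in the paper. Your explicit remark that properness can be checked locally over the cover is a detail the paper leaves implicit, but otherwise the arguments coincide.
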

\begin{proof}
This is basically a consequence of the existence of a functorial resolution of singularities for analytic varieties \cite{wlod}. Choose an open cover of $X$ by foliated charts $U_\alpha = V_\alpha \times W_\alpha \buildrel \varphi_\alpha \over \longrightarrow X$. The charts can be chosen so that the overlaps $U_{\alpha \beta} = \varphi_\alpha^{-1}( U_\beta ) \subseteq U_\alpha$ have the form $U_{\alpha \beta} = V_{\alpha \beta} \times W_{\alpha \beta}$ with $V_{\alpha \beta} \subseteq V_\alpha$, $W_{\alpha \beta} \subseteq W_\alpha$ and the transition maps
\begin{equation*}\xymatrix{
V_{\beta \alpha} \times W_{\beta \alpha} = U_{\beta \alpha} = \varphi^{-1}_\beta( U_\alpha ) \ar[rr]^-{\varphi_{\alpha \beta} = \varphi_\alpha^{-1} \circ \varphi_\beta} & & \varphi_\alpha^{-1}(U_\beta) = U_{\alpha \beta} = V_{\alpha \beta} \times W_{\alpha \beta}
}
\end{equation*}
have the form
\[
\varphi_{\alpha \beta}( u , v ) = ( f_{\alpha \beta}(u) , g_{\alpha \beta}(u,v) )
\]
for some $f_{\alpha \beta} : V_{\beta \alpha} \to V_{\alpha \beta}$ and some $g_{\alpha \beta} : V_{\beta \alpha} \times W_{\beta \alpha} \to W_{\alpha \beta}$, where the $f_{\alpha \beta}$ are holomorphic.\\

Since $Y$ is a transverse analytic subvariety, the image $Y_{\alpha} = \varphi^{-1}_{\alpha}(Y \cap U_\alpha)$ of $Y$ in the chart $U_\alpha$ has the form $Y_\alpha = \pi_\alpha^{-1}( Z_\alpha)$, where $Z_\alpha \subseteq V_\alpha$ is an analytic subvariety of $V_\alpha$ and $\pi_\alpha : U_\alpha = V_\alpha \times W_\alpha \to V_\alpha$ is the projection. By Hironaka's resolution of singularities \cite{hir} (see, eg \cite[Theorem 2.0.1]{wlod} for the case of analytic varieties), there exists a canonical desingularisation $p_\alpha : \tilde{Z}_\alpha \to Z_\alpha$, where $\tilde{Z}_\alpha$ is smooth, $p_\alpha$ is proper, bimeromorphic and an isomorphism over the non-singular locus of $Z_\alpha$. Set $\tilde{Y}_\alpha = \tilde{Z}_\alpha \times W_\alpha$ and let $q_\alpha : \tilde{Y}_\alpha \to Y_\alpha$ be given by $q_\alpha( \tilde{z} , w ) = ( p_\alpha(\tilde{z}) , w)$, where $\tilde{z} \in \tilde{Z}_\alpha$, $w \in W_\alpha$. Observe that the composition $\varphi_\alpha^{-1} \circ \iota \circ q_\alpha : \tilde{Z}_\alpha \times W_\alpha = \tilde{Y}_\alpha \to U_\alpha = V_\alpha \times W_\alpha$ has the form
\[
( \tilde{z} , w ) \mapsto ( p_\alpha(\tilde{z}) , w).
\]
In particular, $\iota \circ q_\alpha$ satisfies (i), (ii) and (iii) (restricted to $U_\alpha$). If we can show that the local desingularisations $\{ q_\alpha : \tilde{Y}_\alpha \to Y_\alpha \}$ glue together on the overlaps of coordinate charts, we will have obtained our desired desingularisation $q : \tilde{Y} \to Y$. In fact, this is easily seen to follow from the {\em functioriality} property of the desingularisations $\tilde{Z}_\alpha \to Z$ \cite[Theorem 2.0.1 (3)]{wlod}, which shows that the $\tilde{Z}_\alpha$ agree on overlaps. In more detail, this means that the change of coordinate maps $f_{\alpha \beta} : Z_\beta \cap V_{\beta \alpha} \to Z_\alpha \cap V_{\alpha \beta}$ lift to $\tilde{f}_{\alpha \beta} : p_\beta^{-1}( V_{\beta \alpha} ) \to p_\alpha^{-1}( V_{\alpha \beta} )$ satisfying an associativity condition on triple overlaps (by functoriality of the $\tilde{f}_{\alpha \beta}$). Let $\tilde{Y}_{\alpha \beta} = p_\alpha^{-1}(V_{\alpha \beta}) \times W_{\alpha \beta} \subseteq \tilde{Z}_\alpha \times W_\alpha = \tilde{Y}_\alpha$. We define transition maps $\psi_{\alpha \beta} : \tilde{Y}_{\beta \alpha} \to \tilde{Y}_{\alpha \beta}$ by
\[
\psi_{\alpha \beta}( \tilde{z} , w ) = ( \tilde{f}_{\alpha \beta}( \tilde{z} ) , g_{\alpha \beta}( p_\beta(\tilde{z} , w) ).
\]
Then it is easy to check that the $\psi_{\alpha \beta}$ satisfy the appropriate associativity condition on triple overlaps (because the $\tilde{f}_{\alpha \beta}$ and the $\varphi_{\alpha \beta} = (f_{\alpha \beta} , g_{\alpha \beta})$ satisfy such conditions), hence allow us to glue the $\{ q_\alpha : \tilde{Y}_\alpha \to Y_\alpha \}$ together to obtain the desired $q : \tilde{Y} \to Y$.
\end{proof}

\subsection{Transverse coherent sheaves and stability}\label{sec:stability}

Let $X$ be a compact oriented, taut, transverse Hermitian foliation of complex codimension $n$ with transverse Gauduchon metric $g$. 

\begin{definition}\label{def:degslopebundle}
Let $E$ be a transverse holomorphic vector bundle of rank $r$ which admits a transverse Hermitian metric $h$. We define the {\em degree} of $E$, denoted $\deg(E)$ to be the real number
\[
\deg(E) = \frac{i}{2\pi} \int_X tr(F_E) \wedge \omega^{n-1} \wedge \chi,
\]
where $F_E$ is the curvature of the Chern connection associated to $h$. This is independent of the choice of transverse Hermitian metric, because $tr(F_E)$ is independent of $h$ up to a $\partial \overline{\partial}$-exact term, which by the basic Stokes' theorem and the Gauduchon property $\partial \overline{\partial} \omega^{n-1} = 0$ does not alter the degree. The {\em slope} of $E$, denoted $\mu(E)$ is defined by $\mu(E) = \deg(E)/r$.
\end{definition}

\begin{remark}
Note that $\deg(E)$ depends on the choice of leafwise volume form $\chi$. Moreover, our argument that $\deg(E)$ is independent of the choice of transverse Hermitian metric only holds in the case that the foliation is taut, since otherwise when applying Stokes' theorem there would be an additional term which in general can change the degree. Let us also point out that the degree of a transverse holomorphic vector bundle $E$ is only defined when $E$ admits a transverse Hermitian metric.
\end{remark}

\begin{definition}\label{def:tcs}
Let $X$ be a foliated manifold with transverse complex structure and let $\mathcal{O}$ denote the sheaf of basic holomorphic functions on $X$. A sheaf of $\mathcal{O}$-modules $\mathcal{F}$ is called a {\em transverse coherent sheaf} if locally, $\mathcal{F}$ is given as the cokernel of a sheaf map $\mathcal{O}^p \to \mathcal{O}^q$ for some $p,q$.
\end{definition}
By this definition, in a local foliated chart $U = V \times W \subset X$, a transverse coherent sheaf is the same thing as a coherent sheaf on $V$. In particular it follows that to any local property of coherent sheaves, there is a corresponding local property for transverse coherent sheaves. In particular, we may speak of {\em torsion free}, {\em reflexive} and {\em locally free} transverse coherent sheaves. It is easy to see that locally free transverse coherent sheaves correspond to transverse holomorphic vector bundles by taking the sheaf of basic holomorphic sections. To any transverse coherent sheaf $\mathcal{F}$, we may associate a determinant $\det({\mathcal{F}})$ which is a transverse holomorphic line bundle. The determinant $\det({\mathcal{F}})$ is constructed exactly as in the non-foliated setting \cite[Chapter V, \textsection 6]{kob}. Adapting the proofs in \cite[Chapter V, \textsection 5]{kob}, we also find that a torsion-free (resp. reflexive) transverse coherent sheaf is locally free outside a transverse analytic subvariety of codimension at least $2$ (resp. $3$).\\ 

In order to define stability of transverse holomorphic vector bundles, we need to define the degree of transverse coherent subsheaves. However, there is a complication due to the fact that if $\mathcal{F}$ is a coherent subsheaf of a transverse holomorphic vector bundle $E$, then even if $E$ admits a transverse Hermitian metric it is not at all clear whether the determinant line bundle $det(\mathcal{F})$ associated to $\mathcal{F}$ admits a transverse Hermitian metric. Thus we can not simply define $\deg(\mathcal{F})$ to be $\deg(det(\mathcal{F}))$. We get around this problem using a foliated resolution of singularities.\\

Let $E$ be a transverse holomorphic vector bundle of rank $r$ which admits a transverse Hermitian metric. Let $s$ be a positive integer less than $r$ and let $q : Gr(s,E) \to X$ be the associated Grassmannian bundle of $E$ whose fibre over $x \in X$ is the Grassmannian $Gr(s,r)$ of $s$-dimensional complex subspaces of $E_x$. We note that $Gr(s,E)$ has a natural taut, Riemannian foliation. To see this, note that the vector bundle $E$ is constructed by patching together local trivialisations over foliated charts of $X$ such that the transition functions $g_{\alpha \beta}$ are basic holomorphic. Then $Gr(s,E)$ is just the associated Grassmannian bundle built out of the transition functions $g_{\alpha \beta}$ by the natural action of $GL(r,\mathbb{C})$ on $Gr(s,r)$. In this way we obtain a lift of the Riemannian foliation on $X$ to a Riemannian foliation on $Gr(s,E)$ which is taut, with leafwise volume form $q^*(\chi)$. Moreover $Gr(s,E)$ has a natural transverse complex structure.\\

On $Gr(s,E)$ we have the tautological rank $s$ transverse holomorphic vector bundle $\mathbb{F} \to Gr(s,E)$, which comes with a natural inclusion $j : \mathbb{F} \to q^*(E)$. A transverse holomorphic subbundle $F \subset E$ of $E$ of rank $s$ is precisely a basic holomorphic section $s : X \to Gr(s,E)$. The correspondence is given by pulling back the tautological bundle, i.e. $F = s^*(\mathbb{F})$. This correspondence extends to rank $s$ coherent subsheaves in the following sense. Consider a closed irreducible transverse analytic subvariety $\iota : Y \to Gr(s,E)$ such that $Y$ generically projects isomorphically onto $X$ via $q$. To such a subvariety $Y$, we associate the transverse coherent sheaf $\mathcal{F} = q_*( \iota^* \mathcal{O}(\mathbb{F}))$, which is naturally a subsheaf of $q_* \iota^* q^* \mathcal{O}(E) \cong \mathcal{O}(E)$. Moreover, since $Y$ is irreducible one finds that the quotient sheaf $\mathcal{O}(E)/\mathcal{F}$ is torsion-free. Conversely, if $\mathcal{F}$ is a rank $s$ subsheaf of $E$ with torsion free quotient then we recover $Y$ as follows. On the complement $U = X \setminus S$ of a closed transverse analytic subvariety $S \subset X$, we have that $\mathcal{F}$ is given by a transverse holomorphic subbundle $F \subset E$ over $U$. Let $s : U \to Gr(s,E)$ be the corresponding section of $Gr(s,E)$ and let $Y$ be the closure of the image of $s$. Since $\mathcal{F} \to E$ was assumed to have torsion free quotient, it is easy to see that we recover $\mathcal{F}$ from $Y$ by taking $q_* \iota^* \mathbb{F}$.\\

We have established a correspondence between rank $s$ coherent subsheaves of $E$ with torsion free quotient and closed irreducible subvarieties $\iota :  Y \to Gr(s,E)$ that generically project isomorphically to $X$. We would like to define the degree of $Y$ to be $\deg( \iota^* \mathbb{F})$, however $Y$ may be singular, so we need to take a resolution of singularities $\alpha : \widetilde{Y} \to Y$. Thus we at last arrive at the following definition:
\begin{definition}\label{def:degreesubsheaf}
Let $E$ be a transverse holomorphic vector bundle which admits a transverse Hermitian metric and let $\mathcal{F} \to E$ be a transverse coherent subsheaf with $0 < s = rk(\mathcal{F}) < r = rk(E)$ and such that the quotient $\mathcal{O}(E)/\mathcal{F}$ is torsion-free. We define the degree of $\mathcal{F}$ as follows. Let $\iota : Y \to Gr(s,E)$ be the corresponding irreducible transverse analytic subvariety of $Gr(s,E)$ and let $\alpha : \widetilde{Y} \to Y$ be a transverse resolution of singularities. We note that $\mathbb{F}$ is a transverse holomorphic subbundle of $q^*(E)$ via the canonical map $j : \mathbb{F} \to q^*(E)$ and thus inherits an induced transverse Hermitian metric. Therefore, $(\iota \circ \alpha)^*( \mathbb{F})$ has a natural transverse Hermitian metric, so has an associated Chern connection with curvature $\widetilde{F}_1$. We then define
\[
\deg(\mathcal{F}) = \frac{i}{2\pi} \int_{\widetilde{Y}} tr( \widetilde{F}_1) \wedge (q \circ \iota \circ \alpha)^*( \omega^{n-1} \wedge \chi).
\]
We define the slope of $\mathcal{F}$ to be $\mu(\mathcal{F}) = \deg(\mathcal{F})/s$.
\end{definition}
\begin{remark}
For this definition to make sense, we need to check that it is independent of the choice of resolution $\alpha : \widetilde{Y} \to Y$. In fact, this will follow from the Proposition \ref{prop:cw} below. Moreover, if $\mathcal{F}$ is actually a transverse holomorphic subbundle of $E$, then it is easy to check that this definition of degree agrees with the previous definition. The definition is also independent of the choice of Hermitian metric on $E$ because the only effect of this is to change the induced Hermitian metric on the line bundle $(\iota \circ \alpha)^*( \mathbb{F} )$, which changes $tr( \widetilde{F}_1 )$ by a $\partial \overline{\partial}$-exact term. By the basic Stokes' theorem and tautness such a term does not contribute to the above integral.
\end{remark}

\begin{proposition}\label{prop:cw}
Let $E$ be a transverse holomorphic vector bundle which admits a transverse Hermitian metric and let $\mathcal{F} \to E$ be a transverse coherent subsheaf with $0 < s = rk(\mathcal{F}) < r = rk(E)$ and such that the quotient $\mathcal{O}(E)/\mathcal{F}$ is torsion-free. Then since $\mathcal{F}$ and $\mathcal{O}(E)/\mathcal{F}$ are torsion free, there is a transverse analytic subvariety $S \subset X$ of complex codimension at least $2$ such that on $X \setminus S$, we have that $\mathcal{F}$ and $\mathcal{O}(E)/\mathcal{F}$ are locally free and hence $\mathcal{F}$ is given by a transverse holomorphic subbundle $F \to E$ on $X \setminus S$. Let $h_1$ be the transverse Hermitian metric on $F$ obtained by restriction to $F$ of the transverse Hermitian metric on $E$. Let $F_1$ denote the curvature $2$-form of the Chern connection on $F$ associated to $h_1$. Then $F_1$ is in $L^1_B$ on $X \setminus S$ and
\[
{\rm deg}(\mathcal{F}) = \frac{i}{2\pi} \int_{X \setminus S} tr( F_1 ) \wedge \omega^{n-1} \wedge \chi.
\]
In particular, this implies that ${\rm deg}(\mathcal{F})$ is independent of the choice of resolution of singularities $\alpha : \widetilde{Y} \to Y$.
\end{proposition}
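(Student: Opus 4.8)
The plan is to reduce the definition of $\deg(\mathcal{F})$, which lives on the resolution $\widetilde{Y}$, to an integral over $X \setminus S$ by a change of variables for differential forms, and then to extract the analytic statement $F_1 \in L^1_B$ from the resulting finiteness together with the Gauss--Codazzi equation. First I would pin down $S$ and the geometry of $\pi := q \circ \iota \circ \alpha : \widetilde{Y} \to X$. Take $S$ to be the transverse analytic locus, of codimension $\ge 2$, outside of which both $\mathcal{F}$ and $\mathcal{O}(E)/\mathcal{F}$ are locally free; on $U := X \setminus S$ the sheaf $\mathcal{F}$ is a transverse holomorphic subbundle $F \subset E$ corresponding to a basic holomorphic section $\sigma : U \to Gr(s,E)$. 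I claim $\pi$ restricts to an isomorphism of foliated manifolds $\pi^{-1}(U) \xrightarrow{\sim} U$: over $U$ the variety $Y$ is exactly the graph of $\sigma$ (the graph is closed in $q^{-1}(U)$ and $Y$ is its closure), so $q$ is an isomorphism there and $Y$ is smooth over $U$, whence $\alpha$ is an isomorphism over $q^{-1}(U) \cap Y$ as well. Under this identification the Hermitian holomorphic bundle $(\iota \circ \alpha)^*\mathbb{F}$ pulls back to $(F, h_1)$, so $\widetilde{F}_1 = \pi^* F_1$ on $\pi^{-1}(U)$.

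Next I would prove the degree formula by pulling back forms. Since $\pi^{-1}(S)$ is a transverse analytic subvariety of $\widetilde{Y}$, hence of measure zero, and the integrand $tr(\widetilde{F}_1) \wedge \pi^*(\omega^{n-1} \wedge \chi)$ is a smooth top-form on the compact manifold $\widetilde{Y}$, the defining integral is unchanged upon restriction to $\pi^{-1}(U)$. The map $\pi$ is an orientation-preserving foliated diffeomorphism there (it is transverse holomorphic and carries the leafwise volume $q^*(\chi)$ to $\chi$), so the change-of-variables formula gives
\[
\frac{i}{2\pi} \int_{\widetilde{Y}} tr(\widetilde{F}_1) \wedge \pi^*(\omega^{n-1} \wedge \chi) = \frac{i}{2\pi} \int_U tr(F_1) \wedge \omega^{n-1} \wedge \chi,
\]
the right-hand integral converging absolutely and equalling $\deg(\mathcal{F})$. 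Because the right-hand side makes no reference to $\widetilde{Y}$, independence of the resolution is immediate.

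It remains to upgrade this to $F_1 \in L^1_B(U)$, which is where the only genuine work lies. Writing $E = F \oplus F^\perp$ as smooth foliated bundles on $U$ and letting $\beta \in \Omega^{1,0}_B(U, Hom(F, F^\perp))$ be the second fundamental form of $F \subset E$, the Gauss--Codazzi equation gives $F_1 = \pi_F F_E|_F + \beta^* \wedge \beta$, and a pointwise computation identifies $\frac{i}{2\pi} tr(\beta^* \wedge \beta) \wedge \omega^{n-1} \wedge \chi$ with $-c\,|\beta|^2\,vol_X$ for some constant $c > 0$. Since $\pi_F$ is an orthogonal projection (so $|\pi_F| \le 1$ uniformly on $U$) and $F_E$ is smooth on the compact manifold $X$, the term $tr(\pi_F F_E|_F) \wedge \omega^{n-1} \wedge \chi$ is bounded and hence integrable over the finite-volume set $U$. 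Subtracting, the absolute convergence of the trace integral just established forces $\int_U |\beta|^2\,vol_X < \infty$, i.e.\ $\beta \in L^2_B(U)$. Then $\beta^* \wedge \beta \in L^1_B$ and $\pi_F F_E|_F \in L^\infty_B \subset L^1_B$, so $F_1 \in L^1_B(U)$, as claimed.

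The hard part will be the Gauss--Codazzi bookkeeping, and in particular the sign/positivity in the identification of $tr(\beta^* \wedge \beta) \wedge \omega^{n-1} \wedge \chi$ with $-c\,|\beta|^2\,vol_X$; this is precisely what converts the finiteness of the (signed) degree integral into an honest $L^2$ bound on $\beta$. Everything else is a matter of organising the measure-zero and change-of-variables arguments in the basic/foliated category, which is legitimate because $\pi$ is a foliated map, $\pi^{-1}(S)$ is transverse analytic, and the relevant forms are basic. Note that the logical order is essential and non-circular: the degree formula is obtained purely as an identity of integrals of smooth forms under $\pi$, and only afterwards does Gauss--Codazzi extract the $L^2$-estimate on the second fundamental form and the resulting $L^1$-bound on the curvature.
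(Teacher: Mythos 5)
Your proposal is correct, and its core is exactly the paper's proof: the paper sets $l = q \circ \iota \circ \alpha$, observes that $l : \widetilde{Y} \setminus l^{-1}(S) \to X \setminus S$ is a diffeomorphism with $l^{-1}(S)$ of measure zero, notes $\widetilde{F}_1 = l^*(F_1)$ there, and concludes by change of variables — which is precisely your first two paragraphs. Where you genuinely diverge is the $L^1$ claim. The paper treats it as immediate from the same pullback relation, but strictly speaking that argument only yields absolute integrability of the top-form $tr(F_1) \wedge \omega^{n-1} \wedge \chi$ (equivalently, of $\Lambda\, tr(F_1)$): integrals of top-forms are preserved under a diffeomorphism, but the operator norm of $D(l^{-1})$ blows up near $S$, so pointwise bounds on $|\widetilde{F}_1|$ do not transfer to pointwise bounds on $|F_1|$. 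Your third paragraph — the Gauss--Codazzi decomposition $F_1 = \pi_F F_E|_F + \beta^* \wedge \beta$, the definite sign of $i\Lambda\, tr(\beta^* \wedge \beta)$, and the chain ``degree integral converges absolutely $\Rightarrow \beta \in L^2 \Rightarrow \beta^* \wedge \beta \in L^1$'' — is what actually delivers the stronger statement $F_1 \in L^1_B$ as literally asserted in the proposition, and it uses the same Griffiths-type identity the paper itself invokes later in the proof of Theorem~\ref{thm:heispolystable}. So your argument is a superset of the paper's: the same mechanism gives the degree formula and resolution-independence, and your extra step (correct, and arguably needed to justify the full curvature claim rather than just the trace integrand) costs only the standard second-fundamental-form bookkeeping. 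Your closing remark on the order of quantifiers is also the right one: the degree formula is established purely by change of variables, and only then is its absolute convergence spent on the $L^2$-bound for $\beta$, so there is no circularity.
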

\begin{proof}
Let $l : \widetilde{Y} \to X$ be the composition $l = q \circ \iota \circ \alpha$ and let $\widetilde{S} = l^{-1}(S)$. Then $l : \widetilde{Y} \setminus \widetilde{S} \to X \setminus S$ is a diffeomorphism and $\widetilde{S}$ has measure zero in $\widetilde{Y}$. The result now follows by noting that $\widetilde{F}_1$ in Definition \ref{def:degreesubsheaf} is related to $F_1$ on the complement of $S$ by $\widetilde{F}_1 = l^*(F_1)$.
\end{proof}

\begin{definition}\label{def:stability}
Let $E$ be a transverse holomorphic vector bundle which admits a transverse Hermitian metric. We say that $E$ is {\em stable} (resp. {\em semistable}) if for each transverse coherent subsheaf $\mathcal{F} \to E$ with $0 < s = rk(\mathcal{F}) < r = rk(E)$ and such that the quotient $\mathcal{O}(E)/\mathcal{F}$ is torsion-free, we have
\[
\mu(\mathcal{F}) < \mu(E) \quad ({\rm resp.} \; \; \mu(\mathcal{F}) \le \mu(E) ).
\]
We also say that $E$ is {\em polystable} if $E$ the direct sum of stable bundles of the same slope.
\end{definition}

Note that if $E$ is stable and $F \subset E$ is a proper, non-trivial subbundle then $E/F$ is torsion free, hence $\mu(F) < \mu(E)$ by stability.

\begin{definition}
A transverse holomorphic bundle $E$ is called {\em simple} if the only basic holomorphic endomorphisms $E \to E$ are constant multiples of the identity, that is, $E$ is simple if $H^0_B( X , End(E) ) = \mathbb{C} Id$.
\end{definition}

\begin{proposition}\label{prop:stableissimple}
Let $E$ be a stable transverse holomorphic vector bundle. Then $E$ is simple.
\end{proposition}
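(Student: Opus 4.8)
The plan is to carry out the classical argument that stability implies simplicity, adapting each step to the transverse setting. Let $f \in H^0_B(X, End(E))$ be a basic holomorphic endomorphism; I must show $f = c\, Id$ for some $c \in \mathbb{C}$. The first ingredient I would establish is that every basic holomorphic function on $X$ is constant. Indeed, if $h$ is basic holomorphic then $\overline{\partial} h = 0$, so $\overline{\partial}\partial h = -\partial\overline{\partial} h = 0$ and hence $P(h) = i\Lambda \overline{\partial}\partial h = 0$; by Lemma \ref{lem:kerimp} this forces $h \in Ker(P) = \mathbb{C}$. (Equivalently one can argue directly from the maximum modulus principle applied in transverse charts together with connectedness of $X$.)

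Next I would produce a degenerate endomorphism. Fix a point $x_0 \in X$, let $\lambda$ be an eigenvalue of the fibrewise endomorphism $f_{x_0}$ of $E_{x_0}$, and set $g = f - \lambda\, Id \in H^0_B(X, End(E))$. Then $\det(g)$ is a basic holomorphic function vanishing at $x_0$, so by the previous step $\det(g) \equiv 0$; thus $g$ is nowhere invertible and $rk(\mathrm{im}\,g) \le r-1$. If $f$ is not a scalar then $g \neq 0$, so $\mathcal{K} = \ker(g)$ and $\mathcal{I} = \mathrm{im}(g)$ are transverse coherent subsheaves of $\mathcal{O}(E)$ with $0 < rk(\mathcal{K}), rk(\mathcal{I}) < r$ (coherence of kernel and image is a local property, hence passes to the transverse setting). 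Moreover $g$ induces an isomorphism $\mathcal{O}(E)/\mathcal{K} \cong \mathcal{I}$, so $\mathcal{O}(E)/\mathcal{K}$ is a subsheaf of the locally free sheaf $\mathcal{O}(E)$ and is therefore torsion-free, which is exactly the hypothesis needed to apply stability to $\mathcal{K}$.

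The contradiction then comes from the slope inequalities. Let $\overline{\mathcal{I}} \supseteq \mathcal{I}$ be the saturation of $\mathcal{I}$ in $\mathcal{O}(E)$; it has the same rank as $\mathcal{I}$, satisfies $\deg(\mathcal{I}) \le \deg(\overline{\mathcal{I}})$, and has torsion-free quotient $\mathcal{O}(E)/\overline{\mathcal{I}}$. Applying stability (Definition \ref{def:stability}) to $\mathcal{K}$ and to $\overline{\mathcal{I}}$ gives $\mu(\mathcal{K}) < \mu(E)$ and $\mu(\overline{\mathcal{I}}) < \mu(E)$, whence $\deg(\mathcal{K}) < \mu(E)\, rk(\mathcal{K})$ and $\deg(\mathcal{I}) \le \deg(\overline{\mathcal{I}}) < \mu(E)\, rk(\mathcal{I})$. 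On the other hand degree is additive in the short exact sequence $0 \to \mathcal{K} \to \mathcal{O}(E) \to \mathcal{I} \to 0$, so $\deg(\mathcal{K}) + \deg(\mathcal{I}) = \deg(E)$; summing the two inequalities yields $\deg(E) < \mu(E)\bigl(rk(\mathcal{K}) + rk(\mathcal{I})\bigr) = \mu(E)\, r = \deg(E)$, a contradiction. Hence $f = \lambda\, Id$ and $E$ is simple.

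The main obstacle I anticipate is justifying the additivity of the degree and its monotonicity under saturation, since in the foliated setting $\deg$ is not defined naively but through the transverse resolution of singularities of Definition \ref{def:degreesubsheaf}. For additivity I would work on the open set $X \setminus S$ where $\mathcal{K}$, $\mathcal{I}$ and $\mathcal{O}(E)$ are all locally free, using Proposition \ref{prop:cw} to compute each degree as $\frac{i}{2\pi}\int_{X\setminus S} tr(F_\bullet) \wedge \omega^{n-1} \wedge \chi$: for the induced metrics on a holomorphic subbundle and quotient the second fundamental form $\beta$ contributes $tr(\beta^*\wedge\beta) + tr(\beta\wedge\beta^*) = 0$ to the trace of the curvature, so the first Chern forms satisfy $tr(F_E) = tr(F_{\mathcal{K}}) + tr(F_{\mathcal{I}})$ pointwise on $X \setminus S$ and additivity follows after integration. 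Monotonicity under saturation should follow because the inclusion $\mathcal{I} \hookrightarrow \overline{\mathcal{I}}$ of equal-rank sheaves determines a nonzero basic holomorphic section of $\det(\overline{\mathcal{I}}) \otimes \det(\mathcal{I})^*$, forcing $\deg(\overline{\mathcal{I}}) \ge \deg(\mathcal{I})$; verifying these two facts carefully in the transverse framework is where the real work lies.
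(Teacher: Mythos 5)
Your proposal follows the same classical skeleton as the paper's proof: constancy of basic holomorphic functions on compact $X$, passage to $g = f - \lambda\, Id_E$ which is everywhere singular, and a slope contradiction extracted from $\ker(g)$ and $\mathrm{im}(g)$. The paper organizes the first half differently — it uses constancy of the characteristic polynomial to split $E$ into generalized eigenbundles (honest transverse holomorphic subbundles, where degree additivity is just additivity for direct sums of bundles) and only then confronts the nilpotent part — but your shortcut via $\det(g)\equiv 0$ is legitimate. The real issue is in the two degree facts you defer to the last paragraph; there the proposal has a genuine gap.

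Your monotonicity step asserts that the inclusion $\mathcal{I}\hookrightarrow\overline{\mathcal{I}}$ yields a nonzero basic holomorphic section of $\det(\overline{\mathcal{I}})\otimes\det(\mathcal{I})^*$ and hence $\deg(\overline{\mathcal{I}})\ge\deg(\mathcal{I})$. This argument is not available in the paper's framework. Definition \ref{def:degreesubsheaf} assigns a degree \emph{only} to transverse coherent subsheaves of $E$ with torsion-free quotient — which $\mathcal{I}=\mathrm{im}(g)$ precisely fails to be — and the paper emphasizes that the determinant of a transverse coherent sheaf need not admit any transverse Hermitian metric; that is the entire reason for the resolution-of-singularities detour of Section \ref{sec:trressing}. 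So neither $\deg(\det(\mathcal{I}))$, nor $\deg(\det(\overline{\mathcal{I}})\otimes\det(\mathcal{I})^*)$, nor the principle ``a transverse line bundle with a nonzero section has non-negative degree'' is defined or established here; appealing to them is circular. Relatedly, your additivity $\deg(\mathcal{K})+\deg(\mathcal{I})=\deg(E)$ is proved with $\mathcal{I}$ carrying the \emph{quotient} metric of $\mathcal{O}(E)/\mathcal{K}$, whereas $\deg(\overline{\mathcal{I}})$ via Proposition \ref{prop:cw} is computed with the metric \emph{induced from} the inclusion $\overline{\mathcal{I}}\subset E$; the isomorphism $\mathcal{O}(E)/\mathcal{K}\cong\mathcal{I}$ furnished by $g$ is not an isometry and degenerates along the singular locus, so the two integrals differ by a Lelong-type contribution of that locus whose non-negativity is exactly the inequality you need. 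In short, your chain of inequalities uses two inequivalent meanings of $\deg(\mathcal{I})$, and the bridge between them (a transverse Poincar\'e--Lelong argument, or a foliated version of ``torsion quotients contribute non-negatively to degree'') is the missing idea. To be fair, the paper's own treatment of the nilpotent case is terse on exactly this point — it calls $\ker(g)$ and $\mathrm{im}(g)$ subbundles, which in complex codimension $n\ge 2$ they need not be — but a completion along your lines must supply this bridge rather than the determinant-bundle shortcut.
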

\begin{proof}
Let $f : E \to E$ be a basic holomorphic endomorphism of $E$. The coefficients of the characteristic polynomial of $f$ are basic holomorphic functions on $X$ and therefore constant since $X$ is assumed to be compact. Therefore $f$ has constant eigenvalues and we can decompose $E$ into the generalised eigenspaces of $f$:
\[
E = \bigoplus_{\lambda} E_\lambda.
\]
If $f$ has more than one distinct eigenvalue, then at least one summand $E_\lambda$ will have $\mu(E_\lambda) \ge \mu(E)$, contradicting stability. Therefore $f$ can only have one eigenvalue, say $\lambda$. Let $g = f - \lambda Id_E$. Then $g : E \to E$ is nilpotent. To complete the proof, it suffices to show $g = 0$. Suppose $g \neq 0$. Then $Ker(g)$ is a proper, non-trivial subbundle of $E$ and so $\mu( Ker(g) ) < \mu(E)$ by stability of $E$. But this implies $\mu( Im(g) ) = \mu( E/Ker(g) ) > \mu(E)$. If $g$ is a non-zero nilpotent endomorphism of $E$, then $Im(g)$ is a proper, non-trivial subbundle of $E$ contradicting stability. Thus $g = 0$.
\end{proof}

\section{Transverse Hermitian-Einstein connections}\label{sec:trhe}

Let $X$ be a compact oriented, taut, transverse Hermitian foliation of complex codimension $n$ with transverse Gauduchon metric $g$. 

\begin{definition}\label{def:he}
\leavevmode
\begin{itemize}
\item[(i)]{Let $E$ be a transverse Hermitian bundle. A basic unitary connection $A$ on $E$ is called {\em transverse Hermitian-Einstein} if its curvature $2$-form $F_A$ is of type $(1,1)$ and satisfies
\[
i \Lambda F_A = \gamma_A id_E,
\]
for some real constant $\gamma_A$, called the {\em Einstein factor} of $A$.}
\item[(ii)]{Let $E$ be a transverse holomorphic bundle. A transverse Hermitian metric is called a {\em transverse Hermitian-Einstein metric} if the associated Chern connection is transverse Hermitian-Einstein.}
\end{itemize}
\end{definition}

\begin{definition}
Let $E$ be a transverse Hermitian bundle. A basic unitary connection $A$ on $E$ is called {\em irreducible} if the only $A$-covariantly constant sections of $End(E)$ are the constant multiples of the identity.
\end{definition}

\begin{proposition}\label{prop:einsteinfactor}
Let $A$ be a transverse Hermitian-Einstein connection on $E$. Then the Einstein factor of $A$ is given by
\[
\gamma_A = \frac{2\pi}{(n-1)! Vol(X) } \mu(E),
\]
where $Vol(X) = \int_X \frac{\omega^n}{n!} \wedge \chi.$
\end{proposition}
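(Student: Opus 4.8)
The plan is to reduce the statement to the pointwise K\"ahler-type identity $i\,n\,F_E \wedge \omega^{n-1} = K_E\,\omega^n$ recorded just after the definition of the mean curvature, combined with the definition of $\deg(E)$ and the formula for $Vol(X)$. Since $A$ is transverse Hermitian-Einstein, by definition its curvature $F_E = F_A$ satisfies $K_E = i\Lambda F_A = \gamma_A\, id_E$, so the whole argument is algebraic once this identity is in hand. No method-of-continuity or analytic input is needed; everything takes place pointwise on a single fixed metric.

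First I would take the fibrewise trace of $i\,n\,F_E \wedge \omega^{n-1} = K_E\,\omega^n$. Writing $r = rk(E)$ and using $tr(K_E) = tr(\gamma_A\, id_E) = \gamma_A\, r$, this produces the scalar basic $2n$-form identity
\[
i\,n\,tr(F_A)\wedge \omega^{n-1} = \gamma_A\, r\, \omega^n,
\]
equivalently $tr(F_A)\wedge\omega^{n-1} = \tfrac{-i\,\gamma_A\, r}{n}\,\omega^n$. I would then substitute this into Definition \ref{def:degslopebundle}:
\[
\deg(E) = \frac{i}{2\pi}\int_X tr(F_A)\wedge\omega^{n-1}\wedge\chi = \frac{i}{2\pi}\int_X \frac{-i\,\gamma_A\, r}{n}\,\omega^n\wedge\chi = \frac{\gamma_A\, r}{2\pi n}\int_X \omega^n\wedge\chi.
\]
Since $Vol(X) = \int_X \tfrac{\omega^n}{n!}\wedge\chi$, we have $\int_X \omega^n\wedge\chi = n!\,Vol(X)$, and hence $\deg(E) = \tfrac{\gamma_A\, r\,(n-1)!\,Vol(X)}{2\pi}$. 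Dividing by $r$ gives $\mu(E) = \tfrac{\gamma_A\,(n-1)!\,Vol(X)}{2\pi}$, which rearranges to $\gamma_A = \tfrac{2\pi}{(n-1)!\,Vol(X)}\,\mu(E)$, as claimed.

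There is essentially no obstacle here: the integrand is smooth and basic, so the computation is just a bookkeeping of the constants $i$, $n$, and $n!$. The only subtlety is that $\deg(E)$ must be a well-defined invariant, independent of the chosen transverse Hermitian metric; but this was already established in Definition \ref{def:degslopebundle} using the transverse Gauduchon and tautness hypotheses, so I am free to evaluate $\deg(E)$ using the Hermitian-Einstein metric itself. This is exactly what the above computation does.
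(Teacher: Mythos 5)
Your proof is correct, and it is precisely the argument the paper has in mind: the paper's proof simply says ``Same as in the non-foliated setting,'' which is exactly this computation of tracing the pointwise identity $i\,n\,F_E \wedge \omega^{n-1} = K_E\,\omega^n$, substituting $K_E = \gamma_A\,id_E$, wedging with $\chi$, integrating, and bookkeeping the constants against Definition \ref{def:degslopebundle}. Your remark that well-definedness of $\deg(E)$ (from tautness and the Gauduchon condition) licenses evaluating the degree with the Hermitian-Einstein metric itself is the right observation and is the only point where the foliated setting enters.
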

\begin{proof}
Same as in the non-foliated setting. 
\end{proof}
Note that by this proposition $\gamma_A$ only depends on $E$ and not on the connection $A$. Thus we will often denote the Einstein factor as $\gamma_E$ and call it the Einstein factor of $E$.

\begin{theorem}\label{thm:vanishing}
Let $E$ be a transverse holomorphic bundle admitting a transverse Hermitian-Einstein metric $h$ with Chern connection $A$. If $\deg(E)$ is negative, then $E$ has no global basic holomorphic sections. If $\deg(E)$ is zero, then every global basic holomorphic section of $E$ is $d_A$-constant.
\end{theorem}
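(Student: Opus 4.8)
The plan is to establish a transverse Bochner--Weitzenböck identity for the non-negative basic function $|s|^2 = h(s,s)$ attached to a global basic holomorphic section $s$, and then to integrate it against the Gauduchon volume. The sign of $\deg(E)$ enters only through the Einstein factor: by Proposition \ref{prop:einsteinfactor} we have $\gamma_E = \frac{2\pi}{(n-1)!\,Vol(X)}\mu(E)$ with $\mu(E)=\deg(E)/rk(E)$, so $\gamma_E$ has the same sign as $\deg(E)$ and vanishes exactly when $\deg(E)=0$. Thus it suffices to prove the vanishing statement under $\gamma_E<0$ and the parallelism statement under $\gamma_E=0$.

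First I would derive the pointwise identity
\[
P(|s|^2) \;=\; i\Lambda\,\overline{\partial}\partial\,|s|^2 \;=\; \gamma_E\,|s|^2 - |\partial_E s|^2 .
\]
Being local in the transverse directions, this computation is formally identical to the classical one. Since $\overline{\partial}_E s = 0$, compatibility of the Chern connection $A=\partial_E+\overline{\partial}_E$ with $h$ gives $\partial|s|^2 = \langle \partial_E s, s\rangle$, and a second application of the Leibniz rule yields $\overline{\partial}\partial|s|^2 = \langle \overline{\partial}_E\partial_E s, s\rangle - \langle \partial_E s,\partial_E s\rangle$. Because $F_A$ is of type $(1,1)$ and $\overline{\partial}_E s=0$, one has $\overline{\partial}_E\partial_E s = F_A s$; applying $i\Lambda$, using the transverse Hermitian--Einstein equation $i\Lambda F_A = \gamma_E\,id_E$ and the pointwise relation $i\Lambda\langle \partial_E s,\partial_E s\rangle = |\partial_E s|^2$, gives the displayed formula.

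Next I would integrate the identity over $X$ against $dvol_X = \frac{\omega^n}{n!}\wedge\chi$. Since we work with the transverse Gauduchon metric, the constant function lies in $Ker(P^*)$: indeed $P^*(1) = -\frac{1}{(n-1)!}\,i*_B\,\partial\overline{\partial}(\omega^{n-1}) = 0$ by Definition \ref{def:trgauduchon}. Equivalently, writing $i\Lambda(\overline{\partial}\partial f)\,\frac{\omega^n}{n!} = i\,\overline{\partial}\partial f\wedge\frac{\omega^{n-1}}{(n-1)!}$ and integrating by parts via the basic Stokes' theorem transfers the operator onto $\omega^{n-1}$, where it dies by $\partial\overline{\partial}\omega^{n-1}=0$. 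Either way $\int_X P(|s|^2)\,dvol_X = 0$, and the identity gives
\[
\gamma_E \int_X |s|^2\,dvol_X \;=\; \int_X |\partial_E s|^2\,dvol_X \;\ge\; 0 .
\]
If $\deg(E)<0$ then $\gamma_E<0$, so the left-hand side is $\le 0$; hence both sides vanish and, as $\gamma_E\neq0$, we get $\int_X|s|^2\,dvol_X=0$, forcing $s\equiv 0$. If $\deg(E)=0$ then $\gamma_E=0$, so $\int_X|\partial_E s|^2\,dvol_X=0$, whence $\partial_E s=0$; combined with $\overline{\partial}_E s=0$ this gives $d_A s = \partial_E s + \overline{\partial}_E s = 0$, i.e. $s$ is $d_A$-constant.

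The genuinely foliated input is twofold, and is exactly where I expect care to be needed rather than new ideas. The validity of the integration by parts rests on the basic Stokes' theorem, hence on tautness of the foliation, and the vanishing of $\int_X P(|s|^2)\,dvol_X$ rests on the transverse Gauduchon property secured by Theorem \ref{thm:trgauduchon} (equivalently on the structure of $Ker(P^*)$ from Corollary \ref{cor:ppstar}). The Weitzenböck identity itself carries over verbatim from the classical computation, so the main obstacle is bookkeeping: confirming that $|s|^2$, $|\partial_E s|^2$ and $P$ are genuinely basic and that the adjoint and Stokes manipulations are the basic ones developed in Section \ref{sec:tg}.
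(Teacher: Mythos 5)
Your argument is correct, and it rests on exactly the same transverse Weitzenb\"ock identity that drives the paper's proof, namely
\[
P\bigl(h(s,s)\bigr) \;=\; i\Lambda h(\overline{\partial}_E\partial_E s, s) - i\Lambda h(\partial_E s,\partial_E s) \;=\; \gamma_E\,|s|^2 - |\partial_E s|^2 ,
\]
derived just as you do; the difference lies in how the conclusion is extracted from it. The paper argues pointwise via the maximum principle: when $\gamma_E \le 0$ the identity gives $P(|s|^2)\le 0$, so on the compact manifold $X$ the basic function $|s|^2$ is constant (the same use of the maximum principle as in Lemma \ref{lem:kerimp}), whence $\gamma_E|s|^2 = |\partial_E s|^2$ pointwise and the sign of $\gamma_E$ finishes the argument. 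You instead integrate the identity over $X$ and show $\int_X P(|s|^2)\,dvol_X = 0$, using the transverse Gauduchon condition together with the basic Stokes' theorem, equivalently $P^*(1)=0$ via Lemma \ref{lem:pstar}; this step is sound, and your sign bookkeeping for $P^*$ is consistent with that lemma. Comparing the two: your integral route re-uses tautness and the Gauduchon property at the integration step, which is precisely the mechanism behind the Chern--Weil/degree computations elsewhere in the paper (for instance the proof of Theorem \ref{thm:heispolystable}), and it is the version of the argument that survives in low-regularity situations; the paper's route is purely local-to-global, and the maximum-principle step itself needs neither tautness nor the Gauduchon condition --- those hypotheses enter both proofs only through Proposition \ref{prop:einsteinfactor}, which ties the sign of $\gamma_E$ to the sign of $\deg(E)$. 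So the paper's proof is marginally more economical in what it invokes, while yours makes the role of the standing assumptions (taut plus transverse Gauduchon) completely transparent.
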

\begin{proof}
Let $s$ be a basic holomorphic section of $E$. Then one finds:
\begin{equation*}
\begin{aligned}
P( h(s,s) ) &= i\Lambda \overline{\partial} \partial h(s,s) \\
&= i\Lambda h( \overline{\partial}_E \partial_E s , s) - i\Lambda h( \partial_E s , \partial_E s) \\
&= i\Lambda h( F_E s , s) - | \partial_E s|^2 \\
&= \gamma_E |s|^2 - |\partial_E s|^2.
\end{aligned}
\end{equation*}
The maximum principle now implies that if $\gamma_E < 0$, then $s= 0$ and if $\gamma_E = 0$, then $\partial_E s = 0$. But in this case $s$ is holomorphic, so $\overline{\partial}_E s = 0$ and hence $\nabla_E s = 0$.
\end{proof}

\begin{proposition}\label{prop:unique}
Let $E$ be a transverse holomorphic bundle which is simple. If a transverse Hermitian-Einstein metric exists on $E$, it is unique up to rescaling by a positive constant.
\end{proposition}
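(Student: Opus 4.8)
The plan is to adapt the classical uniqueness argument (as in Kobayashi or \cite{lt}) using the basic maximum principle packaged in the $P$-operator. Suppose $h_0$ and $h_1$ are two transverse Hermitian-Einstein metrics on $E$. By Proposition \ref{prop:einsteinfactor} their Einstein factors coincide, both equal to $\gamma_E$. I would then form the endomorphism $f = h_0^{-1} h_1 \in \Omega^0_B(End(E))$; it is a basic section of $End(E)$, positive-definite and self-adjoint with respect to $h_0$, so $\psi = tr(f)$ is a basic real-valued function.

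First I would record the curvature comparison formula. Writing $\partial_E$ for the $(1,0)$-part of the Chern connection of $h_0$ on $End(E)$ and $\overline{\partial}_E$ for its $(0,1)$-part, a computation in a local basic holomorphic frame (identical to the non-foliated case, being pointwise in the transverse directions) gives $F_{h_1} = F_{h_0} + \overline{\partial}_E( f^{-1} \partial_E f )$. Applying $i\Lambda$ and using that both Chern connections are transverse Hermitian-Einstein with the same factor $\gamma_E$, the mean-curvature terms cancel and we obtain $i\Lambda \, \overline{\partial}_E( f^{-1} \partial_E f ) = 0$.

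Next I would apply the $P$-operator to $\psi$. Since trace commutes with covariant differentiation, $\partial \psi = tr(\partial_E f) = tr(f v)$ with $v = f^{-1} \partial_E f$, whence $P\psi = i\Lambda \overline{\partial}\partial \psi = tr\big( i\Lambda( (\overline{\partial}_E f) \wedge v ) \big) + tr\big( f \cdot i\Lambda \overline{\partial}_E v \big)$. The second term vanishes by the identity of the previous paragraph. In the first term, self-adjointness of $f$ together with compatibility of the Chern connection with $h_0$ yields $\overline{\partial}_E f = (\partial_E f)^{\ast}$, so that $P\psi$ equals a pointwise expression of one definite sign, namely $\pm | f^{-1/2} \partial_E f |^2_{h_0}$, which vanishes exactly where $\partial_E f = 0$. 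Thus $P\psi$ is a basic function of constant sign lying in the image of $P$, so by Lemma \ref{lem:kerimp} it vanishes identically; hence $\partial_E f = 0$, and by self-adjointness $\overline{\partial}_E f = 0$ as well. Therefore $f$ is a basic holomorphic endomorphism of $E$, and since $E$ is simple, $f = c \, Id_E$ for a constant $c$, which is positive because $f$ is positive-definite. We conclude $h_1 = c \, h_0$.

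I expect the main obstacle to be the sign bookkeeping in the computation of $P\psi$ — verifying that it is genuinely of one sign — together with confirming that Lemma \ref{lem:kerimp} is precisely the tool that closes the argument, since the $P$-operator is not self-adjoint in the merely Gauduchon setting. The curvature comparison identity and the remaining Hermitian linear algebra are all pointwise in the transverse directions and carry over verbatim from the non-foliated case; the genuinely foliated inputs are the transverse ellipticity and index theory of $P$ (Lemma \ref{lem:ind0}) and the basic maximum principle (Lemma \ref{lem:kerimp}, Corollary \ref{cor:ppstar}), all of which have already been established.
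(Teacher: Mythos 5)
Your proof is correct, and it reaches the conclusion by a different route than the paper. The paper deduces uniqueness from Theorem \ref{thm:vanishing}: the identity map $Id : (E,h_1) \to (E,h_2)$ is a basic holomorphic section of $Hom((E,h_1),(E,h_2))$, which carries an induced transverse Hermitian-Einstein metric with Einstein factor zero because $\deg(End(E)) = 0$; the vanishing theorem then makes $Id$ covariantly constant, so the two Chern connections are literally equal, the endomorphism relating the metrics is holomorphic, and simplicity finishes. You instead work directly with $f = h_0^{-1}h_1$ and the scalar $\psi = tr(f)$, using the curvature comparison formula and equality of the Einstein factors (Proposition \ref{prop:einsteinfactor}) to show that $P\psi$ is a sign-definite basic function vanishing exactly where $\partial_E f = 0$, and then close with Lemma \ref{lem:kerimp}. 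These are the same analysis in different packaging: Theorem \ref{thm:vanishing} is itself proved by precisely your $P$-operator computation, and applied to the section $Id$, whose squared norm in the $Hom$ metric is exactly $tr(f)$, it reproduces your inequality. What the paper's route buys is brevity (it reuses an established theorem) together with the slightly stronger intermediate conclusion that the two Chern connections coincide, not merely that $f$ is holomorphic; what your route buys is a self-contained argument that never leaves $End(E)$ and that makes explicit where the maximum-principle input enters --- correctly through Lemma \ref{lem:kerimp}, since, as you note, $P$ is not self-adjoint in the Gauduchon setting. Your one hedge, the sign of the first-order term, resolves with the paper's conventions to $P\psi = -\,|f^{-1/2}\partial_E f|^2 \le 0$, using $tr(\alpha^* f^{-1}\alpha) = |f^{-1/2}\alpha|^2$ for positive definite $f$, so the argument closes exactly as you expect.
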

\begin{proof}
Let $h_1,h_2$ be two transverse Hermitian-Einstein metrics on $E$. The identity on $E$, viewed as a map $Id : (E,h_1) \to (E,h_2)$ is a holomorphic section of $Hom( (E,h_1) , (E,h_2))$. But $h_1$ and $h_2$ induce a Hermitian-Einstein metric on $Hom( (E,h_1) , (E, h_2))$, so by Theorem \ref{thm:vanishing} we have that $Id : (E , h_1) \to (E , h_2)$ is covariantly constant (since $\deg( End(E) ) = 0$). This means that the unitary connections associated to $h_1$ and $h_2$ are related by $Id$, so they are equal. In particular, this gives $\partial_{E,h_1} = \partial_{E , h_2}$. Now let $f : E \to E$ be the unique self-adjoint endomorphism of $E$ for which $h_2(s,t) = h_1( f(s) , t)$. The equality $\partial_{E,h_1} = \partial_{E,h_2}$ implies $\partial_{E,h_1} f = 0$ and since $f$ is self-adjoint, we also get $\overline{\partial}_E f = 0$. But if $E$ is simple, this implies that $f$ is a multiple of the identity and hence $h_1$ and $h_2$ are related by a constant rescaling.
\end{proof}

Next we have the following transverse version of the Bogomolov inequality \cite{bog}:
\begin{theorem}
Let $E$ be a transverse Hermitian bundle of rank $r$ admitting a transverse Hermitian-Einstein connection and suppose that $n \ge 2$. Then we have the following inequality
\begin{equation}\label{equ:bog}
\int_X \left( 2r \cdot c_{2,B}(E) - (r-1) \cdot c_{1,B}^2(E) \right) \wedge \omega^{n-2} \wedge \chi \ge 0,
\end{equation}
where $c_{1,B}(E), c_{2,B}(E)$ are the basic Chern forms of degree $1$ and $2$. That is, for any transverse Hermitian metric $h$ with associated Chern connection $A$, we define
\[
c_{1,B}(E,A) = \frac{i}{2\pi} tr(F_A), \quad c_{2,B}(E,A) = -\frac{1}{8\pi^2} \left( (tr(F_A))^2 - tr(F_A^2) \right),
\]
where $F_A$ is the curvature of $A$. The basic Chern forms are independent of the choice of transverse Hermitian metric $h$ up to $\partial \overline{\partial}$-exact terms, so if $X$ is Gauduchon, the left hand side of (\ref{equ:bog}) is independent of the choice of $h$. Moreover, equality holds if and only if $A$ is projectively flat.
\end{theorem}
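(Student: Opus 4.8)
The plan is to reduce the inequality, exactly as in the Kähler case, to the positivity of the transverse $L^2$-norm of the trace-free part of the curvature, checking at each step that the construction passes to the foliated setting.

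First I would rewrite the integrand purely in terms of $F_A$. From the definitions of $c_{1,B}$ and $c_{2,B}$ one obtains the pointwise identity of basic $4$-forms
\[
2r\,c_{2,B}(E,A) - (r-1)\,c_{1,B}^2(E,A) = \frac{1}{4\pi^2}\bigl( r\, tr(F_A\wedge F_A) - tr(F_A)\wedge tr(F_A)\bigr).
\]
Decomposing $F_A = \hat{F} + \tfrac{1}{r}\,tr(F_A)\,id_E$ into its trace-free part $\hat F$ (so $tr(\hat F)=0$) and trace part, the cross terms drop out and one finds $tr(F_A\wedge F_A) = tr(\hat F\wedge \hat F) + \tfrac{1}{r}\, tr(F_A)\wedge tr(F_A)$, so the right-hand side collapses to $\tfrac{r}{4\pi^2}\,tr(\hat F\wedge \hat F)$. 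Thus the theorem is equivalent to $\int_X tr(\hat F\wedge \hat F)\wedge \omega^{n-2}\wedge \chi \ge 0$.

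Next I would invoke the transverse Hermitian-Einstein condition. Taking the trace of $i\Lambda F_A = \gamma_A\, id_E$ gives $i\Lambda\, tr(F_A) = r\gamma_A$, so the trace part of $i\Lambda F_A$ equals $\gamma_A\, id_E$ and therefore $i\Lambda \hat F = 0$; that is, $\hat F$ is a primitive basic $(1,1)$-form with values in the skew-Hermitian endomorphisms of $E$, and in a local basic unitary frame each matrix entry $\hat F^{a}_{\,b}$ is a primitive basic $(1,1)$-form with $\overline{\hat F^{b}_{\,a}} = -\hat F^{a}_{\,b}$. The heart of the argument is then the purely algebraic Hodge--Riemann relation on the transverse Hermitian vector space $H$, namely $*_B\beta = -\tfrac{1}{(n-2)!}\,\omega^{n-2}\wedge \beta$ for any primitive basic $(1,1)$-form $\beta$. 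Combining this with $*_B\overline{\beta} = \overline{*_B\beta}$ and the skew-Hermitian symmetry yields, as an identity of basic $2n$-forms,
\[
tr(\hat F\wedge \hat F)\wedge \frac{\omega^{n-2}}{(n-2)!} = \sum_{a,b}\bigl|\hat F^{a}_{\,b}\bigr|^2\,\frac{\omega^n}{n!} = |\hat F|^2\,\frac{\omega^n}{n!},
\]
where $|\cdot|$ is the transverse norm. Multiplying by $\chi$ and integrating over $X$ gives $\int_X tr(\hat F\wedge \hat F)\wedge \omega^{n-2}\wedge \chi = (n-2)!\int_X |\hat F|^2\,\tfrac{\omega^n}{n!}\wedge \chi \ge 0$, which is the desired inequality. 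The equality case is immediate: the integral vanishes if and only if $\hat F \equiv 0$, i.e. $F_A = \tfrac{1}{r}\,tr(F_A)\,id_E$ is a scalar $2$-form times the identity, which is precisely the condition that $A$ be projectively flat.

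I expect the only genuine work to be checking the pointwise identity and its sign transversally. One must confirm that the linear-algebraic Hodge--Riemann relation for primitive $(1,1)$-forms holds verbatim for the transverse Hermitian structure on $H$ (this is algebra on a single Hermitian vector space, so it is immediate) and that the skew-Hermitian symmetry of the unitary curvature produces the correct positive sign after summing over the matrix entries. Everything else — the reduction to the trace-free part, the vanishing of $\Lambda \hat F$, and the passage from a basic $2n$-form identity to an integral over $X$ by wedging with $\chi$ — is formal and relies only on the transverse Hodge theory and the notion of basic forms already established; in particular, the inequality itself needs no integration by parts, while metric independence of the left-hand side has already been recorded using basic Stokes, tautness and the Gauduchon condition.
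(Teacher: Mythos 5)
Your proposal is correct, and it is essentially the paper's own argument: the paper reduces the statement to a pointwise inequality and cites the local computation of L\"ubke--Teleman (Theorem 2.2.3) as carrying over verbatim to the transverse setting, and that local computation is exactly what you have written out (decomposition into the trace-free part $\hat F$, primitivity of $\hat F$ from the Hermitian--Einstein condition, the sign identity $*_B\beta = -\frac{1}{(n-2)!}\,\omega^{n-2}\wedge\beta$ for primitive basic $(1,1)$-forms, and the skew-Hermitian symmetry giving $tr(\hat F\wedge\hat F)\wedge\frac{\omega^{n-2}}{(n-2)!} = |\hat F|^2\,\frac{\omega^n}{n!}$), with the equality case $\hat F\equiv 0$ being precisely projective flatness. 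Your observation that no integration by parts is needed for the inequality itself, only for the metric-independence of the left-hand side, also matches the structure of the paper's treatment.
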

\begin{proof}
The inequality (\ref{equ:bog}) is obtained by integrating over $X$ a pointwise inequality
\[
\left( 2r \cdot c_{2,B}(E) - (r-1) \cdot c_{1,B}^2(E) \right) \wedge \omega^{n-2} \wedge \chi \ge 0,
\]
where $\ge 0$ means that the left hand side is a non-negative multiple of $dvol_X$. This pointwise inequality is obtained by a local computation which is no different than in the non-foliated setting \cite[Theorem 2.2.3]{lt}.
\end{proof}

\begin{theorem}\label{thm:heispolystable}
Let $E$ be a transverse holomorphic vector bundle on $X$ which admits a transverse Hermitian-Einstein metric. Then $E$ is polystable.
\end{theorem}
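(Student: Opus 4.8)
The plan is to follow the classical argument that a Hermitian-Einstein metric forces polystability (as in Kobayashi and in \cite{lt}), adapting each analytic step to the taut foliated setting. The argument splits into two parts: first show that $E$ is semistable, and then upgrade semistability to polystability by analysing the case of equality of slopes. Throughout I would fix the transverse Hermitian-Einstein metric $h$, with Chern connection $A$ and Einstein factor $\gamma_E$, so that $i\Lambda F_A = \gamma_E\,\mathrm{id}_E$, and I would use Proposition \ref{prop:einsteinfactor} to record $\gamma_E = \frac{2\pi}{(n-1)!\,Vol(X)}\mu(E)$.

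For semistability, let $\mathcal F\subset\mathcal O(E)$ be any transverse coherent subsheaf with $0<s=rk(\mathcal F)<r$ and torsion-free quotient. By Proposition \ref{prop:cw} there is a transverse analytic subvariety $S\subset X$ of complex codimension at least $2$ such that $\mathcal F$ is represented over $X\setminus S$ by a transverse holomorphic subbundle $F\subset E$, and $\deg(\mathcal F)$ is computed by integrating the curvature $F_1$ of the induced Chern connection over $X\setminus S$. I would introduce the orthogonal projection $\pi$ onto $F$, a self-adjoint basic section of $End(E)$ over $X\setminus S$ with $\pi^2=\pi$ and $tr(\pi)=s$. The Gauss--Codazzi identity for a holomorphic subbundle is a purely local, hence transverse, computation and yields the pointwise identity $i\Lambda\,tr(F_1) = tr\big(\pi\, i\Lambda F_A\big) - |\overline{\partial}_E\pi|^2$. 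Substituting the Einstein condition and integrating gives
\[
\deg(\mathcal F) = s\,\mu(E) - \frac{1}{2\pi n}\int_{X\setminus S}|\overline{\partial}_E\pi|^2\,\omega^n\wedge\chi,
\]
so $\mu(\mathcal F)\le\mu(E)$, with equality if and only if $\overline{\partial}_E\pi=0$ almost everywhere on $X\setminus S$. This is precisely semistability.

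To upgrade to polystability, I would argue as follows. If $E$ is stable there is nothing to prove, so suppose $E$ is semistable but not stable. Then there is a subsheaf $\mathcal F$ as above with $\mu(\mathcal F)=\mu(E)$, and the equality case above forces $\overline{\partial}_E\pi=0$ on $X\setminus S$. Since $\pi$ is self-adjoint, $\partial_E\pi=(\overline{\partial}_E\pi)^*=0$ as well, so $\pi$ is bounded ($|\pi|\le 1$) and covariantly constant on $X\setminus S$. As $S$ has complex codimension at least $2$, a transverse version of the Riemann removable-singularity theorem, applied in a foliated chart $U=V\times W$ where it reduces to the classical statement for bounded holomorphic maps on $V$, extends $\pi$ to a global basic holomorphic section of $End(E)$ that remains a self-adjoint parallel projection. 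This gives a transverse holomorphic, parallel, orthogonal splitting $E=F\oplus F'$, whence $A=A_F\oplus A_{F'}$ and both summands inherit transverse Hermitian-Einstein metrics with the same Einstein factor $\gamma_E$, and hence the same slope $\mu(E)$. Induction on the rank, with rank-one bundles automatically stable, then exhibits $F$ and $F'$ as direct sums of stable bundles of slope $\mu(E)$, so that $E$ is polystable.

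The hard part will be making the curvature computation and the subsequent integration by parts rigorous across the singular locus $S$: one must verify that $\overline{\partial}_E\pi$ is square-integrable on $X\setminus S$ and that the $\partial\overline{\partial}$-exact terms discarded in passing from the pointwise Chern--Weil identity to the global degree formula genuinely integrate to zero. This is exactly where tautness and the transverse Gauduchon condition are used, through the basic Stokes' theorem and $\partial\overline{\partial}(\omega^{n-1})=0$, together with the $L^1$-bound on $F_1$ from Proposition \ref{prop:cw} and the fact that $S$ has measure zero. The extension of $\pi$ across $S$ is the other delicate point, but it localises to the classical removable-singularity theorem on the transverse slices and so causes no genuine new difficulty.
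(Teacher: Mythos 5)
Your proposal is correct and follows essentially the same route as the paper's proof: the Gauss--Codazzi/Chern--Weil identity for the projection $\pi$ onto the subbundle representing $\mathcal{F}$ on $X\setminus S$, the degree formula of Proposition \ref{prop:cw}, substitution of the Einstein condition to get $\mu(\mathcal{F})\le\mu(E)$ with equality forcing $\pi$ to be parallel, extension of $\pi$ across the codimension-$\ge 2$ set $S$, and iteration of the resulting orthogonal holomorphic splitting. The only cosmetic differences are that you organize the argument as semistability followed by an equality-case upgrade, and you invoke the bounded Riemann removable-singularity theorem where the paper cites Hartogs' theorem; both extension arguments are valid here.
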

\begin{proof}
Let $\mathcal{F}$ be a coherent subsheaf of $E$ with $0 < s = rk(\mathcal{F}) < r = rk(E)$ and with torsion-free quotient. Then since the quotient is torsion free, there is some transverse analytic subvariety $S \subset X$ of complex codimension at least $2$ such that on the complement $X \setminus S$, the quotient is a vector bundle and therefore we also have that $\mathcal{F}$ is given by a holomorphic subbundle $F \subset E$ on $X \setminus S$. Let $\pi \in \mathcal{C}^\infty_B( X \setminus S , End(E) )$ be the orthogonal projection from $E$ to $F$, which is defined on $X \setminus S$. Let $F_1$ denote the curvature of the Chern connection on $F$ induced by the inclusion $F \to E$. Then we have (see \cite{gri}):
\[
i \Lambda tr( F_1 ) =  i\Lambda tr( \pi F_E \pi ) - | \partial_{End(E)} \pi |^2.
\]
Note that $tr(F_1)$ is in $L^1_B$ by Proposition \ref{prop:cw}. Wedging with $\omega^n \wedge \chi$ and integrating over $X$, we obtain:
\[
in \int_X tr(F_1) \wedge \omega^{n-1} \wedge \chi =  \int_X tr( \pi i \Lambda(F_E) \pi ) \wedge \omega^{n} \wedge \chi -  || \partial_{End(E)} \pi ||^2_{L^2}.
\]
Using Proposition \ref{prop:cw} and the fact that $E$ is Hermitian-Einstein, $i \Lambda F_E = \gamma_E Id_E$, we get
\begin{equation*}
\begin{aligned}
2\pi n \, \deg(\mathcal{F}) &= \int_X \gamma_E \, tr( \pi) \wedge \omega^n \wedge \chi -  || \partial_{End(E)} \pi ||^2_{L^2} \\
&= \gamma_E \, rk(\mathcal{F}) n! Vol(X) -  || \partial_{End(E)} \pi ||^2_{L^2} \\
&= 2 \pi n \, rk(\mathcal{F}) \mu(E) -  || \partial_{End(E)} \pi ||^2_{L^2},
\end{aligned}
\end{equation*}
where in the last line we used Proposition \ref{prop:einsteinfactor}. This shows that $\mu(\mathcal{F}) \le \mu(E)$ and that equality occurs if and only if $\partial_{End(E)} \pi = 0$. However, in this case since $\pi$ is self-adjoint, we also get that $\overline{\partial}_{End(E)} \pi = 0$, so $\pi$ is covariantly constant. In particular, $\pi$ is a basic holomorphic section of $End(E)$ defined on $X \setminus S$, but since $S$ has codimension at least $2$, Hartog's theorem implies that $\pi$ extends to a basic holomorphic section of $End(E)$ on $X$. The identities $\pi^2 = \pi$ and $\pi^* = \pi$ continue to hold on all of $X$, hence $\pi$ extends to $X$ as a projection. It follows that $F$ extends to a holomorphic subbundle on all of $E$. Moreover, since $\pi$ is covariantly constant, the orthogonal complement $F^\perp$ is also a holomorphic subbundle of $E$. We thus get an orthogonal, holomorphic splitting $E = F \oplus F^\perp$, where in addition $\mu(F) = \mu(F^\perp) = \mu(E)$. By iterating the above argument, we eventually get a decomposition of $E$ into a direct sum of stable transverse holomorphic bundles each having the same slope, i.e. $E$ is polystable.
\end{proof}

\begin{corollary}\label{cor:irredsimple}
Let $E$ be a transverse holomorphic vector bundle on $X$ which admits a transverse Hermitian-Einstein metric with Chern connection $A$. Then $A$ is irreducible if and only if $E$ is simple.
\end{corollary}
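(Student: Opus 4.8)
The plan is to prove both implications by passing to the induced transverse Hermitian-Einstein structure on $End(E)$ and then reducing everything to Theorem \ref{thm:vanishing}.

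First I would record the easy implication. The transverse Hermitian-Einstein metric $h$ on $E$ induces one on $End(E) = E \otimes E^*$, whose Chern connection is the connection induced by $A$; denote its $(0,1)$-part by $\overline{\partial}_{End(E)}$. If $f \in \Gamma_B(X, End(E))$ is $A$-covariantly constant, then $\nabla f = 0$ and in particular its $(0,1)$-component vanishes, so $\overline{\partial}_{End(E)} f = 0$ and $f$ is a basic holomorphic endomorphism of $E$. Hence, if $E$ is simple, every covariantly constant section of $End(E)$ lies in $\mathbb{C} Id$; since conversely $\mathbb{C} Id$ is manifestly covariantly constant, this shows that $A$ is irreducible. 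This direction uses only the definitions.

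For the converse I would exploit the fact that $End(E)$ is again transverse Hermitian-Einstein, but now with vanishing Einstein factor. Indeed, the induced connection on $End(E)$ has mean curvature $K_{End(E)}(\phi) = [K_E, \phi] = [\gamma_E Id, \phi] = 0$, so it is transverse Hermitian-Einstein with Einstein factor $0$; by Proposition \ref{prop:einsteinfactor} this forces $\deg(End(E)) = 0$. I may then apply the $\deg = 0$ case of Theorem \ref{thm:vanishing} to $End(E)$: every global basic holomorphic section of $End(E)$ is $d_A$-constant, i.e. $A$-covariantly constant. Consequently, if $A$ is irreducible then every basic holomorphic endomorphism of $E$ is a constant multiple of the identity, which is precisely the statement that $E$ is simple.

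I do not expect a genuine obstacle here: the entire argument is bookkeeping built on two already-established transverse results. The one point worth isolating is that $End(E)$ lands in the degree-zero hypothesis of Theorem \ref{thm:vanishing}---this is what converts holomorphicity into covariant constancy and makes the two notions coincide. All analytic input (the vanishing theorem, the maximum principle behind it, and the computation of the Einstein factor) has already been transplanted to the foliated setting earlier in the paper, so no new foliated analysis is required.
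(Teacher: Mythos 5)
Your proof is correct, but it takes a genuinely different route from the paper. The paper's own argument for the hard direction (irreducible $\Rightarrow$ simple) goes through stability: irreducibility plus the Hermitian-Einstein hypothesis gives polystability by Theorem \ref{thm:heispolystable}; a polystable-but-not-stable bundle would split orthogonally and holomorphically, producing covariantly constant projections that violate irreducibility, so $E$ is stable; and stable implies simple by Proposition \ref{prop:stableissimple}. You instead apply the vanishing theorem (Theorem \ref{thm:vanishing}) to $End(E)$, noting that the induced connection has Einstein factor $[\gamma_E Id, \cdot\,] = 0$, hence $\deg(End(E)) = 0$ by Proposition \ref{prop:einsteinfactor}, so every basic holomorphic endomorphism is covariantly constant and irreducibility finishes the argument. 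Your route is more elementary and self-contained: Theorem \ref{thm:vanishing} rests only on the maximum principle, whereas Theorem \ref{thm:heispolystable} invokes the Chern--Weil formula for transverse coherent subsheaves, Hartogs extension across the singular set, and implicitly the resolution-of-singularities machinery used to define degrees of subsheaves. In fact your argument is exactly the pattern the paper itself uses in Proposition \ref{prop:unique} (uniqueness of Hermitian-Einstein metrics on simple bundles), applied to $End(E)$ rather than $Hom((E,h_1),(E,h_2))$. What the paper's route buys in exchange is structural information: it exhibits the failure of simplicity as coming from an actual holomorphic orthogonal splitting of $E$, which is the geometric content behind the equivalence, and it places the corollary as an immediate consequence of the polystability theorem it directly follows in the text.
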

\begin{proof}
If $E$ is simple, then clearly $A$ is irreducible. Conversely if $E$ is transverse Hermitian-Einstein and irreducible, then $E$ is polystable by Theorem \ref{thm:heispolystable}. However, if $E$ is polystable but not stable, then clearly there would be covariantly constant sections of $End(E)$ which are not multiples of the identity, so $E$ is stable. But we have already shown in Proposition \ref{prop:stableissimple} that stable implies simple.
\end{proof}

\section{The transverse Hitchin-Kobayashi correspondence}\label{sec:trhk}

In this section we will prove that every stable transverse holomorphic vector bundle which admits a transverse Hermitian metric admits a transverse Hermitian-Einstein metric. Our proof will be an adaptation to the foliated setting of the proof of the usual Hitchin-Kobayashi correspondence given by Uhlenbeck-Yau \cite{uy}, using the method of continuity. A thorough treatment of the Uhlenbeck-Yau proof, adapted to the case of Gauduchon metrics is given in the book of L\"ubke and Teleman \cite{lt}. In what follows we will outline the main steps in the Uhlenbeck-Yau proof adapted to the foliated setting. We will mostly focus attention on the necessary changes required to adapt the proof to the foliated setting. We will omit details of the proof whenever they are essentially the same as in the non-foliated setting, referring the reader to the relevant sections of \cite{lt}.\\

Let $E$ be a stable transverse holomorphic vector bundle of rank $r$ which admits a transverse Hermitian metric. Let $h_0$ be a fixed choice of a transverse Hermitian metric. Let $d_0 = \partial_0 + \overline{\partial}$ be the associated Chern connection, $F_0$ the curvature of $d_0$ and $K_0 = i\Lambda F_0$ the mean curvature. Any transverse Hermitian metric $h$ on $E$ has the form $h(s,t) = h_0(fs,t)$ for a uniquely determined basic $h_0$-self-adjoint, positive definite endomorphism $f$. Conversely any such endomorphism $f$ determines a transverse Hermitian metric this way. The Chern connection associated to $h$ is $d_0 + f^{-1} \partial_0(f)$ and therefore
\[
K = K_0 + i \Lambda \left( \overline{\partial} ( f^{-1} \partial_0(f) ) \right).
\]
The Hermitian-Einstein equation for $h$ is therefore
\[
K_0 - \gamma Id_E + i \Lambda \left( \overline{\partial} ( f^{-1} \partial_0(f) ) \right) = 0,
\]
where $\gamma$ is the Einstein factor for $E$. Following Uhlenbeck-Yau we will find a solution of this equation by the continuity method. For a real number $\epsilon \in [0,1]$ consider the perturbed equation
\begin{equation}\label{eq:deformed}
L_\epsilon(f) = K_0 - \gamma Id_E + i \Lambda \left( \overline{\partial} ( f^{-1} \partial_0(f) ) \right) + \epsilon \cdot {\rm log}(f) =0.
\end{equation}
We first show the equation $L_\epsilon(f) = 0$ has a solution for all $\epsilon \in (0 , 1]$. More precisely, we will show there is a solution $f_1$ to $L_1(f_1) = 0$. We then let $J \subset (0,1]$ be defined as the set of $\epsilon \in (0,1]$ for which there is a map $f : [\epsilon , 1 ] \to Herm^+_B(E,h_0)$ such that $f(1) = f_1$ and $L_{\epsilon'}(f_{\epsilon'}) = 0$ for all $\epsilon' \in [\epsilon , 1]$ (c.f. \cite[\textsection 3.1]{lt}). From this definition, $J$ is an interval containing $1$. We will show that $J$ is open and closed in $(0,1]$ and therefore $J = (0,1]$. In particular, for every $\epsilon \in (0,1]$ we will have obtained a solution $f_\epsilon$ of the equation $L_\epsilon(f_\epsilon) = 0$. Next one considers the limit $\displaystyle{\lim_{\epsilon \to 0}} \; f_\epsilon$. If the limit $f_0 = \displaystyle{\lim_{\epsilon \to 0}} \; f_\epsilon$ exists, one shows that $h(s,t) = h_0(f_0 s , t)$ is a transverse Hermitian-Einstein metric. On the other hand if the limit does not exist, we will construct a transverse coherent sheaf violating the stability condition for $E$.

\subsection{Existence of $f_1$}\label{sec:f1}

We follow \cite[Lemma 3.2.1]{lt}. We will show that there is a transverse Hermitian metric $h_0$ on $E$ for which a solution $f_1$ to $L_1(f_1) = 0$ exists. First choose any transverse Hermitian metric $h$, let $K_h$ be the mean curvature and set $K^0_h = K_h - \gamma Id_E$. From the definitions of $\gamma$ and $K_h$, it follows that
\[
\int_X tr( K^0_h) \omega^n \wedge \chi = 0
\]
so that $tr(K^0_h)$ is $L^2$-orthogonal to the constant functions $\mathbb{C} = ker(P^*)$, hence $tr(K^0_h) \in im(P)$. So there exists a basic function $\varphi$ satisfying
\[
P(\varphi) = -\frac{1}{r} tr(K^0_h).
\]
Moreover, we may assume $\varphi$ is real-valued since $P$ and $tr(K^0_h)$ are real. Now define a new transverse Hermitian metric by $h_1 = e^\varphi h$. One finds $K^0_{h_1} = K^0_h + P(\varphi) Id_E$ and therefore $tr(K^0_{h_1}) = tr(K^0_h) + rP(\varphi) = 0$. Now let $h_0$ be the transverse Hermitian metric defined by
\[
h_0(s,t) = h_1( {\rm exp}( K_{h_1} )s , t ),
\]
which is a Hermitian metric because $K_{h_1}$ is Hermitian with respect to $h_1$. Note that $tr(K^0_{h_1}) = 0$ implies that $tr(K^0_{h_0}) = 0$ as well. Let $f_1 = {\rm exp}(-K^0_{h_1})$, which is positive and self-adjoint with respect to $h_0$, hence also with respect to $h_1$. Letting $L_\epsilon$ be defined with respect to $h_0$ as in Equation (\ref{eq:deformed}), we have
\[
L_1(f_1) = K_{h_1} - \gamma Id_E  - K^0_{h_1} = K^0_{h_1} - K^0_{h_1} = 0.
\]
Thus we have found a transverse Hermitian metric $h_0$ and a solution $f_1$ to $L_1(f_1) = 0$. From now on, we keep $h_0$ as our fixed choice of transverse Hermitian metric.

\subsection{Continuity method: $J$ is open}\label{sec:jopen}

Following \cite[Lemma 3.2.3]{lt}, we introduce an operator $\hat{L}(\epsilon , f)$, where $\epsilon \in [0,1]$ and $f \in Herm^+_B(E , h_0)$ by
\[
\hat{L}(\epsilon , f ) = f \circ L_\epsilon(f).
\]
Clearly $\hat{L}(\epsilon , f) = 0$ if and only if $L_\epsilon(f) = 0$. The advantage of using $\hat{L}(\epsilon , f)$ is that it takes values in $Herm_B(E , h_0)$ (the proof is the same as \cite[Lemma 3.2.3]{lt}). Since $\hat{L}$ is a second order differential operator which depends continuously on $\epsilon$, we may view it as a continuous map
\[
\hat{L} : (0,1] \times L^{p,k}_B ( Herm^+_B(E,h_0) ) \to L^{p,k-2}_B( Herm_B(E,h_0) ),
\]
for all sufficiently large $k$ so that the appropriate Sobolev multiplication theorems hold. Here we define $L^{p,k}_B ( Herm^+_B(E,h_0) )$ to be the interior of the closure of $Herm^+_B(E,h_0)$ in $L^{p,k}_B( Herm_B(E,h_0) )$. For all large enough $k$, the map $\hat{L}$ is moreover differentiable. Let $d_2 \hat{L}$ denote the derivative in the $f$-direction. Clearly this is a linear second order differential operator $d_2 \hat{L}(\epsilon , f) : L^{p,k}_B( Herm_B(E , h_0) ) \to L^{p,k-2}_B( Herm_B(E,h_0))$. It is easy to see that the symbol of $d_2 \hat{L}(\epsilon , f)$ agrees with the symbol of $P_{End(E)}$, which in turn is the symbol of the self-adjoint transverse elliptic operator $\partial_{End(E)}^* \partial_{End(E)}$. It follows that $d_2 \hat{L}(\epsilon , f)$ is transverse elliptic and has index zero, since the difference between $d_2 \hat{L}(\epsilon , f)$ and $\partial_{End(E)}^* \partial_{End(E)}$ is a first order operator, which is compact when regarded as an operator $L^{p,k}_B \to L^{p,k-2}_B$, by Sobolev compactness.\\

We may now argue that $J$ is open as follows (see \cite[Corollary 3.2.7]{lt}).
\begin{lemma}\label{lem:regularity1}
For all large enough $k$, the following holds: if $f$ is an $L^{p,k}_B$-solution of $L_\epsilon(f) = 0$, then $f$ is smooth and basic.
\end{lemma}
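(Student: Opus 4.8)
The plan is to run a standard elliptic bootstrap, substituting the basic elliptic regularity result (Lemma \ref{lem:trreg}) and the basic Sobolev multiplication and embedding theorems for their non-foliated counterparts; the argument then mirrors the corresponding regularity step in \cite{lt}. The starting point is the operator $\hat{L}(\epsilon,f)=f\circ L_\epsilon(f)$ introduced above. Expanding
\[
\overline{\partial}(f^{-1}\partial_0 f) = f^{-1}\overline{\partial}\partial_0 f - f^{-1}(\overline{\partial}f)f^{-1}\wedge\partial_0 f
\]
and applying $i\Lambda$, and using that the algebraic operator $\Lambda$ commutes with the zeroth-order factor $f^{-1}$, one sees that the second-order part of $\hat{L}(\epsilon,f)$ is exactly the fixed linear operator $L(f):=i\Lambda\overline{\partial}\partial_0 f$ on $Herm_B(E,h_0)$. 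Since its symbol coincides with that of $P_{End(E)}$ (as recorded in the discussion preceding the lemma), $L$ is second-order transverse elliptic and Lemma \ref{lem:trreg} applies to it. The equation $\hat{L}(\epsilon,f)=0$ therefore rewrites as $L(f)=N(f)$, where $N(f)$ collects the strictly lower-order terms, schematically $f\cdot(K_0-\gamma Id_E)$, $f\log f$, and the first-order quadratic term $f\, i\Lambda\big(f^{-1}(\overline{\partial}f)f^{-1}\wedge\partial_0 f\big)$.

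Next I would verify that $N$ improves regularity. For $k$ large enough that the basic Sobolev embedding gives $L^{p,k}_B\hookrightarrow\mathcal{C}^0_B$ and that $L^{p,k-1}_B$ is a multiplication algebra (i.e. $k-1>2n/p$), an $L^{p,k}_B$ metric $f$ is continuous and positive-definite, hence bounded away from the degenerate cone; consequently $f^{-1}$ and $\log f$ are again of class $L^{p,k}$ (composition of $f$ with a smooth function of a positive-definite matrix, controlled via the basic Sobolev multiplication theorem). The quadratic term is a product of the $L^{p,k}$ factors $f^{-1}$ with the two first-derivative factors $\overline{\partial}f,\partial_0 f\in L^{p,k-1}$, so basic Sobolev multiplication places it in $L^{p,k-1}$. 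Thus $N(f)\in L^{p,k-1}$.

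I would then bootstrap. Because Lemma \ref{lem:trreg} returns ordinary Sobolev regularity together with the condition $\nabla_V f=0$, rather than membership in $L^{p,k}_B$ directly, the induction is carried out in the spaces $L^{p,j}(E)\cap L^{p,1}_B(E)$, using the inclusion $L^{p,k}_B\subseteq L^{p,k}\cap L^{p,1}_B$ from the Remark following that lemma to initialise. Given $f\in L^{p,j}\cap L^{p,1}_B$ with $N(f)\in L^{p,j-1}$, Lemma \ref{lem:trreg} yields $f\in L^{p,j+1}\cap L^{p,1}_B$; re-examining $N$ with the improved regularity gives $N(f)\in L^{p,j}$, and the induction proceeds, producing $f\in L^{p,j}$ for every $j$, so that $f$ is smooth. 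Since $\nabla_V f=0$ is preserved at each stage, the smooth section $f$ satisfies $\nabla_V f=0$, which is exactly the condition that $f$ be basic.

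The main obstacle — and the only place where the foliated setting genuinely enters — is controlling the nonlinear terms $f^{-1}$ and $\log f$ inside the \emph{basic} Sobolev spaces, and ensuring that basicness is transported through the regularity theorem. One must know that positivity and continuity of $f$, together with the multiplicative and composition estimates, remain valid in the basic category, and that the condition $\nabla_V f=0$ is carried along by Lemma \ref{lem:trreg} at each step. Both points are supplied by the basic Sobolev multiplication and embedding theorems and by the structure of Lemma \ref{lem:trreg}; once these are in hand the argument is formally identical to the non-foliated one in \cite{lt}.
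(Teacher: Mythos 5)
Your proposal is correct and follows essentially the same route as the paper: the paper likewise rewrites $L_\epsilon(f)=0$ as $P(f)=\{f,\log(f),\partial_{End(E)}f,\overline{\partial}_{End(E)}f\}$ with $P=i\Lambda\overline{\partial}_{End(E)}\partial_{End(E)}$ the second-order transverse elliptic part, notes that for $k$ large the right-hand side lies in $L^{p,k-1}$, and then bootstraps via repeated application of Lemma \ref{lem:trreg} together with ordinary Sobolev embedding, concluding basicness from $\nabla_V f=0$. Your write-up simply makes explicit the details (positivity/continuity of $f$ controlling $f^{-1}$ and $\log f$, and the induction in $L^{p,j}\cap L^{p,1}_B$) that the paper leaves implicit.
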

\begin{proof}
The equation $L_\epsilon(f) = 0$ can be written in the form
\[
P(f) = \{ f , \log(f) , \partial_{End(E)} f , \overline{\partial}_{End(E)} f \},
\]
where $P = i \Lambda \overline{\partial}_{End(E)} \partial_{End(E)}$ and $\{ \cdots \}$ is some multilinear algebraic expression in its arguments. This means that for all sufficiently large $k$, $f \in L^{p,k}_B$ implies $P(f) \in L^{p,k-1}_B$. By Lemma \ref{lem:trreg}, this implies $f \in L^{p,k+1} \cap L^{p,1}_B$. By repeated application of Lemma \ref{lem:trreg} and the ordinary (i.e. non-basic) Sobolev embedding theorem, we have that $f$ is smooth. But $f \in L^{p,1}_B$ implies $\nabla_V (f) = 0$, where $\nabla_V$ is defined as in Lemma \ref{lem:trreg}. Hence $f$ is basic.
\end{proof}
Using this lemma and the Banach space implicit function theorem, to prove that $J$ is open it suffices to show that if $\hat{L}(\epsilon , f) = 0$, then $d_2 \hat{L}(\epsilon , f) : L^{2,k}_B( Herm_B(E , h_0) ) \to L^{2,k-2}_B( Herm_B(E,h_0))$ is an isomorphism of Banach spaces. We have already seen that $d_2 \hat{L}(\epsilon , f)$ is transverse elliptic with index zero. Hence it suffices to show that if $\hat{L}(\epsilon , f) = 0$ for some $\epsilon \in (0,1]$ and $f \in Herm_B^+(E,h_0)$, then the kernel of $d_2 \hat{L}(\epsilon , f)$ is trivial. Thus, suppose that $L_\epsilon(f) = 0$ and that $\varphi \in Herm_B(E , h_0)$ satisfies $d_2 \hat{L}(\epsilon , f) \varphi = 0$ (note that by an argument similar to the proof of Lemma \ref{lem:regularity1}, any element of the kernel of $d_2 \hat{L}(\epsilon , f)$ is smooth). By adapting \cite[Proposition 3.2.5]{lt} to the foliated setting, we obtain an inequality $P( |\eta |^2 ) + 2\epsilon |\eta|^2 \le 0$, where $\eta = f^{-1/2} \circ \varphi \circ f^{-1/2}$. By the maximum principle for basic functions, we find $|\eta| = 0$ and thus $\varphi  = 0$. We note that the proof of \cite[Proposition 3.2.5]{lt} makes use of local diagonalisation of self-adjoint endomorphisms of $E$ by unitary frames on a dense open subset (see \cite[Section 7.4]{lt}). One can check that these results extend in a straighforward manner to the foliated setting.

\subsection{Continuity method: $J$ is closed}\label{sec:jclosed}

Given the initial solution $f_1$ constructed in Section \ref{sec:f1}, we have that the equation $L_\epsilon(f) = 0$ has a unique solution on a maximal open interval $J = (\epsilon_0 , 1]$ for some $\epsilon_0 \ge 0$ (uniqueness follows from the Banach space implicit function theorem as in Section \ref{sec:jopen}). Let us also note that from the Banach space implicit function theorem we have that $f_\epsilon$ is differentiable in $\epsilon$. By \cite[Lemma 3.2.1]{lt}, which is easily seen to carry over to the foliated setting, we can assume that $det(f_\epsilon) = 1$ for all $\epsilon$. We will show that if $\epsilon_0 > 0$, then $f_{\epsilon_0} = \displaystyle{\lim_{\epsilon \to \epsilon_0}} \; f_\epsilon$ exists and is a solution of $L_{\epsilon_0}( f_{\epsilon_0}) = 0$. But the results of Section \ref{sec:jopen} would imply that there exists a solution on a larger open interval, contradicting maximality of $J$. Therefore, it follows that $J = (0,1]$.\\

Following \cite[Section 3.3]{lt}, we define
\[
m_\epsilon = {\rm max}_X | {\rm log}(f_\epsilon) |, \quad \varphi_\epsilon = \frac{d f_\epsilon}{d \epsilon}, \quad \eta_\epsilon = f_\epsilon^{-1/2} \circ \varphi_\epsilon \circ f_\epsilon^{-1/2}.
\]
The estimates in \cite[Section 3.3]{lt} carry over without difficulty to the foliated setting. In particular, we obtain the following results:
\begin{lemma}[\cite{lt}, Lemma 3.3.4]\label{lem:334}
Let $f \in Herm^+_B(E, h_0)$ satisfy $L_\epsilon(f) = 0$ for some $\epsilon > 0$. Then
\begin{itemize}
\item[(i)]{$\frac{1}{2} \cdot P( | {\rm log}(f)|^2) + \epsilon | {\rm log}(f)|^2 \le |K^0| \cdot | {\rm log}(f)|$.}
\item[(ii)]{$m_\epsilon \le \frac{1}{\epsilon} {\rm max}_X |K^0|$.}
\item[(iii)]{$m_\epsilon \le C \cdot \left( || {\rm log}(f) ||_{L^2} + {\rm max}_X |K^0| \right)^2$ where the constant $C$ depends on $g$ and $h_0$.}
\end{itemize}
\end{lemma}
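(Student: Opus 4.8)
The plan is to adapt the three estimates of \cite[Lemma 3.3.4]{lt} to the basic setting, the point being that each follows from a single pointwise inequality whose derivation is purely local in transverse holomorphic coordinates. Throughout write $K^0 = K_0 - \gamma\,\mathrm{Id}_E$ for the (fixed) shifted mean curvature of the background metric $h_0$, and for $f \in \mathrm{Herm}^+_B(E,h_0)$ let $K^0_h = K_h - \gamma\,\mathrm{Id}_E$ be the shifted mean curvature of the metric $h$ determined by $f$. Since $K_h = K_0 + i\Lambda\overline{\partial}(f^{-1}\partial_0 f)$, the defining equation $L_\epsilon(f)=0$ is precisely the statement $K^0_h = -\epsilon\log(f)$, and this is how the equation will enter all three estimates.

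For (i) the key input is the fundamental pointwise inequality
\[
\tfrac{1}{2}\,P\big(|\log f|^2\big) \le \big\langle K^0_h - K^0,\, \log f \big\rangle,
\]
where $\langle\,\cdot\,,\,\cdot\,\rangle$ and $|\cdot|$ are taken with respect to $h_0$, and $P = i\Lambda\overline{\partial}\partial$ acts on basic functions. I would obtain this by diagonalising the self-adjoint endomorphism $f$ in a local basic unitary frame and computing $i\Lambda\overline{\partial}\partial|\log f|^2$ in transverse holomorphic coordinates, discarding a manifestly nonnegative gradient term; since $P$ and the Chern connection involve only transverse derivatives, the computation is identical to \cite[Section~7.4]{lt}. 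Substituting $K^0_h = -\epsilon\log f$ and applying Cauchy--Schwarz gives
\[
\tfrac{1}{2}\,P\big(|\log f|^2\big) + \epsilon|\log f|^2 \le -\big\langle K^0, \log f\big\rangle \le |K^0|\,|\log f|,
\]
which is exactly (i).

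Part (ii) is then immediate from (i) by the maximum principle for basic functions (the same principle used in Theorem~\ref{thm:vanishing}, where an interior maximum of a nonnegative basic function forces $P \ge 0$ there): at a point $x_0 \in X$ where $|\log f|^2$ attains its maximum $m_\epsilon^2$ one has $P(|\log f|^2)(x_0) \ge 0$, so (i) collapses to $\epsilon\, m_\epsilon^2 \le |K^0|(x_0)\, m_\epsilon \le (\max_X|K^0|)\, m_\epsilon$, and dividing by $m_\epsilon$ (the case $m_\epsilon=0$ being trivial) yields $m_\epsilon \le \tfrac{1}{\epsilon}\max_X|K^0|$. For part (iii), the $L^2$-to-$C^0$ estimate, I would discard the nonnegative term $\epsilon|\log f|^2$ in (i) and use AM--GM to rewrite it as a subsolution inequality of the form $P(|\log f|^2) \le c_1\,|\log f|^2 + c_2$ with $c_1$ an absolute constant and $c_2 = O\big((\max_X|K^0|)^2\big)$. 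A De Giorgi--Nash--Moser iteration, run in the transverse variables with the basic Sobolev embedding theorem of Section~\ref{sec:sobolev} in place of the ordinary one, then bounds $\sup_X|\log f|^2$ in terms of $\|\log f\|_{L^2}$ and $\max_X|K^0|$; tracking the dependence on these quantities produces the stated (non-sharp) quadratic bound. Equivalently one may represent $|\log f|^2$ through the Green's operator of $P$, whose existence is furnished by Corollary~\ref{cor:ppstar}.

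Estimates (i) and (ii) are routine once the fundamental inequality is granted: they are formally identical to the non-foliated case, and the only genuinely foliated inputs — transverse ellipticity of $P$, the maximum principle for basic functions, and the local diagonalisation of basic self-adjoint endomorphisms on a dense open set — are already in place. The main obstacle is the elliptic machinery behind (iii). Here one must be careful that for a transverse \emph{Gauduchon} (as opposed to transverse Kähler) metric the operator $P$ is \emph{not} self-adjoint, so the iteration (or Green's-operator) argument must be phrased using only the principal symbol of $P$, which agrees with that of the transverse Laplacian, together with the basic Sobolev inequality, rather than any spectral decomposition. Verifying that the classical Moser iteration localises to the transverse directions and that all constants involved are the basic ones is the step requiring the most care.
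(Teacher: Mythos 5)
Your proposal is correct and follows essentially the same route as the paper: the paper's own ``proof'' of Lemma \ref{lem:334} is just the assertion that the estimates of \cite[Section 3.3]{lt} carry over to the foliated setting, and your argument --- the fundamental pointwise inequality $\tfrac12 P(|\log f|^2)\le\langle K^0_h-K^0,\log f\rangle$ obtained by local diagonalisation in basic unitary frames, the substitution $K^0_h=-\epsilon\log f$ coming from $L_\epsilon(f)=0$, the maximum principle for basic functions for (ii), and a transverse Moser-type sup estimate built on the basic Sobolev theory for (iii) --- is precisely the content of that carry-over. The one point to keep explicit is that, as the paper itself notes in its discussion of \cite[Proposition 3.2.5]{lt}, the diagonalisation is only available in basic unitary frames over a dense open saturated subset rather than globally, which your final paragraph already acknowledges.
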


\begin{proposition}[\cite{lt}, Proposition 3.3.5]\label{prop:335}
Suppose there is a positive real number $m$ such that $m_\epsilon \le m$ for all $\epsilon \in (\epsilon_0 , 1]$. Then for all $p > 1$ and $\epsilon \in (\epsilon_0 , 1]$ there is a constant $C(m)$ depending only on $m$ such that:
\begin{itemize}
\item[(i)]{$|| \varphi_{\epsilon} ||_{p,2} \le C(m) \cdot \left( 1 + ||f_\epsilon ||_{p,2} \right)$.}
\item[(ii)]{$ ||f_\epsilon ||_{p,2} \le e^{C(m)(1-\epsilon)} \cdot \left( 1 + ||f_1 ||_{p,2} \right)$.}
\end{itemize}
\end{proposition}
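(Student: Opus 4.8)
The plan is to follow the proof of \cite[Proposition 3.3.5]{lt}, reducing both estimates to elliptic estimates for the linearised equation together with the a priori $C^0$ bound $m$. First I would differentiate the equation $L_\epsilon(f_\epsilon) = 0$ with respect to $\epsilon$. Since the only explicit $\epsilon$-dependence of $L_\epsilon$ is through the term $\epsilon \log(f)$, this yields the linear transverse elliptic equation
\[
d_2 L_\epsilon(f_\epsilon)(\varphi_\epsilon) = -\log(f_\epsilon),
\]
where $d_2 L_\epsilon(f_\epsilon)$ is exactly the transverse elliptic operator of index zero analysed in Section \ref{sec:jopen} (its leading symbol agrees with that of $P_{End(E)}$, hence with $\partial_{End(E)}^*\partial_{End(E)}$). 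Note that $\varphi_\epsilon$ is basic, being a limit of difference quotients of the $f_\epsilon$, which are basic and smooth by Lemma \ref{lem:regularity1}, so the whole argument stays within the basic category.

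For part (i) I would proceed in two stages. First, an a priori integral estimate: pairing the differentiated equation with $\eta_\epsilon = f_\epsilon^{-1/2}\circ\varphi_\epsilon\circ f_\epsilon^{-1/2}$ and integrating by parts (using the basic Stokes' theorem \eqref{equ:basicstokes} and the fact, established in Section \ref{sec:riemfol}, that divergences of basic vector fields are basic) produces a coercive inequality in which the constants depend on $m$ only, since the pointwise bound $|\log f_\epsilon| \le m$ controls both $f_\epsilon$ and $f_\epsilon^{-1}$ in $C^0$; together with the normalisation $\det(f_\epsilon)=1$ (which forces $\eta_\epsilon$ to be trace-free) this gives a bound on $\|\eta_\epsilon\|_{2,1}$, hence on $\|\varphi_\epsilon\|_{2,1}$, that is uniform in $\epsilon$. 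Second, a bootstrap: rewriting the differentiated equation as $P_{End(E)}(\varphi_\epsilon) = (\text{forcing})$ and applying the basic elliptic regularity of Lemma \ref{lem:trreg} together with the basic Sobolev embedding and multiplication theorems, one upgrades the estimate to $L^{p,2}$. The first- and second-order coefficients involve up to two derivatives of $f_\epsilon$, and it is precisely these that produce the factor $\|f_\epsilon\|_{p,2}$ on the right-hand side, while all remaining contributions are absorbed into $C(m)$; this yields $\|\varphi_\epsilon\|_{p,2}\le C(m)(1+\|f_\epsilon\|_{p,2})$.

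Part (ii) is then a Gr\"onwall argument. Since $\tfrac{d}{d\epsilon}f_\epsilon = \varphi_\epsilon$, the function $y(\epsilon) = \|f_\epsilon\|_{p,2}$ satisfies $|y'(\epsilon)| \le \|\varphi_\epsilon\|_{p,2} \le C(m)(1+y(\epsilon))$ by (i). Integrating this differential inequality from $\epsilon$ up to $1$ gives $1 + y(\epsilon) \le e^{C(m)(1-\epsilon)}(1 + y(1))$, which is exactly the asserted bound $\|f_\epsilon\|_{p,2} \le e^{C(m)(1-\epsilon)}(1 + \|f_1\|_{p,2})$.

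The routine analytic content is identical to the non-foliated case, so the main obstacle is ensuring that every analytic tool is available in the transverse/basic setting. Concretely, the operators here are only transversally elliptic, so classical elliptic regularity must be replaced by the basic elliptic regularity of Lemma \ref{lem:trreg}, and the classical Sobolev embedding and multiplication theorems by their basic counterparts; the integration-by-parts and maximum-principle steps must be carried out with basic forms, relying on basic Stokes and on divergences of basic fields being basic. A secondary but genuine subtlety is the uniformity in $\epsilon$: because the zeroth-order term $\epsilon(\cdots)$ degenerates as $\epsilon \to \epsilon_0$, the $\epsilon$-independence of $C(m)$ must be extracted from the second-order part of the operator rather than from this degenerating term, exactly as in \cite{lt}.
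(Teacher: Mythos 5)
Your proposal takes essentially the same approach as the paper: the paper's entire proof of this proposition is the observation that the estimates of \cite[Section 3.3]{lt} carry over to the foliated setting once the classical tools (Sobolev embedding and multiplication, elliptic regularity, Stokes' theorem, the maximum principle) are replaced by their basic counterparts, which is precisely the structure of your argument. The extra detail you supply --- differentiating $L_\epsilon(f_\epsilon)=0$ in $\epsilon$ to get the transversally elliptic equation $d_2 L_\epsilon(f_\epsilon)(\varphi_\epsilon) = -\log(f_\epsilon)$, estimating $\varphi_\epsilon$ with constants controlled by the $C^0$ bound $m$, and deducing (ii) by Gr\"onwall --- is a faithful sketch of the \cite{lt} proof being transplanted, so there is nothing to object to.
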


From this proposition we deduce the following:
\begin{proposition}[\cite{lt}, Proposition 3.3.6]
\leavevmode
\begin{itemize}
\item[(i)]{$J = (0,1]$.}
\item[(ii)]{If there is a constant $C$ such that $|| {\rm log}(f_\epsilon)||_{L^2} \le C$ for all $\epsilon \in (0,1]$, then there exists a solution $f_0$ of the equation $L_0(f_0) = 0$ and hence there exists a transverse Hermitian-Einstein metric on $E$.}
\end{itemize}
\end{proposition}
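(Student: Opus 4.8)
The final Proposition (adapted from \cite[Prop.\ 3.3.6]{lt}) has two parts: (i) $J = (0,1]$, and (ii) if $\|\log(f_\epsilon)\|_{L^2}$ is uniformly bounded on $(0,1]$, then a solution $f_0$ of $L_0(f_0)=0$ exists, yielding a transverse Hermitian--Einstein metric.

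The plan is to deduce everything from the a priori estimates already assembled in Lemma~\ref{lem:334} and Proposition~\ref{prop:335}, together with the openness/closedness machinery of Sections~\ref{sec:jopen}--\ref{sec:jclosed}. For part (i), I would argue that $J=(\epsilon_0,1]$ is the maximal interval on which the continuity-method solution exists, and show $\epsilon_0=0$. The key point is that on any subinterval $[\epsilon,1]$ with $\epsilon>\epsilon_0>0$, Lemma~\ref{lem:334}(ii) gives the uniform bound $m_{\epsilon'} \le \epsilon_0^{-1}\max_X|K^0|$ for all $\epsilon'\in[\epsilon,1]$, since $\epsilon'\ge\epsilon_0$. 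Thus the hypothesis of Proposition~\ref{prop:335} holds with $m = \epsilon_0^{-1}\max_X|K^0|$, and part (ii) of that proposition yields a uniform $L^{p,2}_B$-bound on $f_\epsilon$ as $\epsilon\to\epsilon_0^+$, while part (i) bounds $\varphi_\epsilon = df_\epsilon/d\epsilon$ in $L^{p,2}_B$. Integrating the bound on $\varphi_\epsilon$ shows the family $\{f_\epsilon\}$ is Cauchy in $L^{p,2}_B$ as $\epsilon\to\epsilon_0^+$, so the limit $f_{\epsilon_0}=\lim_{\epsilon\to\epsilon_0}f_\epsilon$ exists in $L^{p,2}_B$. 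Passing to the limit in the equation $L_\epsilon(f_\epsilon)=0$ (continuity of $\hat L$ in both arguments on the relevant Sobolev spaces) gives $L_{\epsilon_0}(f_{\epsilon_0})=0$, and Lemma~\ref{lem:regularity1} upgrades $f_{\epsilon_0}$ to a smooth basic solution. But then the openness result of Section~\ref{sec:jopen} furnishes solutions on an interval strictly containing $\epsilon_0$, contradicting maximality unless $\epsilon_0=0$. Hence $J=(0,1]$.

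For part (ii), the argument is the same limiting scheme run at $\epsilon_0=0$, but now with the \emph{hypothesised} uniform $L^2$-bound $\|\log(f_\epsilon)\|_{L^2}\le C$ replacing the estimate from Lemma~\ref{lem:334}(ii) (which degenerates as $\epsilon\to0$). I would feed this $L^2$-bound into Lemma~\ref{lem:334}(iii) to obtain a uniform bound on $m_\epsilon$ for all $\epsilon\in(0,1]$, independent of $\epsilon$. With $m_\epsilon\le m$ in hand, Proposition~\ref{prop:335} again applies uniformly on all of $(0,1]$: part~(ii) bounds $\|f_\epsilon\|_{p,2}$ and part~(i) bounds $\|\varphi_\epsilon\|_{p,2}$, so $\{f_\epsilon\}$ is Cauchy in $L^{p,2}_B$ as $\epsilon\to0$ and $f_0=\lim_{\epsilon\to0}f_\epsilon$ exists. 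Taking the limit in $L_\epsilon(f_\epsilon)=0$ and noting $\epsilon\cdot\log(f_\epsilon)\to0$ (the $\log(f_\epsilon)$ are uniformly bounded), we get $L_0(f_0)=0$; by Lemma~\ref{lem:regularity1} the solution is smooth and basic, and by construction $h(s,t)=h_0(f_0 s,t)$ is a transverse Hermitian--Einstein metric.

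\textbf{The main obstacle.} The genuinely delicate step is verifying that the a priori estimates of Lemma~\ref{lem:334} and Proposition~\ref{prop:335} do transfer to the foliated setting, but the excerpt has explicitly asserted that ``the estimates in \cite[Section 3.3]{lt} carry over without difficulty,'' so I may assume them. Given that, the only real work is the limiting/Cauchy argument: establishing that the $L^{p,2}_B$-bound on $\varphi_\epsilon$ genuinely controls the variation of $f_\epsilon$ in a \emph{basic} Sobolev space (so that the limit lands in $L^{p,2}_B$ rather than merely $L^{p,2}$), and that weak-$*$ limits of smooth basic sections remain basic. The first is handled by the fundamental theorem of calculus, $f_{\epsilon'}-f_{\epsilon}=\int_{\epsilon}^{\epsilon'}\varphi_t\,dt$ in $L^{p,2}_B$, using that $L^{p,2}_B$ is a closed subspace (a Banach space); the second follows because $\mathcal C^k_B(E)$ and $L^{p,k}_B(E)$ are closed subspaces of their non-basic counterparts, as recorded in Section~\ref{sec:sobolev}. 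The regularity bootstrapping at the limit is precisely Lemma~\ref{lem:regularity1}, so no new analysis is required. I expect this proof to be short, since all the hard estimates are cited; the argument is purely a matter of assembling them in the correct order and tracking the distinction between convergence in $L^{p,k}$ and in $L^{p,k}_B$.
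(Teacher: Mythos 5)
Your argument is correct and reaches the paper's conclusion, but the key convergence step is carried out by a genuinely different mechanism. You bound $\varphi_\epsilon = df_\epsilon/d\epsilon$ via Proposition \ref{prop:335}(i), combine with the uniform bound of part (ii), and integrate to conclude that $\epsilon \mapsto f_\epsilon$ is Lipschitz into $L^{p,2}_B$, hence strongly Cauchy as $\epsilon \to \epsilon_0^+$ (resp. $\epsilon \to 0$). The paper never uses part (i) here: it uses only the uniform bound $\|f_\epsilon\|_{p,2} \le C(m)$, observes that $L^{p,2}_B(Herm_B(E,h_0))$ is a closed subspace of a reflexive Banach space and hence reflexive, extracts a weakly convergent subsequence by Banach--Alaoglu, and upgrades to strong $L^{p,1}_B$ convergence by basic Sobolev compactness; the passage to the limit in the second-order equation under merely weak $L^{p,2}$ convergence is then the delicate point, which the paper defers to \cite[Proposition 3.3.6]{lt}. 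Your route buys strong $L^{p,2}_B$ convergence of the whole family (no subsequence), so the limit equation follows at once from continuity of $\hat{L}$; the price is that you must invoke differentiability of $\epsilon \mapsto f_\epsilon$ (which the paper does record, via the implicit function theorem) together with estimate (i). Two points you should make explicit to fully close the argument: first, fix $p > 2n$ at the outset, so that $k=2$ suffices for the Sobolev multiplication underlying continuity of $\hat{L}$ and for the regularity argument of Lemma \ref{lem:regularity1}, and so that $L^{p,2}$ embeds in $\mathcal{C}^0$; second, verify that the limit lies in $L^{p,2}_B( Herm^+_B(E,h_0))$, i.e. is positive definite --- otherwise $L_{\epsilon_0}(f_{\epsilon_0})$, which involves $f^{-1}$ and $\log f$, is not even defined and continuity of $\hat{L}$ cannot be applied at the limit. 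This positivity follows, exactly as in the paper, from the uniform bound $m_\epsilon \le m$ you already have: the eigenvalues of $f_\epsilon$ lie in $[e^{-m}, e^m]$, and by the $\mathcal{C}^0$-embedding this bound persists in the limit.
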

\begin{proof}
i) Suppose that $(\epsilon_0 , 1] \subseteq J$ for some $\epsilon_0 > 0$. We show that the solution $f_\epsilon$ on $(\epsilon_0 , 1]$ extends to $[\epsilon_0 , 1]$. This together with the fact that $J$ is open in $(0,1]$ implies $J = (0,1]$. Choose a $p$ with $p > 2n$. Set $m = \frac{1}{\epsilon_0} \cdot {\rm max}_X |K^0|$. From Lemma \ref{lem:334} (ii) we have $m_\epsilon \le m$, and therefore Proposition \ref{prop:335} (ii) implies
\[
|| f_\epsilon ||_{p,2} \le C(m)
\]
uniformly in $\epsilon > \epsilon_0$. Since $L^{p,2}_B( Herm_B(E,h_0)) )$ is a closed subspace of the reflexive Banach space $L^{p,2}( Herm(E,h_0) )$, it follows that $L^{p,2}_B( Herm_B(E,h_0) )$ is itself a reflexive Banach space. The Banach-Alaogu theorem implies that since the $f_\epsilon$ are uniformly bounded, we can find a subsequence $\{ f_{\epsilon_i} \}_{i \in \mathbb{N}}$ which converges weakly in $L^{p,2}$ to some $f_{\epsilon_0} \in L^{p,2}_B( Herm_B(E,h_0) )$. The bound $m_\epsilon \le m$ implies that the eigenvalues of $f_{\epsilon_0}$ lie in the interval $[ e^{-m} , e^m ]$, hence $f_{\epsilon_0} \in L^{p,2}_B( Herm^+_B(E,h_0))$. By Sobolev compactness of $L^{p,2}_B \to L^{p,1}_B$, we may assume that $f_{\epsilon_i}$ converges to $f_{\epsilon_0}$ strongly in $L^{p,1}_B$. It remains to show that $L_{\epsilon_0}(f_{\epsilon_0}) = 0$ since then by an argument similar the the proof of Lemma \ref{lem:regularity1} ensures that $f_{\epsilon_0}$ is smooth (Lemma \ref{lem:regularity1} requires one to take $k$ sufficiently large. Here we have $k=2$, but since we are assuming $p > 2n$, one sees that $k=2$ is already large enough). This can be shown in exactly the same way as done in \cite[Proposition 3.3.6]{lt}. For (ii), if $|| {\rm log}(f_{\epsilon}) ||_{L^2} \le C$ for all $\epsilon \in (0,1]$ then Lemma \ref{lem:334} (iii) implies a uniform bound for $m_\epsilon$ and then by Proposition \ref{prop:335} (ii) a uniform bound on the $|| f_\epsilon ||_{p,2}$ on $(0,1]$. A similar argument to part (i) of the proof now gives convergence of the $f_\epsilon$ to a solution $f_0$.
\end{proof}

\subsection{Construction of a destabilising subsheaf}

We have seen in Section \ref{sec:jclosed} that a uniform bound on $|| {\rm log}(f_\epsilon) ||_{L^2}$ for all $\epsilon \in (0,1]$ implies the existence of a solution $f_0$ to $L_0(f_0) = 0$ and thus the existence of a transverse Hermitian-Einstein metric on $E$. To prove the existence of a solution $f_0$, it suffices to prove the following:
\begin{proposition}[\cite{lt} Proposition 3.4.1]\label{prop:destab}
If $\limsup\limits_{\epsilon \rightarrow0} || {\rm log}(f_\epsilon) ||_{L^2} = \infty$, then $E$ is not stable.
\end{proposition}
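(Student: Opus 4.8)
The plan is to follow the Uhlenbeck-Yau strategy as adapted in \cite[Proposition 3.4.1]{lt}, using the divergence of $||\mathrm{log}(f_\epsilon)||_{L^2}$ to manufacture a weakly holomorphic destabilising subsheaf. First I would normalise the blow-up: assuming $\limsup_{\epsilon\to 0}||\mathrm{log}(f_\epsilon)||_{L^2}=\infty$, pass to a sequence $\epsilon_i\to 0$ with $l_i:=||\mathrm{log}(f_{\epsilon_i})||_{L^2}\to\infty$, and set $u_i = l_i^{-1}\mathrm{log}(f_{\epsilon_i})$, so that each $u_i$ is a basic self-adjoint endomorphism with $||u_i||_{L^2}=1$ and (using $\det f_\epsilon=1$) trace zero. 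The aim is to extract a nonzero weak limit $u_\infty$ whose spectral data cut out a destabilising subsheaf. The key analytic input is a uniform $L^{2,1}_B$-bound on the $u_i$, which comes from re-running the integral estimates of \cite[Section 3.3]{lt} (in particular the inequality in Lemma \ref{lem:334}(i)) after dividing by $l_i$; here the basic Stokes' theorem and the Gauduchon property are what make the boundary and $\partial\overline\partial$-terms vanish, exactly as in Definition \ref{def:degslopebundle}. Basic Sobolev compactness (the $L^{2,1}_B\hookrightarrow L^2_B$ inclusion) then yields a subsequence converging weakly in $L^{2,1}_B$ and strongly in $L^2_B$ to a nonzero limit $u_\infty\in L^{2,1}_B(\mathrm{End}(E))$, self-adjoint and trace-free.

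Next I would analyse the functional-calculus structure of $u_\infty$ following Uhlenbeck-Yau. One shows that for any smooth function $\Phi:\mathbb{R}\to\mathbb{R}$ with $\Phi'\ge 0$ the endomorphism $\Phi(u_\infty)$ still lies in $L^{2,1}_B$ and, crucially, that $\overline\partial_{\mathrm{End}(E)}$ applied to the spectral projections of $u_\infty$ satisfies the vanishing estimates that force these projections to define weakly holomorphic subbundles. Concretely, pick a value $\lambda$ in the spectrum of $u_\infty$ and let $\pi$ be the corresponding spectral projection onto the span of the eigenspaces with eigenvalue $\ge\lambda$ (or $>\lambda$); the limiting inequality guarantees $\pi=\pi^*$, $\pi^2=\pi$ and $(\mathrm{Id}-\pi)\overline\partial_{\mathrm{End}(E)}\pi=0$ in the $L^1_B$ sense, which is precisely the foliated analogue of the weakly holomorphic subbundle condition introduced by Uhlenbeck-Yau. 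Here I would invoke the foliated version of the regularity theorem for weakly holomorphic subbundles, namely that such a $\pi$ is represented by a genuine transverse coherent subsheaf $\mathcal{F}\subset\mathcal{O}(E)$ with torsion-free quotient, defined away from a transverse analytic subvariety of codimension at least $2$; this is where the transverse resolution of singularities of Section \ref{sec:trressing} and the sheaf formalism of Section \ref{sec:stability} enter.

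Finally I would estimate the degree of $\mathcal{F}$. Writing the degree via the curvature formula of Proposition \ref{prop:cw} and the Chern-Weil identity $i\Lambda\,\mathrm{tr}(F_1)=i\Lambda\,\mathrm{tr}(\pi F_E\pi)-|\partial_{\mathrm{End}(E)}\pi|^2$ used in the proof of Theorem \ref{thm:heispolystable}, one bounds $\deg(\mathcal{F})$ from below by a multiple of $\mu(E)$ plus a term controlled by the weak limit, and the blow-up normalisation forces the strict inequality $\mu(\mathcal{F})\ge\mu(E)$, contradicting stability. The main obstacle I anticipate is precisely the regularity step: proving that the weak limit $u_\infty$ and its spectral projections genuinely define a transverse coherent sheaf rather than merely an $L^{2,1}_B$-object. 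In the non-foliated setting this is the technical heart of Uhlenbeck-Yau, and in the foliated setting one must redo it using only basic cutoffs and basic elliptic regularity (Lemma \ref{lem:trreg}), since no basic partition of unity is available; the transverse elliptic estimates and the functional-calculus bounds must all be phrased for transversally elliptic operators as in \cite{elk}. Once the destabilising subsheaf is constructed and its slope estimated, the contradiction with stability of $E$ completes the proof.
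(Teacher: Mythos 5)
Your overall blueprint coincides with the paper's: extract from the blow-up a transverse weakly holomorphic subbundle in the sense of Definition \ref{def:weaklyholo}, convert it into a transverse coherent subsheaf with torsion-free quotient via Theorem \ref{thm:twh}, and use the Chern--Weil formula of Proposition \ref{prop:cw} to show that this subsheaf has slope at least $\mu(E)$. Where you genuinely diverge is in how the projection is constructed. You follow Uhlenbeck--Yau's original normalisation: rescale ${\rm log}(f_{\epsilon_i})$ by $l_i^{-1}$, extract a weak $L^{2,1}_B$ limit $u_\infty \neq 0$, and take spectral projections of $u_\infty$. The paper instead follows L\"ubke--Teleman (Lemma \ref{lem:345} and Proposition \ref{prop:346}): rescale $f_\epsilon$ itself by $\rho(\epsilon) = e^{-M_\epsilon}$, take a weak limit $f_\infty \neq 0$, and set $\pi = Id_E - f^0_\infty$ where $f^0_\infty = \lim_j f_\infty^{\sigma_j}$ with $\sigma_j \to 0$; this $\pi$ is the projection onto the kernel of $f_\infty$ and is a single destabilising object. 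Your route carries two extra obligations that you pass over: first, one must prove that the eigenvalues of $u_\infty$ are constant almost everywhere, since otherwise ``the spectral projection onto eigenvalues $\ge \lambda$'' need not be expressible as a fixed smooth function applied to $u_\infty$ and need not lie in $L^{2,1}_B$ at all; second, the concluding slope estimate in the Uhlenbeck--Yau route is a summed inequality over all the spectral projections, from which one deduces only that at least one of them violates stability, not that a preassigned one does.

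The genuine gap is the source of your key analytic estimate. The uniform $L^{2,1}_B$ bound on the $u_i$ does not follow from Lemma \ref{lem:334}(i) divided by $l_i$: that inequality carries no usable first-derivative information. Its role in \cite{lt} and in the paper is to produce the sup-norm bounds (ii) and (iii) of Lemma \ref{lem:334} via the maximum principle; integrating it against $\omega^n \wedge \chi$ annihilates the $P$-term entirely (basic Stokes plus the Gauduchon condition give $\int_X P(g)\, \omega^n \wedge \chi = 0$ for any basic function $g$), leaving only the zeroth-order estimate $\epsilon\, ||{\rm log}(f_\epsilon)||_{L^2} \le C$. The derivative bound is the technical heart of the whole argument and comes from a different identity: pairing the equation $L_\epsilon(f_\epsilon) = 0$ with ${\rm log}(f_\epsilon)$ and integrating by parts (basic Stokes again) produces a term in which $\overline{\partial}_{End(E)}\, {\rm log}(f_\epsilon)$ appears weighted by $\Psi(\lambda_1 , \lambda_2) = (e^{\lambda_2 - \lambda_1} - 1)/(\lambda_2 - \lambda_1)$ evaluated on pairs of eigenvalues; this is \cite[Lemma 3.4.3]{lt}, whose foliated analogue must itself be established. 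Because $\Psi$ degenerates when eigenvalue differences are large and negative, one must also invoke the uniform $L^\infty$ bound on the normalised sections coming from Lemma \ref{lem:334}(iii) and compare $l_i \Psi(l_i \cdot , l_i \cdot)$ against bounded test functions before any bound on $||\overline{\partial}_{End(E)} u_i ||_{L^2}$ emerges. As written, your proof is therefore missing its central estimate; the estimate is true, and it is exactly what the paper imports (in its $f_\infty$-form) through Proposition \ref{prop:346}.
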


As in the proof of Uhlenbeck-Yau this is done by constructing from $\{ f_\epsilon \}$ a transverse coherent subsheaf $\mathcal{F}$ of $\mathcal{O}(E)$ which violates stability, that is, $\mu(\mathcal{F}) \ge \mu(E)$. The destabilising subsheaf is constructed using the notion of weakly holomorphic subbundle as in \cite{uy}. Here we introduce the corresponding notion in the foliated setting:

\begin{definition}\label{def:weaklyholo}
Let $E$ be a transverse holomorphic vector bundle which admits a transverse Hermitian metric. A {\em transverse weakly holomorphic subbundle} of $E$ is an element $\pi \in L^{2,1}_B( End(E) )$ such that the following identities hold in $L^1_B( End(E) )$:
\[
\pi^* = \pi = \pi^2, \quad \quad (id_E - \pi) \circ \overline{\partial}( \pi ) = 0.
\]
\end{definition}
Observe that if $F \subseteq E$ is a transverse holomorphic subbundle of $E$, then the orthogonal projection $\pi : E \to E$  to $F$ is a transverse weakly holomorphic subbundle of $E$. Hence Definition \ref{def:weaklyholo} generalises the notion of a transverse holomorphic subbundle. Conversely, if $\pi$ is a transverse weakly holomorphic subbundle such that $\pi$ is $\mathcal{C}^\infty$, then the image of $\pi$ defines a transverse holomorphic subbundle of $E$.\\

In \cite{uy, lt}, it is shown that a weakly holomorphic subbundle of $E$ can be represented by a coherent subsheaf of $E$. We explain below how this result can be extended to the foliated setting.
\begin{theorem}\label{thm:twh}
Let $\pi$ be a transverse weakly holomorphic subbundle of $E$. Then there is a transverse coherent sheaf $\mathcal{F}$ and an transverse analytic subset $S \subset X$ such that:
\begin{itemize}
\item[(i)]{The complex codimension of $S$ in $X$ is at least $2$.}
\item[(ii)]{The restriction of $\pi$ to $X \setminus S$ is smooth and therefore defines a transverse holomorphic subbundle $F \subseteq E|_{X \setminus S}$.}
\item[(iii)]{The restriction of $\mathcal{F}$ to $X \setminus S$ is the sheaf of basic holomorphic sections of $F$.}
\end{itemize}
\end{theorem}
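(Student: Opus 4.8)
**The plan is to localise the problem to foliated coordinate charts, apply the classical Uhlenbeck--Yau regularity theorem for weakly holomorphic subbundles in each chart, and then check that the resulting coherent sheaves and the singular set glue coherently across chart overlaps.** The key observation, emphasised already in the discussion following Definition~\ref{def:tcs}, is that in a local foliated chart $U = V \times W$ a transverse coherent sheaf is literally the same object as a coherent sheaf on the transverse base $V$, and a transverse weakly holomorphic subbundle restricts to an ordinary weakly holomorphic subbundle in the sense of Uhlenbeck--Yau on $V$. So the entire analytic content of the theorem should follow from the non-foliated result once the local-to-global bookkeeping is handled.

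First I would cover $X$ by foliated charts $U_\alpha = V_\alpha \times W_\alpha$ over which $E$ is trivialised as a foliated bundle, with basic holomorphic transition functions $g_{\alpha\beta}$ depending only on the transverse variable. Since $\pi \in L^{2,1}_B(\mathrm{End}(E))$ is basic, in each chart it is the pullback under $V_\alpha \times W_\alpha \to V_\alpha$ of an $L^{2,1}$ endomorphism-valued function $\pi_\alpha$ on $V_\alpha$ satisfying $\pi_\alpha^* = \pi_\alpha = \pi_\alpha^2$ and $(\mathrm{id} - \pi_\alpha)\overline{\partial}\pi_\alpha = 0$ in the ordinary sense on $V_\alpha$. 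The theorem of Uhlenbeck--Yau (see \cite{uy} and \cite[Section~7.4]{lt}) then produces, for each $\alpha$, an analytic subset $S_\alpha \subseteq V_\alpha$ of complex codimension at least $2$ and a coherent subsheaf $\mathcal{F}_\alpha$ of $\mathcal{O}(E)|_{V_\alpha}$ such that $\pi_\alpha$ is smooth on $V_\alpha \setminus S_\alpha$, defines a holomorphic subbundle there, and $\mathcal{F}_\alpha$ is the sheaf of holomorphic sections of that subbundle. Pulling back along the projections, I set $S$ to be the transverse analytic subvariety of $X$ locally given by $S_\alpha$, and $\mathcal{F}$ the transverse coherent sheaf locally given by $\mathcal{F}_\alpha$; the codimension of $S$ is then at least $2$ by definition of the codimension of a transverse analytic subset, which gives~(i), while~(ii) and~(iii) hold chart-by-chart by construction.

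\emph{The main obstacle is the gluing step.} I must verify that the locally defined data $\{(S_\alpha, \mathcal{F}_\alpha)\}$ agree on overlaps $U_{\alpha\beta}$ so that $S$ and $\mathcal{F}$ are globally well defined. On an overlap the change of transverse coordinate is $f_{\alpha\beta}\colon V_{\beta\alpha} \to V_{\alpha\beta}$, which is holomorphic, and the bundle transition is $g_{\alpha\beta}$, a basic holomorphic $GL(r,\mathbb{C})$-valued function; the two local representatives $\pi_\alpha$ and $\pi_\beta$ are related by conjugation by $g_{\alpha\beta}$ and reparametrisation by $f_{\alpha\beta}$. Because the Uhlenbeck--Yau construction is canonical --- the singular set is intrinsically the set where the weak subbundle fails to be smooth, and the sheaf is the reflexive hull / the pushforward of the holomorphic sections away from $S_\alpha$ --- it is compatible with biholomorphisms and with holomorphic gauge transformations. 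Hence $f_{\alpha\beta}(S_\beta \cap V_{\beta\alpha}) = S_\alpha \cap V_{\alpha\beta}$ and the sheaves match after transport, so the $\mathcal{F}_\alpha$ patch via the $g_{\alpha\beta}$ to a global transverse coherent subsheaf $\mathcal{F} \subset \mathcal{O}(E)$. The remaining point is purely formal: one checks that the resulting transition data for $\mathcal{F}$ and for $S$ satisfy the cocycle condition on triple overlaps, which follows from the cocycle conditions already satisfied by the $f_{\alpha\beta}$ and $g_{\alpha\beta}$, exactly as in the gluing argument used in the proof of the transverse resolution of singularities in Section~\ref{sec:trressing}. This completes the construction and establishes (i)--(iii).
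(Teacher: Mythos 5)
Your proposal is correct and follows essentially the same route as the paper: the paper likewise reduces to the classical Uhlenbeck--Yau theory in foliated charts, where the basic $L^{2,1}_B$ projection $\pi$ becomes a weakly holomorphic map $V_\alpha \to Gr(s,r)$ into the Grassmannian, which is meromorphic by \cite[Theorem 6.1]{uy}, hence holomorphic off a codimension-$2$ transverse analytic set and determines the transverse coherent sheaf. The only cosmetic differences are that the paper first records that $tr(\pi)$ is an almost-everywhere integer lying in $L^{2,1}_B$ and hence a constant $s$ (fixing the rank and the target Grassmannian), and it treats the chart-to-chart gluing implicitly by citing the corresponding local-to-global step in \cite[S290]{uy}, whereas you make the gluing and its canonicity explicit.
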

\begin{proof}
We adapt the proof given in \cite[S290-S292]{uy} to the the foliated setting. First note that since $\pi = \pi^* = \pi^2$ in $L^{2,1}_B$, then $\pi$ is represented almost everywhere by an orthogonal projection, so the function $tr(\pi)$ is almost everywhere an integer. Since $tr(\pi) \in L^{2,1}_B$ this integer must be a constant, hence $tr(\pi) = s$ for some $s \in \{ 0 , 1 , \dots , r\}$. Therefore, in every foliated coordinate chart $X \supseteq U = V \times W$ over which $E$ is trivialised, $\pi$ determines an $L^{2,1}$ function $\pi|_U : V \to Gr( s , r )$ taking values in the Grassmannian $Gr(s , r)$ of $s$-dimensional subspaces of $\mathbb{C}^r$. The condition $(id_E - \pi) \circ \overline{\partial}( \pi) = 0$ in $L^1_B$ means that the local map $\pi|_U : V \to Gr(s,r)$ is weakly holomorphic in the sense of \cite[S284]{uy}. We then use the fact that a weakly holomorphic map into a projective algebraic manifold (in this case, $Gr(s,r)$) is meromorphic \cite[Theorem 6.1]{uy}. As explained in \cite[S290]{uy}, the collection of locally defined meromorphic maps $\pi|_U : V \to Gr(s,r)$ determined by local holomorphic trivialisations of $E$ defines a coherent sheaf $\mathcal{F}$ (or in our case a transverse coherent sheaf) such that around points where the map $\pi|_U$ is holomorphic, $\mathcal{F}$ is the sheaf of basic holomorphic sections of the pullback under $\pi|_U$ of the tautological bundle on $Gr(s,r)$. Moreover, as $Gr(s,r)$ is projective algebraic, we may assume that the meromorphic maps $\pi|_U$ are holomorphic outside a transverse analytic subset $S \subset X$ of complex codimension at least $2$.
\end{proof}

\begin{remark}
Note that transverse coherent sheaf $\mathcal{F}$ constructed in Theorem \ref{thm:twh} has the property that $\mathcal{O}(E)/\mathcal{F}$ is torsion free (because $\mathcal{O}(E)/\mathcal{F}$ is locally free on the complement of $S$, which has codimension at least $2$).
\end{remark}

To prove Proposition \ref{prop:destab}, we now suppose that $f_\epsilon$ is such that $\limsup\limits_{\epsilon \rightarrow0} || {\rm log}(f_\epsilon) ||_{L^2} = \infty$. From this we will construct a transverse weakly holomorphic subbundle such that the resulting transverse coherent sheaf violates stability. Following \cite[Section 3.4]{lt}, for $\epsilon >0$ and $x \in X$, let $\lambda(\epsilon , x)$ be the largest eigenvalue of $log( f_\epsilon(x))$ and define
\[
M_\epsilon = {\rm max}_{x \in X} \lambda(\epsilon , x), \quad \rho(\epsilon) = e^{-M_\epsilon}.
\]

The following lemma is proved in the same way as in the non-foliated setting \cite{lt}:
\begin{lemma}[\cite{lt}, Lemma 3.4.5]\label{lem:345}
\leavevmode
\begin{itemize}
\item[(i)]{For every $x \in X$, every eigenvalue of $\rho(\epsilon)f_\epsilon(x)$ satisfies $\lambda \le 1$.}
\item[(ii)]{For every $x \in X$, there is an eigenvalue of $\rho(\epsilon)f_\epsilon(x)$ such that $\lambda \le \rho(\epsilon)$.}
\item[(iii)]{We have ${\rm max}_X ( \rho(\epsilon)|f_\epsilon|) \ge 1$.}
\item[(iv)]{For a subsequence $\epsilon_i \to 0$, we have $\rho(\epsilon_i) \to 0$.}
\end{itemize}
\end{lemma}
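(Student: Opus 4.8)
The plan is to reduce everything to pointwise linear algebra on the fibres of $E$ and then, for the single statement that has real content, to integrate a pointwise estimate over the compact manifold $X$. At a point $x\in X$, write $\mu_1(\epsilon,x)\ge\cdots\ge\mu_r(\epsilon,x)$ for the eigenvalues of $\log(f_\epsilon(x))$, so that $\lambda(\epsilon,x)=\mu_1(\epsilon,x)$, the eigenvalues of $f_\epsilon(x)$ are $e^{\mu_j(\epsilon,x)}$, and those of $\rho(\epsilon)f_\epsilon(x)=e^{-M_\epsilon}f_\epsilon(x)$ are $e^{\mu_j(\epsilon,x)-M_\epsilon}$. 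The normalisation $\det(f_\epsilon)=1$ established in Section \ref{sec:jclosed} gives the trace-free condition $\sum_{j}\mu_j(\epsilon,x)=0$ at every point, which is the only global input needed for the first three parts.

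For (i) I would simply note that $\mu_j(\epsilon,x)\le\lambda(\epsilon,x)\le M_\epsilon$, so every eigenvalue $e^{\mu_j(\epsilon,x)-M_\epsilon}\le1$. For (ii), the trace-free condition forces the smallest eigenvalue $\mu_r(\epsilon,x)\le0$, whence the corresponding eigenvalue $e^{\mu_r(\epsilon,x)-M_\epsilon}\le e^{-M_\epsilon}=\rho(\epsilon)$. For (iii), I would choose a point $x_0$ realising $\lambda(\epsilon,x_0)=M_\epsilon$; then $e^{M_\epsilon}$ is the top eigenvalue of the positive self-adjoint endomorphism $f_\epsilon(x_0)$, so the fibre norm satisfies $|f_\epsilon(x_0)|\ge e^{M_\epsilon}$ and therefore $\rho(\epsilon)|f_\epsilon(x_0)|\ge1$, giving the claimed lower bound for the maximum over $X$.

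The only step I expect to require genuine work is (iv), whose content is to turn the hypothesis $\limsup_{\epsilon\to0}\|\log(f_\epsilon)\|_{L^2}=\infty$ into $\rho(\epsilon_i)\to0$ along a subsequence, i.e.\ $M_{\epsilon_i}\to\infty$. The key is a pointwise comparison of the entire log-spectrum with its top eigenvalue, again driven by $\sum_j\mu_j=0$: splitting the eigenvalues by sign gives $\sum_{\mu_j<0}|\mu_j|=\sum_{\mu_j\ge0}\mu_j\le r\,\lambda(\epsilon,x)$, so $|\mu_j(\epsilon,x)|\le r\,\lambda(\epsilon,x)$ for every $j$ and hence $|\log(f_\epsilon(x))|\le r^{3/2}\lambda(\epsilon,x)\le r^{3/2}M_\epsilon$. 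Integrating this over $X$ yields $\|\log(f_\epsilon)\|_{L^2}\le r^{3/2}\sqrt{Vol(X)}\,M_\epsilon$. By hypothesis there is a sequence $\epsilon_i\to0$ with $\|\log(f_{\epsilon_i})\|_{L^2}\to\infty$, and this inequality then forces $M_{\epsilon_i}\to\infty$, so $\rho(\epsilon_i)=e^{-M_{\epsilon_i}}\to0$.

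Finally I would note that the foliated setting contributes nothing new here: every estimate is carried out fibrewise on $E_x$ and is then integrated against the volume form of the compact manifold $X$, exactly as in \cite[Lemma 3.4.5]{lt}. The only points to verify are that $\lambda(\epsilon,\cdot)$ and $|\log(f_\epsilon)|$ are continuous functions on $X$, so that the maxima defining $M_\epsilon$ and the integral used in (iv) are legitimate; this follows from the continuity of $f_\epsilon$ together with the continuous dependence of eigenvalues on a self-adjoint endomorphism.
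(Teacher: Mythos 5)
Your proof is correct and takes essentially the same route as the paper, which simply defers to L\"ubke--Teleman \cite[Lemma 3.4.5]{lt}: parts (i)--(iii) are pointwise spectral estimates driven by the normalisation $\det(f_\epsilon)=1$ (i.e.\ $tr\,\log(f_\epsilon)=0$) from Section \ref{sec:jclosed}, and part (iv) follows from the comparison $\|\log(f_\epsilon)\|_{L^2}\le r^{3/2}\sqrt{Vol(X)}\,M_\epsilon$ together with the standing hypothesis $\limsup_{\epsilon\to 0}\|\log(f_\epsilon)\|_{L^2}=\infty$ of Proposition \ref{prop:destab}. No gaps: your eigenvalue bound $|\mu_j|\le r\,\lambda(\epsilon,x)$ and the continuity remarks are exactly what is needed for the argument to carry over verbatim to the foliated setting.
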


Using this lemma, we are at last able to construct the desired transverse weakly holomorphic subbundle. Again, the proof is essentially the same as in \cite{lt}:
\begin{proposition}[\cite{lt}, Proposition 3.4.6]\label{prop:346}
There is a subsequence $\epsilon_i \to 0$ such that $\rho(\epsilon_i) \to 0$ and such that the sequence $f_i = \rho(\epsilon_i)f_{\epsilon_i}$ satisfies:
\begin{itemize}
\item[(i)]{As $i \to \infty$, the $f_i$ converge weakly in $L^{2,1}_B$ to some $f_\infty \neq 0$.}
\item[(ii)]{There is a sequence $\sigma_j$ with $0 < \sigma_j \le 1$, $\sigma_j \to 0$, such that $f_\infty^{\sigma_j}$ converges weakly in $L^{2,1}_B$ to some $f_\infty^0$.}
\item[(iii)]{We have that $\pi = (Id_E - f^0_\infty)$ is a weakly holomorphic subbundle of $E$.}
\end{itemize}
\end{proposition}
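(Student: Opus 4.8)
The plan is to follow the construction of Uhlenbeck--Yau as presented in \cite[Proposition 3.4.6]{lt}, checking that every analytic input used there has already been supplied in a basic version. Throughout we work with the normalised sequence $f_i = \rho(\epsilon_i) f_{\epsilon_i}$, where $\epsilon_i \to 0$ is the subsequence furnished by Lemma \ref{lem:345}(iv), so that $\rho(\epsilon_i) \to 0$. By Lemma \ref{lem:345}(i) each $f_i$ is positive self-adjoint with eigenvalues in $(0,1]$, which immediately gives a uniform bound $\|f_i\|_{L^\infty} \le C$.

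For part (i) the key point is a uniform bound $\|f_i\|_{2,1} \le C$ in the basic Sobolev space $L^{2,1}_B(\mathrm{End}(E))$. I would obtain this by pairing the equation $L_{\epsilon_i}(f_{\epsilon_i}) = 0$ with $f_i$ and integrating by parts, exactly as in \cite{lt}, the only modification being that every integration by parts is carried out via the basic Stokes' theorem \eqref{equ:basicstokes}, which is available because the foliation is taut; the sign of the $\epsilon \log f$ term together with the uniform $L^\infty$ bound and the boundedness of $\max_X|K^0|$ then controls $\|\bar\partial_{\mathrm{End}(E)} f_i\|_{L^2}$, and self-adjointness of $f_i$ gives the same bound on $\|\partial_{\mathrm{End}(E)} f_i\|_{L^2}$. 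Since $L^{2,1}_B$ is a closed subspace of the reflexive space $L^{2,1}$ it is itself reflexive, so by Banach--Alaoglu a subsequence converges weakly in $L^{2,1}_B$ to some $f_\infty$; by the basic Sobolev compactness theorem of Section \ref{sec:sobolev} the convergence is strong in $L^2_B$. The nonvanishing $f_\infty \neq 0$ then follows from Lemma \ref{lem:345}(iii) together with this strong $L^2_B$ convergence, exactly as in \cite[Proposition 3.4.6]{lt}. I emphasise that the weak compactness used here is of the soft reflexive/Rellich type, and is unrelated to the Uhlenbeck gauge-fixing compactness which, as shown in Section \ref{sec:appl}, fails in the foliated setting.

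For part (ii), $f_\infty$ is positive semidefinite, self-adjoint, with eigenvalues in $[0,1]$ and lies in $L^{2,1}_B$, so its powers $f_\infty^{\sigma}$ for $\sigma \in (0,1]$ are defined pointwise by the functional calculus. The main analytic ingredient is a bound $\|f_\infty^{\sigma}\|_{2,1} \le C$ \emph{uniform in $\sigma$}; this rests on the pointwise algebraic inequality controlling $\bar\partial f_\infty^{\sigma}$ by $\bar\partial f_\infty$, obtained from the integral representation of fractional powers of a positive self-adjoint endomorphism, which is a purely fibrewise statement and hence insensitive to the foliated structure. Given this uniform bound, another application of Banach--Alaoglu in $L^{2,1}_B$ extracts a sequence $\sigma_j \to 0$ with $f_\infty^{\sigma_j} \rightharpoonup f_\infty^0$ weakly; since $\lambda^{\sigma} \to 1$ for $\lambda > 0$ and $\lambda^\sigma \to 0$ for $\lambda = 0$ as $\sigma \to 0$, the limit $f_\infty^0$ is almost everywhere the spectral projection onto the nonzero eigenspaces of $f_\infty$, in particular $f_\infty^0$ is a projection a.e.

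Finally, for part (iii) one sets $\pi = Id_E - f_\infty^0$. The algebraic identities $\pi^* = \pi = \pi^2$ hold a.e.\ because $f_\infty^0$ is a.e.\ a projection, and $\pi \in L^{2,1}_B(\mathrm{End}(E))$ by construction. The genuinely delicate point --- and the step I expect to be the main obstacle --- is verifying the weakly holomorphic condition $(Id_E - \pi)\circ \bar\partial(\pi) = 0$ in $L^1_B(\mathrm{End}(E))$, equivalently $f_\infty^0 \circ \bar\partial f_\infty^0 = 0$. This does not pass to the limit by soft weak-convergence arguments alone; it requires the fundamental inequality of Uhlenbeck--Yau relating $\int_X |(Id_E - f_i^{\sigma})\circ\bar\partial f_i^{\sigma}|^2$ to quantities controlled uniformly by the equation, combined with the lower semicontinuity of these $L^2$ functionals under weak $L^{2,1}_B$ convergence. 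Reproducing this chain of estimates in the basic setting is where the real work lies: each integration by parts must be replaced by its basic counterpart through \eqref{equ:basicstokes} and tautness, and each weak limit and compact embedding must be taken in the basic Sobolev spaces of Section \ref{sec:sobolev}. Once the identity is established, $\pi$ is a transverse weakly holomorphic subbundle in the sense of Definition \ref{def:weaklyholo}, and Theorem \ref{thm:twh} then represents it by a transverse coherent subsheaf, completing the construction required for Proposition \ref{prop:destab}.
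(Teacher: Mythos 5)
Your proposal is correct and takes essentially the same approach as the paper, whose entire proof is the remark that the argument of \cite[Proposition 3.4.6]{lt} carries over to the foliated setting; the adaptations you single out (basic Stokes' theorem via tautness, reflexivity of $L^{2,1}_B$ as a closed subspace of $L^{2,1}$ plus Banach--Alaoglu, basic Sobolev compactness for strong $L^2_B$ convergence, and the Uhlenbeck--Yau estimates for the weak holomorphicity of $\pi$) are exactly the ingredients the paper's appeal to \cite{lt} implicitly requires. No gap to report.
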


Let $\pi$ be the transverse weakly holomorphic subbundle produced by this proposition. Let $\mathcal{F}$ be the corresponding transverse coherent sheaf given in Theorem \ref{thm:twh}. Then as in \cite[Corollary 3.4.7]{lt}, we have that $\mathcal{F}$ is a proper subsheaf of $\mathcal{O}(E)$, that is
\[
0 < {\rm rk}(\mathcal{F}) < r = {\rm rk}(E).
\]
To complete the proof, it remains only to show that $\mu(\mathcal{F}) \ge \mu(E)$, contradicting stability of $E$. The key result in showing this is Proposition \ref{prop:cw}, which says in the foliated setting that the degree of a transverse coherent subsheaf can be calculated using the Chern-Weil formula. In more detail, let $X,S$ and $\mathcal{F}$ be as in Theorem \ref{thm:twh} and let $s = {\rm rk}(\mathcal{F})$. On $X \setminus S$, $\mathcal{F}$ is the sheaf of basic holomorphic sections of a transverse holomorphic subbundle $F \subseteq E|_{X \setminus S}$. Let $h_1$ be the transverse Hermitian metric on $F$ obtained by restriction to $F$ of the transverse Hermitian metric on $E$. Let $F_1$ denote the curvature $2$-form of the Chern connection on $F$ associated to $h_1$. Then Proposition \ref{prop:cw} says that $F_1$ is in $L^1_B$ on $X \setminus S$ and
\begin{equation}\label{equ:cw1}
{\rm deg}(\mathcal{F}) = \frac{i}{2\pi} \int_{X \setminus S} tr( F_1 ) \wedge \omega^{n-1} \wedge \chi.
\end{equation}
With Equation (\ref{equ:cw1}) at hand, the rest of the proof that $\mu(\mathcal{F}) \ge \mu(E)$ is essentially the same as \cite[Pages 88-90]{lt}. This completes the proof of the transverse Hitchin-Kobayashi correspondence.

\section{Applications}\label{sec:appl}

\subsection{A transverse Narasimhan-Seshadri theorem}\label{sec:trns}

In this section, we consider the special features of the transverse Hitchin-Kobayashi correspondence in the special case where $n=1$, that is, the case that the foliation has complex codimension $1$. We obtain a foliated analogue of the Narasimhan-Seshadri theorem \cite{ns}. We also prove existence and uniqueness of the analogue of the Harder-Narasimhan filtration for transverse holomorphic bundles.\\

Throughout this section we let $X$ be a compact oriented, taut, transverse Hermitian foliated manifold of complex codimension $n=1$ and let $g$ be the transverse Hermitian metric on $X$. Since $n=1$ the metric $g$ is not only transverse Gauduchon, but in fact transverse K\"ahler meaning that the transverse Hermitian form $\omega$ satisfies $d\omega = 0$. 

\begin{remark}\label{rem:n=1}
We note here some special features of the $n=1$ case:
\begin{itemize}
\item[(i)]{Let $E$ be a transverse complex vector bundle which admits a transverse Hermitian metric and a transverse unitary connection $\nabla$ with curvature $F_\nabla$. Then $F_\nabla$ is automatically of type $(1,1)$, so $\overline{\partial}_E = \nabla^{0,1}$ is an integrable $\overline{\partial}$-connection and $E$ inherits a transverse holomorphic structure.}
\item[(ii)]{Let $E$ be a transverse holomorphic vector bundle equipped with transverse Hermitian metric and let $\nabla$ be the Chern connection with curvature $F_\nabla$. The Hermitian-Einstein equation $i \Lambda F_\nabla = \gamma id_E$ is equivalent to $F_\nabla = -i\gamma \omega \otimes Id_E$, or by Proposition \ref{prop:einsteinfactor} to:
\[
F_\nabla = -2 \pi i \frac{\mu(E)}{Vol(X)} \omega \otimes Id_E,
\]
where $Vol(X) = \int_X \omega \wedge \chi$. In particular, this says that the connection $\nabla$ is {\em projectively flat}. If the degree of $E$ is zero, the Hermitian-Einstein equations corresponds to $\nabla$ being a flat connection.}
\item[(iii)]{Let $\mathcal{F}$ be a torsion free transverse coherent sheaf. Then $\mathcal{F} = \mathcal{O}(F)$ is the sheaf of basic holomorphic sections of a transverse holomorphic vector bundle $F$. This is because a torsion free coherent sheaf is locally free outside a transverse analytic subset $S \subset X$ of complex codimension $2$. But if $X$ has complex codimension $1$, then $S$ must be empty.}
\item[(iv)]{In particular, if $E$ is a transverse holomorphic vector bundle and $\mathcal{F} \subseteq \mathcal{O}(E)$ is a transverse coherent subsheaf with torsion free quotient, then $\mathcal{F} = \mathcal{O}(F)$, for a transverse holomorphic subbundle $F \subset E$.} 
\item[(v)]{Let $E$ be a transverse holomorphic vector bundle which admits a transverse Hermitian metric. By (iv), we have that stability (resp. semistability) of $E$ is equivalent to: for every proper, non-trivial transverse holomorphic subbundle $F \subset E$, we have
\[
\mu(F) < \mu(E) \quad ({\rm resp.} \; \; \mu(F) \le \mu(E) ).
\]
Note that $\deg(F)$ is well-defined because we can equip $F$ with the induced transverse Hermitian metric. As usual $E$ is polystable if it is the direct sum of stable bundles of the same slope.}
\end{itemize}
\end{remark}

From Remark \ref{rem:n=1} (ii), the Hitchin-Kobayashi correspondence in the case $n=1$ becomes:

\begin{theorem}[Transverse Narasimhan-Seshadri theorem]\label{thm:transns}
Let $X$ be a compact oriented, taut, transverse Hermitian foliated of complex codimension $n=1$. Let $E$ be a transverse holomorphic vector bundle which admits transverse Hermitian metrics. Then $E$ admits a transverse Hermitian metric such that the Chern connection $\nabla$ satisfies
\[
F_\nabla = - 2\pi i \frac{\mu(E)}{Vol(X)} \omega \otimes Id_E,
\]
if and only if $E$ is polystable.
\end{theorem}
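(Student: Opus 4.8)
The plan is to deduce this statement directly from the main transverse Hitchin-Kobayashi correspondence, Theorem~\ref{thm:trhk1}, together with the elementary reduction recorded in Remark~\ref{rem:n=1}~(ii). Theorem~\ref{thm:trhk1} already asserts that a transverse holomorphic vector bundle $E$ admitting transverse Hermitian metrics carries a transverse Hermitian-Einstein metric if and only if $E$ is polystable, so all of the analytic content is in place; what remains is to rewrite the Hermitian-Einstein condition in the explicit curvature form stated here in the special case $n=1$.

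First I would verify the pointwise equivalence of the two forms of the Einstein equation when the transverse complex codimension is $n=1$. Since $H$ then has complex rank one, the bundle of basic $(1,1)$-forms $\wedge^{1,1}H^*$ is of rank one and spanned by $\omega$; hence a curvature of type $(1,1)$ necessarily has the form $F_\nabla = \phi\,\omega$ for some $\phi \in \Omega^0_B(End(E))$. Applying $i\Lambda$ and using $\Lambda\omega = 1$ for $n=1$ gives $i\Lambda F_\nabla = i\phi$, so the Hermitian-Einstein equation $i\Lambda F_\nabla = \gamma_E\, id_E$ of Definition~\ref{def:he} is equivalent to $F_\nabla = -i\gamma_E\,\omega\otimes Id_E$. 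Finally, evaluating the Einstein factor with Proposition~\ref{prop:einsteinfactor} at $n=1$, where $(n-1)! = 1$ and $Vol(X) = \int_X \omega\wedge\chi$, yields $\gamma_E = \tfrac{2\pi}{Vol(X)}\mu(E)$ and hence the displayed identity $F_\nabla = -2\pi i\,\tfrac{\mu(E)}{Vol(X)}\,\omega\otimes Id_E$.

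With this equivalence in hand both implications are immediate. If $E$ is polystable, then Theorem~\ref{thm:trhk1} supplies a transverse Hermitian-Einstein metric whose Chern connection, by the computation above, satisfies the stated curvature equation. Conversely, a transverse Hermitian metric whose Chern connection satisfies that curvature equation is by definition Hermitian-Einstein, so Theorem~\ref{thm:trhk1} forces $E$ to be polystable.

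I expect no genuine obstacle here, since the hard analysis has already been carried out in the proof of Theorem~\ref{thm:trhk1}; the only points to be careful about are bookkeeping ones. In particular one should note that in codimension $n=1$ the integrability of $\overline{\partial}_E$ is automatic (Remark~\ref{rem:n=1}~(i)), so that every transverse unitary connection of the stated form genuinely defines a transverse holomorphic structure, and that stability may be tested on transverse holomorphic subbundles rather than arbitrary coherent subsheaves (Remark~\ref{rem:n=1}~(iv),(v)); these simplifications are what make the statement a clean specialization rather than requiring any of the resolution-of-singularities machinery of Section~\ref{sec:trressing}.
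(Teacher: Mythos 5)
Your proposal is correct and follows exactly the paper's own route: the paper derives this theorem as an immediate specialization of Theorem~\ref{thm:trhk1}, using Remark~\ref{rem:n=1}~(ii) (i.e.\ the observation that in complex codimension one a $(1,1)$-curvature is a multiple of $\omega$, together with Proposition~\ref{prop:einsteinfactor} evaluating $\gamma_E = 2\pi\mu(E)/Vol(X)$) to rewrite the Hermitian-Einstein condition in the displayed form. Your additional bookkeeping remarks about automatic integrability and testing stability on subbundles match Remark~\ref{rem:n=1}~(i),(iv),(v) as well.
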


\begin{remark}
It is tempting to try and give a more direct proof of Theorem \ref{thm:transns} by adapting Donaldson's proof of the Narasimhan-Seshadri theorem \cite{don} to the foliated setting. However that proof uses Uhlenbeck compactness which fails in the foliated setting (see Example \ref{ex:uhlenbeckcomp} below), so the proof can not be easily adapted.
\end{remark}

Of particular interest is the case of transverse holomorphic bundles of degree $0$. By the transverse Narasimhan-Seshadri theorem, if $E$ is a rank $m$ degree $0$ polystable transverse holomorphic vector bundle admitting transverse Hermitian metrics, then $E$ admits a transverse Hermitian metric such that the Chern connection $\nabla$ is flat. Since $\nabla$ is a flat connection it is given by a unitary representation of the fundamental group $\rho : \pi_1(X) \to U(m)$. Conversely, given such a representation $\rho : \pi_1(X) \to U(m)$ we obtain a complex rank $m$ Hermitian vector bundle $E$ equipped with a flat unitary connection $\nabla$. Let $P_E \to X$ be the associated principal $U(m)$-bundle. Since $\nabla$ is flat, we can use it to lift the foliation on $X$ to a foliation on $P$. In this way $E$ inherits the structure of a foliated vector bundle and $\nabla$ becomes a basic connection with respect to this structure. From Remark \ref{rem:n=1} (i), we see that $E$ inherits a transverse holomorphic structure by taking $\overline{\partial}_E = \nabla^{0,1}$. Our transverse Narasimhan-Seshadri correspondence can thus be summarised as:

\[
\left\{ \begin{array}{c} \text{Isomorphism classes of} \\ \text{rank } m \text{ unitary} \\ \text{ representations of } \pi_1(X) \end{array} \right\} \leftrightarrow
\left\{ \begin{array}{c} \text{Isomorphism classes of polystable rank } m \\ \text{degree } 0 \text{ transverse holomorphic vector bundles} \\ \text{admitting transverse Hermitian metrics} \end{array} \right\}
\]
\\

Moreover, by Corollary \ref{cor:irredsimple}, we see that $E$ is stable if and only if the representation $\rho$ is irreducible.\\

To each transverse holomorphic vector bundle $E$, there is an underlying foliated complex vector bundle obtained by forgetting the holomorphic structure but remembering the transverse structure. We emphasise that in the above correspondence, one has to consider {\em all} possible transverse structures on $E$. If the underlying transverse structure on $E$ is kept fixed, then we only only obtain a subset of unitary representations $\rho : \pi_1(X) \to U(m)$. The following example illustrates this phenomenon and moreover shows that the analogue of the Uhlenbeck compactness theorem \cite{uhl} does not hold in general for basic connections on a vector bundle with fixed transverse structure.

\begin{example}\label{ex:uhlenbeckcomp}
Let $X = T^3 = \mathbb{R}^3/\mathbb{Z}^3$ be the standard $3$-torus and let $(x^1,x^2,x^3)$ denote the standard coordinates on $\mathbb{R}^3$. We equip $X$ with the $1$-dimensional foliation $\mathcal{F} = \langle \xi \rangle$, where
\[
\xi = \xi^1 \frac{\partial}{\partial x^1} + \xi^2 \frac{\partial}{\partial x^2} + \xi^3 \frac{\partial}{\partial x^3},
\]
where we assume that $\xi^1 , \xi^2 , \xi^3$ are rationally independent. The Euclidean metric on $\mathbb{R}^3$ descends to a metric on $X$ and we note that $X$ has a naturally defined transverse K\"ahler structure. Flat unitary line bundles on $X$ are classified by their holonomy homomorphism $\rho : H_1(X , \mathbb{Z}) \to U(1)$. Let $\gamma^1 , \gamma^2 , \gamma^3$ be the cycles corresponding to the three circle factors comprising $T^3 = (S^1)^3$. Then $\rho$ corresponds to an element of the dual torus $\hat{T}^3 = U(1)^3 = (\mathbb{R}^3)^*/(\mathbb{Z}^3)^*$ via $\rho \mapsto ( \rho(\gamma^1) , \rho(\gamma^2) , \rho(\gamma^3) )$. Let us regard $(\mathbb{R}^3)^*$ as the universal cover of the dual torus $\hat{T}^3$ and let $q : (\mathbb{R}^3)^* \to \hat{T}^3$ be the covering map. A point $y = (y_1 , y_2 , y_3) \in (\mathbb{R}^3)^*$ determines a flat unitary connection on the trivial line bundle $\mathbb{C} \times T^3$, namely $\nabla = d + i\alpha_y$, where
\[
\alpha_y = y_1 dx^1 + y_2dx^2 + y_3 dx^3.
\]
Every flat unitary connection is up to gauge equivalence of this form for some $y \in (\mathbb{R}^3)^*$. Clearly $y = (y_1 , y_2 , y_3)$ and $\hat{y} = (\hat{y}_1 , \hat{y}_2 , \hat{y}_3)$ define gauge equivalent connections if and only if $\hat{y} - y \in \mathbb{Z}^3$, so that the gauge equivalence class of $y$ corresponds to the point $q(y) \in \hat{T}^3$ in the dual torus. The flat connections $d + i\alpha_y , d + i\alpha_{\hat{y}}$ corresponding to a pair of points $y,\hat{y} \in (\mathbb{R}^3)^*$ induce the same transverse structure on the trivial line bundle if and only if $\alpha_{\hat{y}} - \alpha_y = (\hat{y}_1 - y_1)dx^1 + (\hat{y}_2 - y_2)dx^2 + (\hat{y}_3 - y_3)dx^3$ is a basic $1$-form, or equivalently if and only if
\[
(\hat{y}_1 - y_1)\xi^1 + (\hat{y}_2 - y_2)\xi^2 + (\hat{y}_3 - y_3)\xi^3 = 0.
\]
In particular $y$ and $\hat{y}$ define flat connections which are gauge equivalent by a {\em basic gauge transformation} if and only if $( \hat{y}_1 - y_1 , \hat{y}_2 - y_2 , \hat{y}_3 - y_3) = (m_1 , m_2 , m_3 ) \in \mathbb{Z}^3$ satisfies $m_1 \xi^1 + m_2 \xi^2 + m_3 \xi^3 = 0$. As we assume $\xi^1 , \xi^2 , \xi^3 $ are rationally independent, the only solutions of this equation are $m_1 = m_2 = m _3 = 0$, i.e. $y = \hat{y}$. We deduce the following:
\begin{itemize}
\item{The $2$-planes $y_1 \xi^1 + y_2 \xi^2 + y_3 \xi^3 = const$ determine a foliation of $(\mathbb{R}^3)^*$ which descends via $q : (\mathbb{R}^3)^* \to \hat{T}^3$ to an irregular foliation on $\hat{T}^3$.}
\item{The leaves of the induced foliation on $\hat{T}^3$ correspond to flat unitary line bundles on $X$ with fixed transverse structure.}
\item{Each leaf is an immersed, but not embedded copy of the non-compact manifold $\mathbb{R}^2$ inside $\hat{T}^3$.}
\end{itemize}
The non-compactness of the leaves of the foliation on $\hat{T}^3$ signals the failure of Uhlenbeck compactness to hold for basic unitary connections on a unitary vector bundle with {\em fixed transverse structure}. Indeed if Uhlenbeck compactness were true in this sense, it would follow that the moduli space of flat basic unitary connections on a vector bundle with fixed transverse structure over a compact space $X$ would be compact. We have just shown an example where the moduli space of flat unitary line bundles for fixed transverse structure is $\mathbb{R}^2$, which in particular is non-compact.
\end{example}

Aside from the Narasimhan-Seshadri theorem, a natural result to extend to the foliated setting is the Harder-Narasimhan filtration:
\begin{theorem}[Transverse Harder-Narasimhan filtration]\label{thm:hn}
Let $X$ be a compact oriented, taut, transverse Hermitian foliated manifold of complex codimension $n=1$. Let $E$ be a transverse holomorphic vector bundle which admits transverse Hermitian metrics. There exists a uniquely determined filtration of $E$
\[
0 = E_0 \subset E_1 \subset \cdots \subset E_k = E
\]
by transverse holomorphic subbundles such that the quotients $F_i = E_i/E_{i-1}$ are semistable and the slopes are strictly increasing:
\[
\mu(F_1) > \mu(F_2) > \cdots > \mu(F_k).
\]
We call this the {\em (transverse) Harder-Narasimhan filtration} of $E$.
\end{theorem}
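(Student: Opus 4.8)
The plan is to build the filtration by repeatedly splitting off a \emph{maximal destabilising subbundle}, so the whole proof reduces to one analytic point: that the supremum of slopes over all transverse holomorphic subbundles of $E$ is finite and is \emph{attained} by an honest subbundle. Write $\mu_{\max}(E) = \sup\{ \mu(F) : 0 \neq F \subseteq E \text{ a transverse holomorphic subbundle}\}$. For finiteness I would use the second fundamental form identity already exploited in the proof of Theorem \ref{thm:heispolystable}: if $\pi$ is the orthogonal projection onto a subbundle $F$ carrying the induced metric, then for $n=1$
\[
2\pi \deg(F) = \int_X tr(\pi \, i\Lambda F_E \, \pi)\, \omega \wedge \chi - \| \partial_{End(E)} \pi \|^2_{L^2}.
\]
Since $F_E$ is fixed and $\pi$ has pointwise operator norm $\le 1$, the first term is bounded by $C_0\, rk(F)\,Vol(X)$ with $C_0$ independent of $F$, so $\mu(F) \le C_0\, Vol(X)/2\pi$ and hence $\mu_{\max}(E) < \infty$.

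The hard part will be attaining the supremum, precisely because in the foliated setting slopes can be arbitrary real numbers rather than rationals with denominator bounded by $rk(E)$, so the classical counting argument is unavailable. Here I would imitate the weakly holomorphic subbundle method of Uhlenbeck--Yau. Ranks are bounded, so I may fix an integer $s$ with $\sup_{rk F = s}\mu(F) = \mu_{\max}(E)$ and choose a sequence of rank $s$ subbundles $F_i$ with $\mu(F_i) \to \mu_{\max}(E)$, represented by smooth basic orthogonal projections $\pi_i$. The displayed identity, together with $\deg(F_i)$ being bounded below, forces $\| \partial_{End(E)} \pi_i \|_{L^2}$ to be uniformly bounded; since $\pi_i$ is self-adjoint we have $\| \overline{\partial}_{End(E)} \pi_i \|_{L^2} = \| \partial_{End(E)} \pi_i \|_{L^2}$, and combined with the pointwise bound $|\pi_i| \le 1$ this bounds the full $L^{2,1}_B$-norms uniformly. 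Using reflexivity of $L^{2,1}_B(End(E))$ (a closed subspace of the reflexive space $L^{2,1}(End(E))$) and the basic Sobolev compactness $L^{2,1}_B \hookrightarrow L^2_B$, I would pass to a subsequence with $\pi_i \rightharpoonup \pi_\infty$ weakly in $L^{2,1}_B$ and strongly (hence, after a further subsequence, pointwise a.e.) in $L^2_B$. The algebraic relations $\pi^* = \pi = \pi^2$ survive under a.e. convergence, and the condition $(id_E - \pi_i)\,\overline{\partial}(\pi_i) = 0$ passes to the limit since it is a product of an $L^2$-strongly convergent factor with an $L^2$-weakly convergent one; thus $\pi_\infty$ is a transverse weakly holomorphic subbundle.

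Next I would invoke Theorem \ref{thm:twh} to represent $\pi_\infty$ by a transverse coherent subsheaf with torsion-free quotient, and then apply Remark \ref{rem:n=1}(iv): because $n=1$ the exceptional set $S$ (of complex codimension $\ge 2$) is empty, so $\pi_\infty$ is genuinely a transverse holomorphic subbundle $F_\infty \subseteq E$ of rank $tr(\pi_\infty) = s$. To see that $F_\infty$ realises the maximal slope, I would feed $\pi_\infty$ back into the degree identity: the first term converges under strong $L^2$ convergence, while $\| \partial_{End(E)} \pi_\infty \|_{L^2} \le \liminf \| \partial_{End(E)} \pi_i \|_{L^2}$ by weak lower semicontinuity of the norm. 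Hence $\deg(F_\infty) \ge \limsup \deg(F_i) = s\,\mu_{\max}(E)$, which forces $\mu(F_\infty) = \mu_{\max}(E)$. This produces a transverse holomorphic subbundle of maximal slope, the foliated substitute for the elementary rationality argument.

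Finally I would assemble the filtration and check uniqueness by the standard arguments, now that the maximal destabiliser exists. Among all subbundles of slope $\mu_{\max}(E)$ there is a unique maximal one $E_1$: if $F$ and $F'$ both attain $\mu_{\max}(E)$, applying the slope comparison to $0 \to F \cap F' \to F \oplus F' \to F + F' \to 0$ shows $F + F'$ again attains $\mu_{\max}(E)$, so the sum of all maximal-slope subbundles is well defined and is the maximal destabilising subbundle $E_1$; it is semistable because each of its subbundles is a subbundle of $E$ and so has slope $\le \mu_{\max}(E) = \mu(E_1)$. I would then pass to the quotient $E/E_1$, which is a transverse holomorphic bundle with the induced quotient transverse Hermitian metric, note that $\mu_{\max}(E/E_1) < \mu_{\max}(E)$ by the usual comparison, and induct on the rank to obtain the remaining filtration, pulling it back to $E$. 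The strict inequalities $\mu(F_1) > \cdots > \mu(F_k)$ and the uniqueness of the filtration then follow exactly as in the classical Harder--Narasimhan theory, the only foliated input being the existence of the maximal destabiliser established above.
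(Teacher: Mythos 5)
Your analytic core is correct and is essentially the paper's own Lemma \ref{lem:max}: finiteness of the supremum of slopes via the identity (\ref{equ:dpiL2}), a maximizing sequence of rank-$s$ projections $\pi_i$, the uniform $L^{2,1}_B$ bound (using self-adjointness to control $\overline{\partial}_{End(E)}\pi_i$ by $\partial_{End(E)}\pi_i$), Banach--Alaoglu plus basic Sobolev compactness, passage to a transverse weakly holomorphic limit, Theorem \ref{thm:twh} together with the $n=1$ fact that the codimension-$\ge 2$ singular set is empty, and weak lower semicontinuity of $\|\partial_{End(E)}\pi\|_{L^2}$ to force $\mu(F_\infty)=\hat{\mu}$. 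The gap lies in what you defer to ``the standard arguments'' and to classical Harder--Narasimhan theory: in the foliated setting those are exactly the steps that do not transfer verbatim, and the paper's proof is structured specifically to avoid them.

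Concretely, two problems. First, your maximal destabiliser is built from sums, but $F+F'$ (the image of $F\oplus F'\to E$) and $F\cap F'$ are a priori only transverse coherent subsheaves, not subbundles; the slope comparison you invoke needs a degree for the non-saturated subsheaf $F+F'$ and the inequality $\mu(\mathrm{sat}(F+F'))\ge\mu(F+F')$. In this paper, degree is only defined for bundles admitting transverse Hermitian metrics and for subsheaves with torsion-free quotient (Definition \ref{def:degreesubsheaf}), and ``saturation does not decrease degree'' is nowhere established; over a classical curve this is the statement that a torsion quotient has nonnegative degree, and in the foliated world it requires a real argument (e.g.\ taking determinants of the injection of equal-rank bundles and applying Theorem \ref{thm:vanishing} to a line bundle first made Hermitian--Einstein by the conformal rescaling of Section \ref{sec:f1} and Corollary \ref{cor:ppstar}). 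Second, uniqueness classically rests on the vanishing of homomorphisms between semistable bundles of strictly decreasing slope, whose classical proof again runs through image subsheaves and saturations, i.e.\ the same missing machinery. The paper instead proves this vanishing analytically: Lemma \ref{lem:vanishing} uses the Jordan--H\"older filtration (Lemma \ref{lem:jh}) and the hard direction of the transverse Hitchin--Kobayashi correspondence (stable $\Rightarrow$ Hermitian--Einstein) together with the maximum-principle vanishing Theorem \ref{thm:vanishing}; and it sidesteps your sum construction entirely by choosing $E_1$ of maximal slope and, among those, of maximal \emph{rank}, which is what makes both the strict monotonicity of the slopes and the identification $E_1=E_1'$ work purely at the bundle level. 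So your proposal is right where the problem is genuinely hard (the attainment of $\hat{\mu}$), but incomplete where you declare it routine: you must either develop the subsheaf/saturation degree theory sketched above or import the paper's analytic vanishing lemma.
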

In order to prove Theorem \ref{thm:hn}, we need some preliminary lemmas. Suppose that $F \subset E$ is a proper non-trivial subbundle. Let $\pi \in \mathcal{C}^\infty_B( X  , End(E) )$ be the orthogonal projection from $E$ to $F$ and let $F_1$ denote the curvature of the Chern connection on $F$ induced by the inclusion $F \to E$. Then as in the proof of Theorem \ref{thm:heispolystable}, we have:
\[
i \Lambda tr( F_1 ) =  i\Lambda tr( \pi F_E \pi ) - | \partial_{End(E)} \pi |^2.
\]
Wedging with $\omega \wedge \chi$ and integrating over $X$, we obtain:
\begin{equation}\label{equ:dpiL2}
i \int_X tr(F_1) \wedge \chi =  \int_X tr( \pi i \Lambda(F_E) \pi ) \wedge \omega \wedge \chi -  || \partial_{End(E)} \pi ||^2_{L^2}.
\end{equation}
This shows that the set $\{ \mu(F) \; | \; F \subset E \}$ of slopes of all possible proper non-trivial transverse holomorphic subbundles $F \subset E$ is bounded above. Let $\hat{\mu}$ be the supremum. We then have:
\begin{lemma}\label{lem:max}
There exists a proper non-trivial transverse holomorphic subbundle $F \subset E$ such that $\mu(F) = \hat{\mu}$.
\end{lemma}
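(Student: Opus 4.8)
The plan is to take a maximising sequence of subbundles, represent them by orthogonal projections, extract a weak limit in $L^{2,1}_B$, verify that the limit is a transverse weakly holomorphic subbundle, and finally use lower semicontinuity of the degree to show that this limit actually attains $\hat{\mu}$.

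First I would choose transverse holomorphic subbundles $F_i \subset E$ with $\mu(F_i) \to \hat{\mu}$. Since $rk(F_i)$ takes only finitely many values in $\{1,\dots,r-1\}$, after passing to a subsequence I may assume $rk(F_i) = s$ is constant, so that $\deg(F_i) \to s\hat{\mu}$. Let $\pi_i \in \mathcal{C}^\infty_B(X,End(E))$ be the orthogonal projection onto $F_i$. Equation (\ref{equ:dpiL2}) (which holds since $n=1$ and $\omega^{n-1}=1$) reads
\[
2\pi\deg(F_i) = \int_X tr(\pi_i\, i\Lambda(F_E)\,\pi_i) \wedge \omega \wedge \chi - \|\partial_{End(E)}\pi_i\|^2_{L^2}.
\]
Each $\pi_i$ is a pointwise orthogonal projection, so $|\pi_i|^2 = s$ almost everywhere; hence the $\pi_i$ are uniformly bounded in $L^\infty$ and the integral term is bounded independently of $i$. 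As $\deg(F_i)$ converges, the identity forces $\|\partial_{End(E)}\pi_i\|_{L^2}$ to be uniformly bounded. Because $\pi_i$ is self-adjoint and $\nabla$ is unitary, $|\overline{\partial}_{End(E)}\pi_i| = |\partial_{End(E)}\pi_i|$ pointwise, so $\nabla\pi_i$ is bounded in $L^2$ as well, and $\{\pi_i\}$ is bounded in $L^{2,1}_B(End(E))$.

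Since $L^{2,1}_B$ is a closed subspace of the reflexive space $L^{2,1}$, it is itself reflexive, so after passing to a subsequence $\pi_i \rightharpoonup \pi_\infty$ weakly in $L^{2,1}_B$; by the basic Sobolev compactness theorem $\pi_i \to \pi_\infty$ strongly in $L^2_B$, and (on a further subsequence) almost everywhere. The relations $\pi_\infty^* = \pi_\infty = \pi_\infty^2$ and $tr(\pi_\infty) = s$ pass to the almost-everywhere limit, so $\pi_\infty \neq 0, Id$. For the holomorphicity condition, $\overline{\partial}_{End(E)}\pi_i \rightharpoonup \overline{\partial}_{End(E)}\pi_\infty$ weakly in $L^2$ while $(Id - \pi_i) \to (Id - \pi_\infty)$ strongly in $L^2$, and the product of a strongly and a weakly $L^2$-convergent sequence converges weakly in $L^1$; since each $(Id-\pi_i)\overline{\partial}_{End(E)}\pi_i$ vanishes, we get $(Id - \pi_\infty)\overline{\partial}_{End(E)}\pi_\infty = 0$ in $L^1_B$. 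Thus $\pi_\infty$ is a transverse weakly holomorphic subbundle in the sense of Definition \ref{def:weaklyholo}. By Theorem \ref{thm:twh} it determines a transverse coherent subsheaf with torsion-free quotient, locally free off a transverse analytic subset $S$ of codimension at least $2$; since $n=1$, such an $S$ is empty (Remark \ref{rem:n=1}(iii),(iv)), so $\pi_\infty$ is smooth and $\pi_\infty$ is the projection onto a genuine transverse holomorphic subbundle $F \subset E$ of rank $s$.

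It remains to check $\mu(F) = \hat{\mu}$. Applying (\ref{equ:dpiL2}) to $F$ with projection $\pi_\infty$ and comparing with the identity for $\pi_i$: the integral terms converge (strong $L^2$ convergence together with the uniform $L^\infty$ bound), while weak lower semicontinuity of the $L^2$-norm gives $\|\partial_{End(E)}\pi_\infty\|^2_{L^2} \leq \liminf_i \|\partial_{End(E)}\pi_i\|^2_{L^2}$. Consequently
\[
2\pi\deg(F) \geq \limsup_i\, 2\pi\deg(F_i) = 2\pi s\hat{\mu},
\]
so $\mu(F) \geq \hat{\mu}$; since $\hat{\mu}$ is the supremum of slopes of proper non-trivial subbundles and $0 < s < r$, we conclude $\mu(F) = \hat{\mu}$. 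The main obstacle is the compactness step: extracting a uniform $L^{2,1}_B$ bound on the projections (which relies on the identity (\ref{equ:dpiL2}) and the self-adjointness identity $|\overline{\partial}_{End(E)}\pi| = |\partial_{End(E)}\pi|$) and checking that the weakly holomorphic condition survives passage to the weak limit. The lower semicontinuity of the degree is the key new ingredient that upgrades the limsup estimate into $\mu(F) \geq \hat{\mu}$, replacing the rationality-of-slopes argument available in the classical non-foliated setting.
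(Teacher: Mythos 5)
Your proposal is correct and follows essentially the same route as the paper's proof: a maximising sequence of projections of fixed rank, a uniform $L^{2,1}_B$ bound extracted from the identity (\ref{equ:dpiL2}), weak compactness giving a limit that is a transverse weakly holomorphic subbundle (hence a genuine subbundle since $n=1$), and weak lower semicontinuity of $\|\partial_{End(E)}\pi\|_{L^2}^2$ to conclude $\mu(F)\ge\hat{\mu}$. If anything, your treatment of the limit of the condition $(Id-\pi_i)\overline{\partial}_E\pi_i=0$ (pairing a strongly and a weakly $L^2$-convergent sequence against $L^\infty$ test sections) is slightly more careful than the paper's norm estimate at the corresponding step.
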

\begin{proof}
Let $\{ F_i \}$ be a sequence of proper non-trivial transverse holomorphic subbundles of $E$ such that $\mu(F_i) \to \hat{\mu}$ as $i \to \infty$. Since there are only finitely many possible ranks that a subbundle of $E$ can have, we may as well assume that the $F_i$ all have the same rank, say $rk(F_i) = s$. Let $\pi_i \in \mathcal{C}^\infty_B( X  , End(E) )$ be the orthogonal projection from $E$ to $F_i$. We can view the $\pi_i$ as being transverse weakly holomorphic subbundles. The idea now is prove that after passing to a subsequence, we can obtain $L^{2,1}_B$-convergence of the $\pi_i$ to a limiting transverse weakly holomorphic subbundle $\pi$. We have already seen that a transverse weakly holomorphic subbundle is given by a genuine transverse holomorphic subbundle outside of a transverse codimension $2$ subset $S \subset X$. But since $X$ has codimension $1$, this means $\pi$ is everywhere given by a transverse holomorphic subbundle. Then we just have to check that the limiting subbundle has slope $\hat{\mu}$.\\

First we look for a uniform $L^{2,1}_B$ bound on the $\{ \pi_i \}$. Since the $\pi_i$ are projections, they satisfy $\pi_i = \pi^*_i = \pi^2_i$, which implies a uniform $L^2_B$ bound. Next by (\ref{equ:dpiL2}) applied to the subbundles $F_i$, we obtain
\[
i \int_X tr(F_{1,i}) \wedge \chi =  \int_X tr( \pi_i i \Lambda(F_E) \pi_i ) \wedge \omega \wedge \chi -  || \partial_{End(E)} \pi_i ||^2_{L^2},
\]
where $F_{1,i}$ is the curvature of the induced Chern connection on $F_i$. Then since $\deg(F_i) \to s \hat{\mu}$ as $i \to \infty$, we have a uniform bound on $i \int_X tr(F_{1,i}) \wedge \chi$. Clearly we can also bound $\int_X tr( \pi_i i \Lambda(F_E) \pi_i ) \wedge \omega \wedge \chi$ uniformly and so this gives a uniform bound for the $L^2$-norm of $\partial_{End(E)} \pi_i$ and therefore we obtain a uniform $L^{2,1}_B$-bound on the $\pi_i$. Now we apply the Banach-Alaogu theorem so that on passing to a subsequence, the $\pi_i$ converge weakly in $L^{2,1}_B$ to some $\pi \in L^{2,1}_B( X , End(E) )$. We claim that $\pi$ is a transverse weakly holomorphic subbundle. By Sobolev compactness, weak convergence in $L^{2,1}_B$ implies strong convergence in $L^2_B$ and it follows that $\pi = \pi^* = \pi^2$ in $L^1_B$. It remains to show that $(1-\pi)\overline{\partial}_E \pi = 0$ in $L^1_B$. Using $\pi^2 = \pi$, this is equivalent to showing
\[
(\overline{\partial}_E \pi) \pi = 0 \; \; \text{in } L^1_B.
\]
Note that weak convergence $\pi_i \to \pi$ in $L^{2,1}_B$ implies that
\begin{equation}\label{equ:conv}
\overline{\partial}_E \pi_i \to \overline{\partial}_E \pi \; \; \; \text{weakly in } L^2_B. 
\end{equation}
Also, since $\pi_i$ is the projection to the transverse holomorphic subbundle $F_i$, we have
\begin{equation*}
( \overline{\partial}_E \pi_i) \pi_i = 0 \; \; \text{for all } i.
\end{equation*}
We then have
\begin{equation*}
\begin{aligned}
|| (\overline{\partial}_E \pi) \pi ||_{L^1_B} &= || (\overline{\partial}_E \pi)\pi - (\overline{\partial}_E \pi_i)\pi_i ||_{L^1_B} \\
&= || (\overline{\partial}_E \pi)\pi - (\overline{\partial}_E \pi_i )\pi + (\overline{\partial}_E \pi_i )\pi - (\overline{\partial}_E \pi_i)\pi_i ||_{L^1_B}  \\
&\le || ( \overline{\partial}_E \pi - \overline{\partial} \pi_i ) \pi ||_{L^1_B} + || \overline{\partial}_E \pi_i ( \pi - \pi_i) ||_{L^1_B}.
\end{aligned}
\end{equation*}
But $|| ( \overline{\partial}_E \pi - \overline{\partial} \pi_i ) \pi ||_{L^1_B} \to 0$ by (\ref{equ:conv}) and
\[
|| \overline{\partial}_E \pi_i ( \pi - \pi_i) ||_{L^1_B} \le || \overline{\partial}_E \pi_i ||_{L^2_B} || \pi - \pi_i ||_{L^2_B} \to 0
\]
since $|| \overline{\partial}_E \pi_i ||_{L^2_B}$ is bounded and $\pi_i \to \pi$ strongly in $L^2_B$. This shows that $(\overline{\partial}_E \pi) \pi = 0$ in $L^1_B$, as required. Therefore $\pi$ is a transverse weakly holomorphic subbundle, and as argued above, $\pi$ must actually arise from a genuine holomorphic subbundle $F \subset E$, since $n = 1$. Clearly $rk(F) = s$, so to show that $\mu(F) = \hat{\mu}$, it suffices to show that $\deg(F) = \lim_{i \to \infty} \deg(F_i)$. From (\ref{equ:dpiL2}) applied to $\pi_i$ and to $\pi$, we get
\begin{equation}\label{equ:integral1}
2 \pi \deg(F_i) + || \partial_{End(E)} \pi_i ||_{L^2_B}^2 = \int_X tr( \pi_i i \Lambda(F_E) \pi_i ) \wedge \omega \wedge \chi
\end{equation}
and
\begin{equation}\label{equ:integral2}
2 \pi \deg(F) + || \partial_{End(E)} \pi ||_{L^2_B}^2 = \int_X tr( \pi i \Lambda(F_E) \pi ) \wedge \omega \wedge \chi.
\end{equation}
But since $\pi_i \to \pi$ weakly in $L^{2,1}_B$, we have $|| \partial_{End(E)} \pi ||_{L^2_B}^2 \le \liminf_{i \to \infty} || \partial_{End(E)} \pi_i ||_{L^2_B}^2$ and
\[
\int_X tr( \pi_i i \Lambda(F_E) \pi_i ) \wedge \omega \wedge \chi \to \int_X tr( \pi i \Lambda(F_E) \pi ) \wedge \omega \wedge \chi \; \; \text{as } i \to \infty.
\]
Using (\ref{equ:integral1}) and (\ref{equ:integral2}), we therefore have
\begin{equation*}
\begin{aligned}
2 \pi \deg(F) + || \partial_{End(E)} \pi ||_{L^2_B}^2 &= \int_X tr( \pi i \Lambda(F_E) \pi ) \wedge \omega \wedge \chi \\
&= \liminf_{i \to \infty} \int_X tr( \pi_i i \Lambda(F_E) \pi_i ) \wedge \omega \wedge \chi \\
&= \liminf_{i \to \infty} ( 2\pi \deg(F_i) + ||\partial_{End(E)} \pi_i ||_{L^2_B}^2 ) \\
&\ge 2\pi s \hat{\mu} + || \partial_{End(E)} \pi ||_{L^2_B}^2.
\end{aligned}
\end{equation*}
Thus $\deg(F) \ge s \hat{\mu}$ and so $\mu(F) \ge \hat{\mu}$. But from the definition of $\hat{\mu}$ we must also have $\mu(F) \le \hat{\mu}$ and hence $\mu(F) = \hat{\mu}$.
\end{proof}

\begin{lemma}[Jordan-H\"older filtration]\label{lem:jh}
Let $X$ be a compact oriented, taut, transverse Hermitian foliated manifold of complex codimension $n=1$. Let $E$ be a transverse holomorphic bundle which admits transverse Hermitian metrics. If $E$ is semistable, then $E$ admits a filtration
\[
0 = E_0 \subset E_1 \subset \cdots \subset E_k = E
\]
by transverse holomorphic subbundles such that the quotients $F_i = E_i/E_{i-1}$ are all stable and satisfy $\mu(F_i) = \mu(E)$.
\end{lemma}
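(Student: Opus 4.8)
The plan is to argue by induction on the rank $r = rk(E)$, mimicking the classical construction of the Jordan--H\"older filtration while exploiting the simplifications peculiar to complex codimension $1$ (Remark \ref{rem:n=1}). The base case $r=1$ is immediate, since a transverse holomorphic line bundle is vacuously stable. For the inductive step assume $E$ is semistable of rank $r \ge 2$. If $E$ happens to be stable we are done with $k=1$ and $E_1 = E$, so suppose $E$ is \emph{not} stable.

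First I would isolate the smallest stable piece $E_1$. Because $E$ is semistable but not stable, there is at least one proper non-trivial transverse holomorphic subbundle $F \subset E$ with $\mu(F) = \mu(E)$: non-stability produces some $F$ with $\mu(F) \ge \mu(E)$, and semistability forces $\mu(F) \le \mu(E)$. Among all transverse holomorphic subbundles of $E$ of slope exactly $\mu(E)$, choose $E_1$ of minimal rank. This minimum exists simply because the rank takes values in the finite set $\{1,\dots,r\}$ and the collection is non-empty; crucially, we do \emph{not} need the set of all slopes to be discrete (it is not, in the foliated setting), since we minimise rank only among subbundles of the one fixed slope $\mu(E)$. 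I claim $E_1$ is stable. Indeed, any proper non-trivial transverse holomorphic subbundle $G \subset E_1$ is also a subbundle of $E$, so semistability of $E$ gives $\mu(G) \le \mu(E) = \mu(E_1)$; were equality to hold, $G$ would be a subbundle of $E$ of slope $\mu(E)$ with $rk(G) < rk(E_1)$, contradicting minimality. Hence $\mu(G) < \mu(E_1)$ for every such $G$, so $E_1$ is stable by Remark \ref{rem:n=1}(v).

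Next I would show the quotient $Q = E/E_1$ is semistable with $\mu(Q) = \mu(E)$. By Remark \ref{rem:n=1}(iv) the quotient by a subbundle is again a transverse holomorphic bundle, and it inherits a transverse Hermitian metric from that of $E$, so $\deg(Q)$ is defined. Degree is additive along the short exact sequence $0 \to E_1 \to E \to Q \to 0$: with the induced metrics one has $tr(F_E) = tr(F_{E_1}) + tr(F_Q)$ exactly as basic forms, because the two second-fundamental-form contributions $-tr(\beta^* \wedge \beta)$ and $-tr(\beta \wedge \beta^*)$ cancel under the trace, and this equality of degrees persists for the intrinsic degrees by the metric-independence established in Definition \ref{def:degslopebundle}. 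Combining additivity of rank and degree with $\mu(E_1) = \mu(E)$ gives at once $\mu(Q) = \mu(E)$. For semistability, let $\bar{G} \subset Q$ be a transverse holomorphic subbundle with preimage $G \subset E$, so that $E_1 \subseteq G$ with $\deg(G) = \deg(E_1) + \deg(\bar{G})$ and $rk(G) = rk(E_1) + rk(\bar{G})$. Semistability of $E$ yields $\deg(G) \le \mu(E)\,rk(G)$, and substituting $\deg(E_1) = \mu(E)\,rk(E_1)$ leaves $\deg(\bar{G}) \le \mu(E)\,rk(\bar{G})$, i.e. $\mu(\bar{G}) \le \mu(E) = \mu(Q)$. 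Thus $Q$ is semistable of slope $\mu(E)$.

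Finally, since $rk(Q) = r - rk(E_1) < r$, the induction hypothesis supplies a filtration $0 = Q_0 \subset Q_1 \subset \cdots \subset Q_{k-1} = Q$ by transverse holomorphic subbundles with stable quotients $Q_j/Q_{j-1}$, all of slope $\mu(E)$. Pulling this back under $E \to Q$, i.e. letting $E_{j+1}$ be the preimage of $Q_j$, yields the desired filtration $0 = E_0 \subset E_1 \subset \cdots \subset E_k = E$, whose successive quotients are $E_1$ together with the $Q_j/Q_{j-1}$, each stable of slope $\mu(E)$. I expect the only genuinely delicate points to be conceptual rather than computational: one must verify that the minimal-rank subbundle of slope $\mu(E)$ exists and is an \emph{honest subbundle}, which is exactly where the codimension-$1$ hypothesis enters through Remark \ref{rem:n=1}(iv), and one must justify additivity of the transverse degree, which is handled by the same basic Stokes' theorem plus Gauduchon argument used throughout. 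Everything else is the standard slope bookkeeping transported verbatim to the transverse setting.
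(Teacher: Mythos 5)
Your proof is correct and is precisely the argument the paper has in mind: the paper's own proof of Lemma \ref{lem:jh} consists of the single sentence that the result follows ``by induction on the rank of $E$ and is straightforward,'' and your write-up is exactly that induction, with the key points (existence of a slope-$\mu(E)$ subbundle via non-stability plus semistability, minimal rank giving stability of $E_1$, additivity of degree via cancellation of second fundamental form traces, and semistability of the quotient) filled in correctly, using Remark \ref{rem:n=1}(iv)--(v) where the codimension-$1$ hypothesis is needed.
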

\begin{proof}
The proof is by induction on the rank of $E$ and is straightforward.
\end{proof}
Note that the filtration in the above lemma is in general not unique.

\begin{lemma}\label{lem:vanishing}
Let $X$ be a compact oriented, taut, transverse Hermitian foliated manifold of complex codimension $n=1$. Let $E,F$ be transverse holomorphic bundles admitting transverse Hermitian metrics. Suppose that $E,E'$ are semistable and that $\mu(E) > \mu(E')$. Then any basic holomorphic endomorphism $\alpha : E \to E'$ vanishes.
\end{lemma}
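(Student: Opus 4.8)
The plan is to run the classical slope argument for semistable bundles, using the codimension-one simplifications of Remark \ref{rem:n=1} to move freely between transverse coherent sheaves and transverse holomorphic subbundles. Suppose, for contradiction, that $\alpha\colon E\to E'$ is a nonzero basic holomorphic homomorphism. First I would form the kernel and image sheaves $\mathcal{K}=Ker(\alpha)\subseteq\mathcal{O}(E)$ and $\mathcal{I}=Im(\alpha)\subseteq\mathcal{O}(E')$, with the first isomorphism theorem giving $\mathcal{O}(E)/\mathcal{K}\cong\mathcal{I}$. Since $\mathcal{I}$ is a subsheaf of the locally free sheaf $\mathcal{O}(E')$ it is torsion-free, hence so is $\mathcal{O}(E)/\mathcal{K}$; by Remark \ref{rem:n=1}(iii) and (iv) both $\mathcal{K}$ and $\mathcal{I}$ are then locally free, and I write $K\subset E$, $I\subseteq E'$ for the corresponding transverse holomorphic subbundles. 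Because $\alpha\neq0$ we have $K\neq E$, so $s:=rk(I)=rk(E/K)$ satisfies $0<s\le rk(E')$.

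The core of the argument is to sandwich $\mu(I)$ between the two slopes. On the $E$-side, $K$ is a transverse holomorphic subbundle with torsion-free quotient $E/K\cong I$, so semistability of $E$ (equivalently, the statement that every torsion-free quotient of $E$ has slope at least $\mu(E)$, which follows from the additivity $\deg(E)=\deg(K)+\deg(E/K)$) yields $\mu(I)=\mu(E/K)\ge\mu(E)$. This additivity in turn follows from the Chern--Weil description of degree, since the difference $tr(F_E)-tr(F_K)-tr(F_{E/K})$ has vanishing integral $\int_X(\,\cdot\,)\wedge\chi$ by (\ref{equ:basicstokes}) and the Gauduchon condition, exactly as in the proof that $\deg$ is metric-independent in Definition \ref{def:degslopebundle}. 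On the $E'$-side I would replace $I$ by its saturation $\overline{I}\subseteq E'$, a transverse holomorphic subbundle of rank $s$ with $E'/\overline{I}$ torsion-free; saturation does not decrease degree, so $\mu(\overline{I})\ge\mu(I)$, while semistability of $E'$ applied to $\overline{I}$ gives $\mu(\overline{I})\le\mu(E')$. Concatenating these estimates produces
\[
\mu(E)\le\mu(I)\le\mu(\overline{I})\le\mu(E'),
\]
contradicting the hypothesis $\mu(E)>\mu(E')$. Hence $\alpha=0$.

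The steps requiring genuine care are the sheaf-theoretic inputs in the transverse setting, and I expect the degree monotonicity under saturation to be the main obstacle, since the paper defines $\deg$ only for subsheaves with torsion-free quotient. I would handle it as follows. The inclusion $I\hookrightarrow\overline{I}$ is a basic holomorphic map of rank-$s$ transverse holomorphic bundles which is an isomorphism outside a transverse analytic subset of complex codimension at least $1$; taking top exterior powers gives a nonzero basic holomorphic section of the transverse holomorphic line bundle $\det(\overline{I})\otimes\det(I)^{*}$. By a transverse Poincar\'e--Lelong identity its degree equals an effective transverse divisor contribution plus a term that drops out under (\ref{equ:basicstokes}), hence is nonnegative, giving $\deg(\overline{I})\ge\deg(I)$ and so $\mu(\overline{I})\ge\mu(I)$. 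The remaining inputs, namely that kernel and image of a basic holomorphic map are transverse coherent sheaves and that torsion-freeness forces local freeness when $n=1$, are purely local and reduce, in a foliated chart $U=V\times W$ with $V\subseteq\mathbb{C}$, to the classical one-dimensional statements; the only global ingredient needed to assemble the degree computations is the basic Stokes' theorem (\ref{equ:basicstokes}), which is available here by tautness.
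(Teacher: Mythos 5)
Your overall strategy is sound and genuinely different from the paper's. The paper proves this lemma by taking Jordan-H\"older filtrations of $E$ and $E'$ (Lemma \ref{lem:jh}), endowing the stable quotients with transverse Hermitian-Einstein metrics via the Hitchin-Kobayashi correspondence (Theorem \ref{thm:trhk1}), and then killing $\alpha$ piece by piece using the vanishing theorem (Theorem \ref{thm:vanishing}) applied to the relevant $Hom$ bundles. Your argument is instead the classical kernel/image/saturation slope computation, which has the advantage of not invoking the hard analytic direction of the Hitchin-Kobayashi theorem at all. Most of your reductions are legitimate in this setting: the kernel and image sheaves are transverse coherent, torsion-freeness forces local freeness when $n=1$ (Remark \ref{rem:n=1}), degree is additive on short exact sequences of bundles (Chern-Weil with induced metrics, as the paper itself uses in the proof of Theorem \ref{thm:hn}), so semistability of $E$ applied to $K$ gives $\mu(I)=\mu(E/K)\ge\mu(E)$, while semistability of $E'$ applied to the saturation $\overline{I}$ gives $\mu(\overline{I})\le\mu(E')$.

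However, the step you yourself flag as the main obstacle, $\deg(\overline{I})\ge\deg(I)$, is a genuine gap as written. You appeal to a ``transverse Poincar\'e-Lelong identity'', but no such identity is established in the paper, and proving one is not routine: the basic Stokes theorem (\ref{equ:basicstokes}) applies only to smooth basic forms, whereas $\log|\sigma|^2$ is singular along the degeneracy locus, and one must also show that the divisor contribution is finite and nonnegative, which requires a structure theory of transverse divisors (for $n=1$ they turn out to be finite unions of compact leaves weighted by multiplicities, but that too needs proof). Fortunately the gap can be closed using only tools the paper provides. The inclusion $I\hookrightarrow\overline{I}$ gives a nonzero basic holomorphic section $\sigma$ of the transverse holomorphic line bundle $L=\det(\overline{I})\otimes\det(I)^*$, and $L$ admits a transverse Hermitian metric, since $\overline{I}\subseteq E'$ carries the restricted metric and $I\cong E/K$ carries the quotient metric. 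Now observe that any transverse holomorphic line bundle admitting a transverse Hermitian metric $h_0$ admits a transverse Hermitian-Einstein metric: writing $h=e^f h_0$, the Hermitian-Einstein equation becomes $P(f)=\gamma - i\Lambda F_{h_0}$, which is solvable for a suitable constant $\gamma$ by the decomposition $\mathcal{C}^\infty_B(X,\mathbb{R}) = Im(P|_{\mathcal{C}^\infty_B(X,\mathbb{R})}) \oplus \mathbb{R}$ of Corollary \ref{cor:ppstar}(ii). Theorem \ref{thm:vanishing} then shows that if $\deg(L)<0$ the section $\sigma$ would have to vanish; hence $\deg(L)=\deg(\overline{I})-\deg(I)\ge 0$, which is exactly your monotonicity statement. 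With this substitution in place of the Poincar\'e-Lelong sketch, your proof is complete and remains independent of the Hitchin-Kobayashi existence theorem.
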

\begin{proof}
Let
\[
0 = E_0 \subset E_1 \subset \cdots \subset E_k = E
\]
and
\[
0 = E'_0 \subset E'_1 \subset \cdots \subset E'_l = E'
\]
be filtrations on $E$ and $E'$ as in Lemma \ref{lem:jh}. We first show that $\alpha|_{E_1} = 0$. To see this consider the projection $\alpha|_{E_1} : E_1 \to E' \to E'/E'_{l-1} = F'_{l}$. We have that $E_1$ and $F'_l$ are stable and that $\mu(E_1) = \mu(E) > \mu(E') = \mu(F'_l)$. Thus $E_1$ and $F'_l$ admit Hermitian-Einstein metrics, inducing a Hermitian-Einstein metric on $Hom(E_1 , F'_l)$. But $\mu( Hom(E_1 , F'_l) ) < 0$, so by Theorem \ref{thm:vanishing}, any basic holomorphic section of $Hom(E_1 , F'_l)$ is zero. This shows that $\alpha|_{E_1}$ maps into $E'_{l-1}$. Continuing in this fashion we eventually get that $\alpha|_{E_1} = 0$. Now replacing $E$ by $E/E_1$ and arguing as above, we see that $\alpha|_{E_2} = 0$. Continuing in this fashion we find that $\alpha = 0$.
\end{proof}

\begin{proof}[Proof of Theorem \ref{thm:hn}]
We first prove existence. If $E$ is semistable, we are done. Otherwise, by Lemma \ref{lem:max} there exists a proper non-trivial transverse holomorphic subbundle $E_1 \subset E$ such that $\mu(E_1)$ is maximal. We also choose $E_1$ to have maximal rank amongst all such subbundles of $E$. By maximality of $\mu(E_1)$, it follows that $E_1$ is semistable. If $E/E_1$ is not semistable, choose a proper non-trivial subbundle $F_2 \subset E/E_1$ with maximal slope and with rank maximal amongst all such subbundles of $E/E_1$. Then $F_2$ is semistable. Let $E_2$ be the preimage of $F_2$ with respect to the projection $E \to E/E_1$. Continuing in this fashion we obtain a filtration 
\[
0 = E_0 \subset E_1 \subset \cdots \subset E_k = E
\]
by transverse holomorphic subbundles such that the quotients $F_i = E_i/E_{i-1}$ are semistable. To see that the slopes are strictly increasing, suppose that $\mu(F_i) \le \mu(F_{i+1})$. From the short exact sequence $0 \to F_i \to E_{i+1}/E_{i-1} \to F_{i+1} \to 0$, we obtain $\mu( E_{i+1}/E_{i-1}) \ge \mu(F_i)$. If $\mu(E_{i+1}/E_{i-1}) > \mu(F_i)$, this contradicts maximality of the slope of $F_i$ as a subbundle of $E/E_{i-1}$. If $\mu(E_{i+1}/E_{i-1}) = \mu(F_i)$, this contradicts maximality of the rank of $F_i$ amongst all subbundles of $E/E_{i-1}$ with maximal slope. Therefore $\mu(F_i) > \mu(F_{i+1})$, so the slopes are strictly increasing.\\

We now prove uniqueness. Let
\[
0 = E'_0 \subset E'_1 \subset \cdots \subset E'_l = E
\]
be another filtration by transverse holomorphic subbundles such that the quotients $F'_i = E'_i/E'_{i-1}$ are semistable and the slopes are strictly increasing. By the definition of $E_1$, we have $\mu(E_1) \ge \mu(E'_1)$ and thus $\mu(E_1) \ge \mu(E'_1) = \mu(F'_1) > \mu(F'_2) > \cdots > \mu(F'_l)$. So by Lemma \ref{lem:vanishing}, the map $\alpha : E_1 \to E \to E/E'_l$ is zero, hence $E_1 \subseteq E'_{l-1}$. Continuing in this manner we eventually see that $E_1 \subseteq E'_1$. Applying Lemma \ref{lem:vanishing} to the inclusion $E_1 \to E'_1$ we see that $\mu(E_1) = \mu(E'_1)$. Then by maximality of the rank of $E_1$ amongst all subbundles of $E$ with maximal slope, we see that $E_1 = E'_1$. Repeating the above argument for $E/E_1$ in place of $E$, we see that $E_2 = E'_2$. Continuing in this way, we get that  the two filtrations of $E$ coincide.
\end{proof}

\subsection{Hitchin-Kobayashi for Sasakian manifolds}\label{sec:sasaki}

Sasakian geometry provides a wealth of interesting examples of taut, transverse Hermitian foliations and were the original motivation for us to develop a transverse Hitchin-Kobayashi correspondence. For these reasons it seems worthwhile to recall the definition of Sasaki manifolds and to explicitly state the transverse Hitchin-Kobayashi correspondence for them, which we do in Corollary \ref{cor:hksasaki}. In Section \ref{sec:instantons}, we will see that the Hitchin-Kobayshi correspondence for Sasaki manifolds is relevant to the study of higher-dimensional instantons, such as contact instantons in $5$-dimensions.\\

Let $X$ be a manifold of dimension $2n+1$. Recall that an {\em almost contact metric structure} $(\xi,\eta,\Phi,g_X)$ on $X$ consists of a vector field $\xi$, $1$-form $\eta$, endomorphism $\Phi \colon TX \to TX$ and a Riemannian metric $g_X$ such that $\eta(\xi) =1$, $\Phi^2 = -I + \eta \otimes \xi$ and $g_X( \Phi U , \Phi V) = g_X(U,V) - \eta(U) \eta(V)$ for all vector fields $U,V$. Equivalently this is a reduction of structure of the tangent bundle to $U(n) \subset GL(2n+1,\mathbb{R})$. We let $V$ be the rank $1$ subbundle spanned by $\xi$ and $H = {\rm Ker}(\eta)$ the annihilator of $\eta$. Then we have an orthogonal decomposition $TX = V \oplus H$ together with a unitary structure on $H$. We think of $\xi$ as generating a $1$-dimensional foliation on $X$. Thus $V$ is the distribution tangent to the foliation and $H$ is its orthogonal complement. Furthermore $\eta$ defines a leafwise volume form. We also let $g$ denote the restriction of $g_X$ to $H$.\\

The restriction $J = \Phi|_H$ of $\Phi$ to $H$ defines a complex structure on $H$ and letting $\omega(U,V) = g(U,\Phi V)$, we have that $\omega$ is a $2$-form which restricted to $H$ is the Hermitian $2$-form associated to $J$. We say that $X$ is a {\em contact metric manifold} if in addition $d\eta = \omega$. This implies that $\eta$ is a contact form and $\xi$ the associated Reeb vector field. Moreover it also implies that the foliation is taut.\\

We recall that a contact metric manifold $X$ called {\em $K$-contact} if $\xi$ is a Killing vector for $g_X$. In this case, we have that $(\xi , g)$ is a taut Riemannian foliation and $J$ is a transverse almost complex structure. Recall that $X$ is called {\em Sasakian} if the transverse almost complex structure $J$ is integrable \cite[\textsection 6]{bg}. In particular, a Sasakian manifold of dimension $2n+1$ is equipped with a taut, transverse Hermitian foliation of complex codimension $n$. Note that $d\eta= \omega$ implies that $\omega$ is closed, so Sasakian manifolds are transverse K\"ahler and in particular transverse Gauduchon. Thus if $X$ is compact, the transverse Hitchin-Kobayshi correspondence applies:
\begin{corollary}\label{cor:hksasaki}
Let $X$ be a compact Sasakian manifold. Then the Hitchin-Kobayshi correspondence holds for $X$. Namely, a transverse holomorphic vector bundle $E$ which admits transverse Hermitian metrics admits a transverse Hermitian-Einstein metric if and only if $E$ is polystable.
\end{corollary}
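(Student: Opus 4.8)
The plan is to deduce the corollary directly from the main theorem (Theorem \ref{thm:trhk1}) by checking that a compact Sasakian manifold satisfies all of its hypotheses. Theorem \ref{thm:trhk1} requires $X$ to be a compact oriented manifold carrying a taut Riemannian foliation with a transverse Hermitian structure which is, moreover, transverse Gauduchon (so that the degree of Definition \ref{def:degslopebundle}, and hence stability, is well-defined and the continuity-method argument of Section \ref{sec:trhk} applies). So the entire content of the proof is verifying these structural conditions, all of which have essentially been assembled in the preceding discussion of Sasakian geometry.

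Concretely, I would proceed as follows. First, recall that the contact metric condition $d\eta = \omega$ built into a Sasakian structure forces the Reeb foliation $\langle \xi \rangle$ to be taut, with leafwise volume form $\chi = \eta$, and that $K$-contactness makes $(\xi,g)$ a Riemannian foliation with $J = \Phi|_H$ a transverse almost complex structure. The Sasakian hypothesis is precisely that this transverse almost complex structure is integrable, so $(g,J)$ is a genuine transverse Hermitian structure of complex codimension $n$. Orientability is automatic: the splitting $TX = V \oplus H$ together with the transverse complex orientation on $H$ and the coorientation of the leaves given by $\eta$ orients $X$ via $vol_X = vol_T \wedge \chi$. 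Thus the foliation is a compact, oriented, taut, transverse Hermitian foliation, exactly the setting of Sections \ref{sec:tg}--\ref{sec:trhk}.

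The remaining point is the transverse Gauduchon condition. Here the key observation is that $d\eta = \omega$ implies $\omega$ is closed as a basic form, i.e. $d\omega = 0$, so that $X$ is in fact transverse K\"ahler; consequently $\partial \overline{\partial}(\omega^{n-1}) = 0$ holds trivially and the metric is already transverse Gauduchon in the sense of Definition \ref{def:trgauduchon}, with no need to invoke the conformal rescaling of Theorem \ref{thm:trgauduchon}. With every hypothesis verified, Theorem \ref{thm:trhk1} applies verbatim and yields the stated equivalence between the existence of a transverse Hermitian-Einstein metric and polystability. I do not expect any genuine obstacle in this argument, as it is purely a matter of matching definitions; the only thing to be careful about is confirming that the transverse K\"ahler (hence Gauduchon) property is what makes $\deg(E)$ metric-independent and the whole machinery of Sections \ref{sec:stability}--\ref{sec:trhk} available, which is exactly where tautness and the Gauduchon condition were used.
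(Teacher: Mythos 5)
Your proposal is correct and matches the paper's own argument: the paper likewise observes that a compact Sasakian manifold is a compact, taut, transverse Hermitian foliation (tautness from $d\eta=\omega$, the Riemannian foliation and transverse complex structure from the $K$-contact and Sasakian conditions), that $d\eta=\omega$ forces $d\omega=0$ so the structure is transverse K\"ahler and hence transverse Gauduchon, and then applies Theorem \ref{thm:trhk1} directly. No further comment is needed.
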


\begin{remark}
Corollary \ref{cor:hksasaki} was proven in the special case of compact quasi-regular Sasaki manifolds in \cite{bisc}.
\end{remark}

\begin{remark}
The only property of Sasakian manifolds we have used for this result is that they are transverally K\"ahler. Other well-known examples of transversally K\"ahler geometries include $3$-Sasakian manifolds \cite{bg} and co-K\"ahler manifolds \cite{li}.
\end{remark}

\subsection{Contact instantons and higher-dimensional generalisations}\label{sec:instantons}

In this section we look at the relation between the transverse Hermitian-Einstein equations and various types of higher-dimensional instantons. We find a number of instances of higher-dimensional instanton equations which are special cases of the transverse Hermitian-Einstein equations.\\

The usual anti-self-dual instanton equation for a connection $A$ on a $4$-manifold are given by $*F_A = -F_A$. There is a natural extension of the anti-self-duality equations to $d \ge 4$ dimensions given by choosing a $(d-4)$-form $\Omega$. We say that a connection $A$ is an {\em $\Omega$-instanton} \cite{cdfn,dt,tian} if the curvature $2$-form $F_A$ satisfies:
\begin{equation}\label{equ:higherinstanton}
*F_A = -  \Omega \wedge F_A.
\end{equation}
If $\Omega$ is closed, then differentiating in (\ref{equ:higherinstanton}) and using the Bianchi identity, one finds that $A$ satisfies the Yang-Mills equations $d_A( *F_A) = 0$. However, we will see that there are examples of $(d-4)$-forms $\Omega$ which are not closed and yet every solution of (\ref{equ:higherinstanton}) satisfies the Yang-Mills equations.\\

We will say that a connection $A$ on a vector bundle $E$ has {\em trivial determinant} if the induced connection on $\det(E)$ admits a constant section, that is, if $E$ admits covariantly constant volume form. Clearly this implies that the curvature of $A$ is trace-free.

\begin{proposition}\label{prop:heinstanton}
Let $X$ be an oriented, taut, transverse Hermitian foliation of complex codimension $n$. Let $E$ be a foliated complex vector bundle with a basic Hermitian metric and let $A$ be a basic unitary connection with trivial determinant with curvature $F_A$. Then $A$ is a transverse Hermitian-Einstein connection if and only if $A$ is an $\Omega$-instanton, where $\Omega = \frac{\omega^{n-2}}{(n-2)!} \wedge \chi$.
\end{proposition}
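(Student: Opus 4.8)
The plan is to reduce the equation on $X$, which involves the full Hodge star $*$, to a purely transverse identity on the normal bundle $H$, and then to settle that transverse identity by a pointwise computation in Hermitian linear algebra. The only genuinely foliated ingredient is the relationship between $*$ and the basic Hodge star $*_B$ of Lemma \ref{lem:pstar}; everything else is the classical higher-dimensional instanton computation applied fibrewise to $H$.

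First I would record that for any basic (hence horizontal) form $\alpha \in \Omega^j_B(X)$ one has $* \alpha = (*_B \alpha) \wedge \chi$. This follows from the orthogonal splitting $TX = V \oplus H$ together with the normalisation $vol_X = vol_T \wedge \chi$: testing the defining identity $\gamma \wedge * \alpha = \langle \gamma , \alpha \rangle \, vol_X$ against horizontal $\gamma$ reduces it to the transverse relation $\gamma \wedge (*_B \alpha) = \langle \gamma , \alpha \rangle \, vol_T$, while any vertical component of $\gamma$ annihilates both sides. Since the curvature $F_A$ of a basic connection is a basic $2$-form, applying this to $\alpha = F_A$ and using $\chi \wedge F_A = F_A \wedge \chi$ turns the $\Omega$-instanton equation into
\[
(*_B F_A) \wedge \chi = -\left( \frac{\omega^{n-2}}{(n-2)!} \wedge F_A \right) \wedge \chi.
\]
Because the map $\beta \mapsto \beta \wedge \chi$ is injective on horizontal forms, this is equivalent to the transverse equation
\[
*_B F_A = -\frac{\omega^{n-2}}{(n-2)!} \wedge F_A = -\frac{1}{(n-2)!} L^{n-2} F_A.
\]

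Next I would resolve this pointwise on $H$, which carries the Hermitian structure $(g,I,\omega)$; this step is identical to the non-foliated case, so I would only invoke the Weil identity for the Hodge star acting on primitive forms. Decompose the $End(E)$-valued real $2$-form $F_A$ according to type together with the Lefschetz decomposition: its $(2,0)$ and $(0,2)$ parts (which are automatically primitive, since $\Lambda$ kills them), its primitive $(1,1)$ part $F_0$, and its trace part $\frac{1}{n}(\Lambda F_A)\, \omega$. The Weil identity gives $*_B = -\frac{1}{(n-2)!} L^{n-2}$ exactly on primitive $(1,1)$-forms, whereas on primitive $(2,0)$- and $(0,2)$-forms it produces the opposite sign and on the $\omega$-component it produces a different numerical factor (the ratio $\tfrac{1}{(n-1)!}$ versus $\tfrac{1}{(n-2)!}$). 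Matching both sides of the displayed transverse equation therefore forces the $(2,0)$, $(0,2)$ and trace components to vanish, so that $*_B F_A = -\frac{1}{(n-2)!} L^{n-2} F_A$ holds if and only if $F_A$ is a primitive $(1,1)$-form, i.e. $F_A$ is of type $(1,1)$ and $\Lambda F_A = 0$.

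Finally I would connect this with the Hermitian-Einstein condition. The hypothesis that $A$ has trivial determinant means $tr(F_A) = 0$; taking the trace of $i\Lambda F_A = \gamma_A \, id_E$ then forces $\gamma_A = 0$. Hence for a connection with trivial determinant, being transverse Hermitian-Einstein is equivalent to $F_A$ being a primitive $(1,1)$-form, which by the previous step is equivalent to $A$ being an $\Omega$-instanton for $\Omega = \frac{\omega^{n-2}}{(n-2)!} \wedge \chi$. I expect the only real care to be needed in the first step, namely fixing orientation and sign conventions so that $* \alpha = (*_B \alpha) \wedge \chi$ holds with no stray sign; once that factorisation through $\wedge \chi$ is in place, the remainder is the standard fibrewise computation.
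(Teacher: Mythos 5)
Your proof is correct and takes essentially the same route as the paper, whose entire argument is to cite Tian's non-foliated result and assert that it generalises to the foliated case by replacing $\frac{\omega^{n-2}}{(n-2)!}$ with $\frac{\omega^{n-2}}{(n-2)!} \wedge \chi$. Your factorisation $*\alpha = (*_B \alpha) \wedge \chi$ for basic forms, followed by the pointwise Weil/Lefschetz computation and the observation that trivial determinant forces $\gamma_A = 0$, is precisely the content of that assertion written out in full.
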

\begin{proof}
This is proven in the non-foliated setting in \cite{tian}. The result clearly generalises to the foliated case by replacing $\frac{\omega^{n-2}}{(n-2)!}$ with $\Omega = \frac{\omega^{n-2}}{(n-2)!} \wedge \chi$.
\end{proof}

\begin{remark}
Suppose that $X$ is compact and that the transverse metric is Gauduchon. If $E$ is a polystable transverse holomorphic bundle and $\det(E) = \mathcal{O}$ is trivial as a transverse holomorphic line bundle, then we claim that the associated Hermitian-Einstein connection $A$ has trivial determinant and is therefore an $\Omega$-instanton for $\Omega = \frac{\omega^{n-2}}{(n-2)!} \wedge \chi$, by Proposition \ref{prop:heinstanton}. To see this, note that the Hermitian-Einstein connection on $E$ induces a Hermitian-Einstein connection on $\det(E) = \mathcal{O}$. By Proposition \ref{prop:unique}, the induced Hermitian-Einstein connection on $\det(E)$ is unique, so must be the trivial flat connection.
\end{remark}

\begin{proposition}
Let $X$ be an oriented, taut, transverse Hermitian foliation of complex codimension $n$. Suppose that the leafwise volume form $\chi$ satisfies $d\chi = \omega \wedge \theta$ for some $\theta$. Let $\Omega = \frac{\omega^{n-2}}{(n-2)!} \wedge \chi$. Then every $\Omega$-instanton $A$ with trivial determinant is a solution of the Yang-Mills equation $d_A (*F_A) = 0$.
\end{proposition}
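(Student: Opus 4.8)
The plan is to differentiate the instanton equation and show that the anomaly created by $d\Omega\neq 0$ is annihilated by $F_A$. Applying $d_A$ to $*F_A = -\Omega\wedge F_A$, and using that $\Omega$ is scalar-valued together with the Bianchi identity $d_A F_A = 0$, I obtain $d_A(*F_A) = -d\Omega\wedge F_A$. Hence the entire proposition reduces to the pointwise identity $d\Omega\wedge F_A = 0$.

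The key structural input comes from Proposition \ref{prop:heinstanton}: a basic $\Omega$-instanton with trivial determinant is transverse Hermitian--Einstein, and the trivial-determinant hypothesis forces $tr(F_A)=0$, whence $r\gamma_A = i\Lambda\, tr(F_A) = 0$ and $\gamma_A = 0$. Thus $F_A$ is a basic \emph{primitive} $(1,1)$-form: $F_A^{0,2}=0$ and $\Lambda F_A = 0$, so in particular $\omega^{n-1}\wedge F_A = 0$, and the transverse Weil identity gives $*_B F_A = -\tfrac{1}{(n-2)!}\,\omega^{n-2}\wedge F_A$.

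Next I would expand $d\Omega$ and feed in the hypothesis $d\chi = \omega\wedge\theta$:
\[
(n-2)!\,d\Omega = d(\omega^{n-2})\wedge\chi + \omega^{n-2}\wedge d\chi = (n-2)\,\omega^{n-3}\wedge d\omega\wedge\chi + \omega^{n-1}\wedge\theta.
\]
Wedging with $F_A$, the second summand gives $\omega^{n-1}\wedge\theta\wedge F_A = \pm\,\theta\wedge(\omega^{n-1}\wedge F_A) = 0$ by primitivity, the sign being immaterial. This is exactly the step that consumes the hypothesis $d\chi = \omega\wedge\theta$: it is what converts the non-closedness of the leafwise volume form into a factor $\omega^{n-1}$ that is killed by the primitivity of $F_A$.

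The remaining term, $\omega^{n-3}\wedge d\omega\wedge\chi\wedge F_A$, is what I expect to be the main obstacle. Via the Weil identity it equals a constant multiple of $d_A(*_B F_A)\wedge\chi$, so that $d_A(*F_A)=0$ is equivalent to the \emph{transverse} Yang--Mills equation $d_A(*_B F_A)=0$, i.e. to $d(\omega^{n-2})\wedge F_A = 0$ in $\Omega^{2n-1}_B(X,End(E))$. The difficulty is that this term lives in a leafwise bidegree different from the one already treated, so no further cancellation against the $\theta$-term is available, and it is precisely here that the failure of $\omega$ to be closed genuinely enters. To attack it I would use the Lefschetz primitive decomposition $d\omega = (d\omega)_0 + \omega\wedge\beta$, with $(d\omega)_0$ a primitive $3$-form and $\beta$ a basic $1$-form, and treat the two resulting contributions $\omega^{n-3}\wedge(d\omega)_0\wedge F_A$ and $\beta\wedge(\omega^{n-2}\wedge F_A) = -(n-2)!\,\beta\wedge *_B F_A$ separately, combining the Bianchi identity, the primitivity relations $\Lambda F_A=0$ and $\omega^{n-1}\wedge F_A=0$, and the relation $d\omega\wedge\theta = -\,\omega\wedge d\theta$ obtained by differentiating the hypothesis $d\chi=\omega\wedge\theta$. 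Establishing the vanishing of this transverse term is where the real content of the proposition is concentrated, and I regard it as the crux of the argument.
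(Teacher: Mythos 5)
As written, your proposal is incomplete: you explicitly leave the term $d(\omega^{n-2})\wedge\chi\wedge F_A$ unproven, calling it ``the crux,'' so the argument does not close. But it is worth recording that everything you \emph{do} carry out is exactly the paper's proof: differentiating the instanton equation and invoking the Bianchi identity to get $d_A(*F_A)=-\,d\Omega\wedge F_A$; using Proposition \ref{prop:heinstanton} together with trivial determinant to get $\Lambda F_A=0$ (the paper routes this through $\deg(E)=0$, which tacitly needs $X$ compact, whereas your pointwise argument $tr(F_A)=0\Rightarrow r\gamma_A=i\Lambda\, tr(F_A)=0$ works verbatim and is cleaner, since the proposition does not assume compactness); and killing the $d\chi$-contribution via $\omega^{n-2}\wedge d\chi\wedge F_A=\omega^{n-1}\wedge\theta\wedge F_A=0$ by primitivity, which is the same step the paper performs using $\omega^{n-1}\wedge F_A\propto\omega^{n}\,\Lambda F_A=0$.

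The decisive point of comparison is that your ``crux'' term does not appear in the paper's proof at all: the paper computes $d\Omega\wedge F_A=\frac{\omega^{n-2}}{(n-2)!}\wedge d\chi\wedge F_A$, silently discarding $\frac{1}{(n-2)!}\,d(\omega^{n-2})\wedge\chi\wedge F_A$. That omission is justified precisely when $d(\omega^{n-2})=0$, e.g.\ when the foliation is transverse K\"ahler --- which holds in every application the paper makes of this proposition (Sasakian, where $\omega=d\eta$ is exact; co-K\"ahler, where $d\omega=0$) --- but it is not implied by the stated ``transverse Hermitian'' hypothesis. Your instinct that this term is where the real difficulty lies is therefore correct, and moreover it cannot be eliminated by the algebra you sketch: for $n\ge 3$, primitivity of a $(1,1)$-form $F$ imposes no pointwise constraint on $\omega^{n-3}\wedge d\omega\wedge F$ (for $n=3$, take $F$ with a $dz_1\wedge d\bar z_2$ component and $d\omega$ with a $dz_2\wedge d\bar z_1\wedge dz_3$ component), and neither the Bianchi identity nor the relation $d\omega\wedge\theta=-\,\omega\wedge d\theta$ supplies the missing cancellation. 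So the honest verdict is: your first two paragraphs already reproduce the paper's entire argument; the step you could not complete is not completed in the paper either, but is implicitly assumed away, and should be read as an additional hypothesis ($d\omega=0$, or at least $d(\omega^{n-2})\wedge\chi\wedge F_A=0$) under which your argument, like the paper's, closes immediately. For $n=2$ the term is vacuous and your proof is already complete as it stands.
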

\begin{proof}
Differentiating the instanton equation $*F_A = -\Omega \wedge F_A$ and using the Bianchi identity, we find
\begin{equation*}
\begin{aligned}
d_A(*F_A) = - d\Omega \wedge F_A = - \frac{\omega^{n-2}}{(n-2)!} \wedge d\chi \wedge F_A &= -\frac{\omega^{n-1}}{(n-2)!} \wedge \theta \wedge F_A \\
&= - \theta \wedge \frac{\omega^n}{(n-1)!} (\Lambda F_A).
\end{aligned}
\end{equation*}
Since $A$ is an $\Omega$-instanton we have that $A$ is also a Hermitian-Einstein connection, by Proposition \ref{prop:heinstanton}. But $\det(E)$ is trivial, so $\deg(E) = 0$ and the Hermitian-Einstein equations become $\Lambda F_A = 0$. Therefore $d_A(*F_A) = 0$.
\end{proof}

\begin{remark}
The above proposition holds for instance when $X$ is Sasakian, as $\chi = \eta$ and $d\eta = \omega$. Similarly the proposition holds if $X$ is co-K\"ahler, as $\chi = \eta$ and $d\eta =0$.
\end{remark}

Next, we recall the notion of {\em contact instantons} introduced in \cite{kaza} in the context of $5$-dimensional supersymmetric Yang-Mills theory and are a $5$-dimensional analogue of self-dual/anti-self-dual instantons on $4$-manifolds. We studied the moduli space of contact instantons in \cite{bh}. Suppose that $X$ is a $K$-contact $5$-manifold. A connection $A$ on $X$ is called a self-dual contact instanton if it satisfies
\[
*F_A = \eta \wedge F_A.
\]
Similarly $A$ is called an anti-self-dual contact instanton if
\[
*F_A = -\eta \wedge F_A.
\]
Clearly anti-self-dual/self-dual contact instantons are $\Omega$-instantons for $\Omega = \pm \eta$. Of particular interest is the case when $X$ is a Sasakian $5$-manifold. Then by Proposition \ref{prop:heinstanton} we have:
\begin{corollary}\label{cor:continst}
Anti-self-dual $SU(r)$ contact instantons on a $5$-dimensional Sasakian manifold $X$ are precisely the $SU(r)$ $\Omega$-instantons for $\Omega = \eta$, therefore they correspond to rank $r$ transverse Hermitian-Einstein connections with trivial determinant. If $X$ is compact, then by the transverse Hitchin-Kobayashi correspondence, anti-self-dual $SU(r)$ contact instantons on $X$ correspond to rank $r$ polystable transverse holomorphic bundles with trivial determinant.
\end{corollary}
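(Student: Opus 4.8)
The plan is to deduce the Corollary by specialising Proposition~\ref{prop:heinstanton} to a Sasakian $5$-manifold (so $\dim X = 2n+1 = 5$, i.e.\ $n=2$) and then invoking the transverse Hitchin-Kobayashi correspondence of Corollary~\ref{cor:hksasaki}. First I would observe that for $n=2$ the form appearing in Proposition~\ref{prop:heinstanton} degenerates: $\Omega = \frac{\omega^{n-2}}{(n-2)!}\wedge\chi = \frac{\omega^0}{0!}\wedge\chi = \chi$, and since $X$ is Sasakian we have $\chi = \eta$ (and $\omega = d\eta$). Hence the anti-self-dual contact instanton equation $*F_A = -\eta\wedge F_A$ is, literally, the $\Omega$-instanton equation for $\Omega = \eta$, which establishes the first equivalence in the statement. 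An $SU(r)$ reduction forces the curvature to be trace-free and provides a covariantly constant volume form, so such a connection has trivial determinant in the sense of Section~\ref{sec:instantons}.

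The one genuinely non-formal point, and the main obstacle, is that Proposition~\ref{prop:heinstanton} is phrased for \emph{basic} connections, whereas an anti-self-dual contact instanton is a priori just a unitary connection on $E$. I would show that the equation itself forces basicness. Decomposing $F_A = F^H + \eta\wedge\beta$ with $F^H \in \Omega^2(H)$ horizontal and $\beta = i_\xi F_A$ a horizontal $1$-form, one has $\eta\wedge F_A = \eta\wedge F^H$, while writing the Hodge star of the $5$-manifold in terms of the transverse Hodge star $*_H$ on the $4$-dimensional $H$ gives $*F_A = (*_H F^H)\wedge\eta \pm *_H\beta$, where $(*_H F^H)\wedge\eta \in \eta\wedge\Omega^2(H)$ and $*_H\beta \in \Omega^3(H)$. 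Matching the purely horizontal $3$-form components of $*F_A = -\eta\wedge F^H$ yields $*_H\beta = 0$, hence $i_\xi F_A = \beta = 0$; matching the $\eta$-components recovers the transverse anti-self-duality $*_H F^H = -F^H$. By the discussion in Section~\ref{sec:tg}, a connection with $i_\xi F_A = 0$ makes $E$ a foliated bundle for which $A$ is basic, so Proposition~\ref{prop:heinstanton} applies and identifies anti-self-dual $SU(r)$ contact instantons with rank $r$ transverse Hermitian-Einstein connections of trivial determinant. The converse direction is immediate: a transverse Hermitian-Einstein connection with trivial determinant is basic by definition, and by Proposition~\ref{prop:heinstanton} it is an $\eta$-instanton, i.e.\ an anti-self-dual contact instanton, with $SU(r)$ structure group coming from unitarity together with the trivial determinant.

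Finally, for compact $X$ I would apply the transverse Hitchin-Kobayashi correspondence. Since a Sasakian manifold is transverse K\"ahler ($d\omega = 0$), it is in particular transverse Gauduchon, so Corollary~\ref{cor:hksasaki} is available and identifies transverse Hermitian-Einstein bundles with polystable transverse holomorphic bundles. It remains only to track the trivial-determinant condition across this correspondence. In one direction, by Theorem~\ref{thm:heispolystable} the transverse holomorphic bundle underlying an $SU(r)$ instanton is polystable, and the trivial determinant of $A$ means $\det(E)$ carries a covariantly constant section, so $\det(E) \cong \mathcal{O}$ is trivial as a transverse holomorphic line bundle. In the other direction, given a polystable $E$ with $\det(E) \cong \mathcal{O}$, the transverse Hermitian-Einstein metric supplied by Corollary~\ref{cor:hksasaki} induces a Hermitian-Einstein connection on $\det(E) = \mathcal{O}$, which by the uniqueness statement of Proposition~\ref{prop:unique} (as already noted in the Remark following Proposition~\ref{prop:heinstanton}) must be the flat trivial connection; hence the connection on $E$ has trivial determinant and is an anti-self-dual $SU(r)$ contact instanton. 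This establishes the claimed correspondence.
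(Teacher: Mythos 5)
Your proposal is correct and follows essentially the same route as the paper: the paper obtains the corollary by specialising Proposition~\ref{prop:heinstanton} to a Sasakian $5$-manifold (where $n=2$, $\chi=\eta$, $\omega=d\eta$, so $\Omega=\eta$) and then invoking Corollary~\ref{cor:hksasaki} together with the remark following Proposition~\ref{prop:heinstanton} to track the trivial-determinant condition, exactly as you do. The only difference is that you explicitly verify that the contact instanton equation forces $i_\xi F_A=0$ and hence basicness of the connection --- a point the paper treats as already known (it is stated in the introduction and attributed to \cite{bh}) --- which is a worthwhile clarification rather than a change of method.
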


A $7$-dimensional analogue of contact instantons was considered in \cite{prz}, which unsurprisingly arise from the study $7$-dimensional supersymmetric Yang-Mills theory. Let $X$ be a $7$-dimensional $K$-contact manifold. We say that a connection $A$ on $X$ is a self-dual/anti-self-dual {\em higher contact instanton} on $A$ if:
\[
*F_A = \pm \eta \wedge d\eta \wedge F_A.
\]
Naturally, one can generalise this to the case that $X$ is a $K$-contact manifold of dimension $2n+1$ with $n \ge 2$ and define self-dual/anti-self-dual higher contact instantons to be solutions of
\[
*F_A = \pm \eta \wedge \frac{ \; \; \; \; (d\eta)^{n-2}}{(n-2)!} \wedge F_A.
\]
In the case that $X$ is Sasakian, the analogue of Corollary \ref{cor:continst} holds:
\begin{corollary}
Anti-self-dual $SU(r)$ higher contact instantons on a $2n+1$-dimensional Sasakian manifold $X$ (with $n \ge 2)$ are precisely the $SU(r)$ $\Omega$-instantons for $\Omega = \eta \wedge \frac{ \; \; \; \; (d\eta)^{n-2}}{(n-2)!}$, therefore they correspond to rank $r$ transverse Hermitian-Einstein connections with trivial determinant. If $X$ is compact, then by the transverse Hitchin-Kobayashi correspondence, anti-self-dual $SU(r)$ higher contact instantons on $X$ correspond to rank $r$ polystable transverse holomorphic bundles with trivial determinant.
\end{corollary}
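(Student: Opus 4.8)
The plan is to assemble the statement from results already established, since for Sasakian $X$ all the relevant geometric quantities specialise in the expected way. First I would observe that on a Sasakian manifold of dimension $2n+1$ the leafwise volume form is $\chi = \eta$ and the transverse Hermitian form is $\omega = d\eta$, so the form $\Omega = \frac{\omega^{n-2}}{(n-2)!} \wedge \chi$ appearing in Proposition \ref{prop:heinstanton} becomes $\Omega = \frac{(d\eta)^{n-2}}{(n-2)!} \wedge \eta$. Since $\omega^{n-2}$ has even degree $2n-4$, this equals $\eta \wedge \frac{(d\eta)^{n-2}}{(n-2)!}$, which is precisely the form defining the anti-self-dual higher contact instanton equation. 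Thus the first assertion, that anti-self-dual $SU(r)$ higher contact instantons are exactly the $SU(r)$ $\Omega$-instantons for this $\Omega$, is immediate from comparing the two defining equations.

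Next I would apply Proposition \ref{prop:heinstanton}. An $SU(r)$ connection has curvature valued in $\mathfrak{su}(r)$, hence trace-free, so it has trivial determinant in the sense defined above. Since a compact Sasakian manifold is oriented, taut and transverse Hermitian of complex codimension $n$, the proposition applies and gives that such a connection is an $\Omega$-instanton if and only if it is transverse Hermitian-Einstein. This establishes the correspondence between anti-self-dual $SU(r)$ higher contact instantons and rank $r$ transverse Hermitian-Einstein connections with trivial determinant.

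Finally, for compact $X$ I would invoke the transverse Hitchin-Kobayashi correspondence for Sasakian manifolds, Corollary \ref{cor:hksasaki}: a transverse holomorphic bundle admits a transverse Hermitian-Einstein metric if and only if it is polystable, the Hermitian-Einstein connection being the Chern connection of that metric. The only point requiring attention is matching the trivial determinant condition on both sides. A Hermitian-Einstein connection has trivial determinant precisely when $\det(E)$ is trivial as a transverse holomorphic line bundle; in that case, applying uniqueness of the Hermitian-Einstein metric (Proposition \ref{prop:unique}) to the line bundle $\det(E) = \mathcal{O}$, the induced connection on $\det(E)$ must be the trivial flat connection. Hence polystable bundles with trivial determinant correspond exactly to Hermitian-Einstein connections with trivial determinant, completing the correspondence. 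The argument is essentially bookkeeping, and there is no genuine obstacle beyond this verification of the determinant condition, which was already noted in the remark following Proposition \ref{prop:heinstanton}.
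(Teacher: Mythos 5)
Your proposal is correct and takes essentially the same route as the paper, which treats this corollary as immediate from Proposition \ref{prop:heinstanton} specialised via $\chi = \eta$, $\omega = d\eta$, the remark following that proposition (handling the trivial determinant matching via Proposition \ref{prop:unique}), and Corollary \ref{cor:hksasaki}. One minor point of precision: for an $SU(r)$ connection the trivial determinant condition follows because the $SU(r)$-structure provides a covariantly constant volume form, not merely because the curvature is trace-free --- trace-freeness alone only makes the induced connection on $\det(E)$ flat, not trivial.
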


\bibliographystyle{amsplain}

\end{document}